\newtheorem{theorem}{Theorem}[section]
\newtheorem{remark}[theorem]{Remark}
\newtheorem{proposition}[theorem]{Proposition}
\newcounter{lettercounter}\setcounter{lettercounter}{0}
\title{A spectral element solution of the Poisson equation with shifted boundary polynomial corrections: influence of the surrogate to true boundary mapping and an asymptotically preserving Robin formulation}
\author{ \href{https://orcid.org/0000-0001-6698-2623}{\includegraphics[scale=0.06]{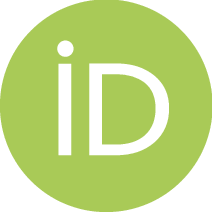}\hspace{1mm}Jens Visbech} \\
	Department of Applied Mathematics and Computer Science\\
	Technical University of Denmark\\
	Kongens Lyngby, 2800, Denmark \\
	\texttt{jvis@dtu.dk} \\
\And
	\href{https://orcid.org/0000-0001-8626-1575}{\includegraphics[scale=0.06]{Figures/orcid.eps}\hspace{1mm}Allan Peter Engsig-Karup} \\
	Department of Applied Mathematics and Computer Science\\
	Technical University of Denmark\\
	Kongens Lyngby, 2800, Denmark \\
	\texttt{apek@dtu.dk} \\
\And
	\href{https://orcid.org/0000-0002-1679-7339}{\includegraphics[scale=0.06]{Figures/orcid.eps}\hspace{1mm}Mario Ricchiuto} \\
    INRIA, Univ. Bordeaux, CNRS, Bordeaux INP, IMB, UMR 5251, \\200 Avenue de la Vieille Tour, 33405 Talence cedex, France\\
	\texttt{mario.ricchiuto@inria.fr} \\
}
\begin{document}



\maketitle

\begin{abstract}
	We present a new high-order accurate spectral element solution to the two-dimensional scalar Poisson equation subject to a general Robin boundary condition. The solution is based on a simplified version of the shifted boundary method employing a continuous arbitrary order $hp$-Galerkin spectral element method as the numerical discretization procedure. The simplification relies on a polynomial correction to avoid explicitly evaluating high-order partial derivatives from the Taylor series expansion, which traditionally have been used within the shifted boundary method. In this setting, we apply an extrapolation and novel interpolation approach to project the basis functions from the \textit{true} domain onto the approximate \textit{surrogate} domain. The resulting solution provides a method that naturally incorporates curved geometrical features of the domain, overcomes complex and cumbersome mesh generation, and avoids problems with small-cut-cells. Dirichlet, Neumann, and general Robin boundary conditions are enforced weakly through: i) a generalized Nitsche's method and ii) a generalized Aubin's method. For this, a consistent asymptotic preserving formulation of the embedded Robin formulations is presented.

We present several numerical experiments and analysis of the algorithmic properties of the different weak formulations. With this, we include convergence studies under polynomial, $p$, increase of the basis functions, mesh, $h$, refinement, and matrix conditioning to highlight the spectral and algebraic convergence features, respectively. This is done to assess the influence of errors across variational formulations, polynomial order, mesh size, and mappings between the true and surrogate boundaries.
\end{abstract}

\keywords{Spectral element method \and shifted boundary method \and high-order numerical method \and embedded methods \and Poisson problem \and elliptic problem \and Dirichlet, Neumann, and Robin boundary conditions.}

 \section{Introduction}\label{sec:introduction} 

The need for accurate representation of solutions to partial differential equations (PDEs) -- that take complex geometry into account -- appears frequently in real-world science and engineering applications. Some applications largely involve smooth fields, for example, free surface dispersive wave propagation, electro-magnetics, and electrostatics, among others. In these cases, very precise predictions can be obtained using, e.g., high-order finite elements on conformal meshes. However, the computational time spent meshing is often relatively high. The presence of moving boundaries, e.g., as in free surface potential flow wave models, further complicates this challenging task. Unfitted techniques -- such as immersed and embedded methods -- allow to work with meshes agnostic of the domain boundaries, hereby lighting the meshing task and offering greater geometrical flexibility. Another important aspect is the type of boundary condition being imposed. Many applications, such as some of those mentioned above, involve a mix of Dirichlet, Neumann, as well as Robin conditions. The latter plays an important role in many problems. To mention a few:  relatively simple elliptic and parabolic PDEs, as heat transfer and fluid flow \cite{Li08,LiWang20,ACHDOU1998187}, acoustic wave propagation in different media \cite{AntoineBarucq05,DiazPeron16,PIND2021107596}, electromagnetic wave scattering as well as electrostatics \cite{AH98,BB02,Haddar05,Aslan11,PERRUSSEL201348}. An interesting aspect is that, in many cases, Robin conditions allow us to model the presence of an unresolved thin boundary layer and present themselves as a small perturbation of a Dirichlet (or Neumann) condition. This hides the asymptotic behavior of the medium being modeled within such layer, e.g., see \cite{AV18} and references therein for a review. In more complex settings, e.g., as fluid-structure interaction, the Robin condition is also known to play a major role \cite{FSI1,FSI2}. An additional very appealing use of Robin conditions is their potential for managing the variability of the boundary conditions in space and time -- with a single implementation -- as shown in \cite{refId0}.

In this paper, we are interested in studying high-order unfitted methods, and in particular Robin formulations, that can provide high-order accuracy on unfitted meshes. This can be used to switch from one boundary condition to another with a single implementation, which is consistent with asymptotic approximations possibly embedded in the PDE. The focus of the paper is on the Poisson equation with a reaction term as this is the governing spatial operator in many of the physical applications mentioned above.

\underline{\textit{High-order $hp$-spectral element methods.}} Many problems are well known to be more efficiently approximated by high-order methods. In this paper, we target a very high-order method with convergence rates much higher than second-order. The known gains obtainable with such methods are associated with faster convergence if the solution has sufficient smoothness or appropriate $hp$-adaptive strategies are adopted \cite{Kreiss1972,hesthaven_gottlieb_gottlieb_2007,Xu2018}.  

Among high-order numerical discretizations, the spectral element method (SEM) \cite{Xu2018}, or $hp$-FEM \cite{Bakuska1990}, are classical choices. The SEM is a combination of two classes of methods: a high-order accurate spectral collocation method and a finite element method that is designed to handle geometry in a flexible way. It was introduced by Patera in the 1980s for solving laminar flows \cite{Patera1984}. If the solution to the mathematical problem stated in terms of PDEs is sufficiently smooth, the errors will decay at an exponential/spectral rate when increasing the approximation degree of the polynomial basis, $p$. Also, compared to spectral collocation methods, the resulting linear system of equations has a high degree of sparsity due to the domain decomposition of the domain into elements. Ultimately, this provides efficient linear scaling opportunities when dealing with complex geometry, e.g., through geometric $p$-multigrid methods \cite{ronquist1987,EngsigKarup2021}. The SEM also allows for the use of simplicial meshes, which have the benefit of great geometrical flexibility where necessary. In this work, SEM approximations up to an order of $11$ will be considered since this is assumed to cover most practical needs.

\underline{\textit{High order unfitted approximation and shifted boundary approach.}} As already mentioned, unfitted methods avoid cumbersome mesh generation and mesh management issues in applications involving complex geometries or moving boundaries.
There exist many different approaches to constructing such methods. A widely used approach is based on volume penalization, or forcing, which originates from the work of Peskin in the 1970s \cite{Peskin1972}. The method is quite robust and has been applied in many applications (see the recent review \cite{verzicco23}) and to impose several types of boundary conditions, including Neumann and Robin \cite{THIRUMALAISAMY2022110726,RAMIERE2007766,BENSIALI2015134}. However, its accuracy is often limited to first or second-order, which requires advanced use of adaptive techniques to offset the associated error  \cite{Mittal2005,verzicco23}. The finite element setting has shown a greater potential to allow for the construction of higher accuracy unfitted methods. Starting with the early works on GFEM, XFEM, and cut finite elements \cite{Strouboulis2000,Moes1999,Hansbo2002}, many developments have allowed us to tackle some of the underlying initial limitations of methods based on mesh intersections. The latter may lead to geometrically ill-posed situations, which may translate into bad conditioning of the resulting problem when elliptic equations are considered. Several approaches have emerged over the years involving either the use of appropriate stabilization operators, cell aggregation, or the use of level sets to redefine the finite element spaces \cite{cutfem15,BADIA2018533,LEHRENFELD2016716,Oyarzua19,XIAO2020112964,Lozinski2019,phi23}. Many of these approaches allow high-degree polynomial approximations as well.

In this paper, we focus on a numerical discretization strategy for handling domain boundary conditions introduced relatively recently by G. Scovazzi and his collaborators: the shifted boundary method (SBM). The SBM was initially proposed by Main \& Scovazzi in \cite{Main2018a,Main2018b} for solving boundary value problems. The main idea behind the SBM is first to relocate -- or \textit{shift} -- the domain boundary from the \textit{true} to an approximate -- or \textit{surrogate} -- boundary. The latter is composed of appropriately defined mesh faces. The subset of mesh elements within the surrogate boundary constitutes the surrogate domain on which the discrete solution is computed. To ensure that the boundary conditions are imposed with the desired accuracy, the second idea in the SBM is to modify the problem data to account for the geometrical shift of the boundary. This step is classically used as a shift operator -- from the surrogate to the true boundary -- provided by a truncated Taylor series development. This approach provides a simple, efficient, and robust method that overcomes complex meshing tasks, naturally incorporates the ability to deal with curved geometries, and avoids potential problems with small cut cells. The method has been successfully extended to hyperbolic and parabolic equations \cite{Song2018,Ciallella2022,atallah2023highorder,LI2024116461}, to the management of fixed and moving interfaces \cite{Li2020,Colomes2021,Carlier2022,atallah2023highorder}, to solid mechanics \cite{Atallah2021a,Li2021}, and to higher orders \cite{Nouveau2019,Atallah2022,Collins2023,Ciallella2022}.

\underline{\textit{Accuracy and conditioning}}. 
Two key elements in most of the unfitted finite element approaches mentioned are: i) the way in which the boundary conditions are enforced and ii) the polynomial approximation used to achieve such enforcement. In the context of the SBM, the data is typically extrapolated from the surrogate boundary to the true boundary. For the original formulation, this is done in terms of truncated Taylor series, which -- as shown in \cite{Ciallella2022} -- is exactly equivalent to the evaluation of the cell polynomials onto the true boundary. This leaves several options, as the true boundary may fall within the surrogate domain or outside. The latter approach is usually the one considered in most SBM publications, with the exception of the recent work considering a mix of the two combined with adaptive mesh refinement \cite{yang2023optimal}. When considering polynomials of high degree, this aspect may have a strong impact on the accuracy and stability of SBM. Extrapolation of such polynomials is known to converge less optimal and lead to less well-conditioned algebraic equations \cite{Badia2022}. 

Finally, the form of the terms included in the variational statement may have an impact on the accuracy and conditioning of the problem. In many of the works mentioned on high-order unfitted finite elements, including the SBM, the boundary condition is enforced weakly. For Dirichlet conditions, this is often achieved using Nitsche's variation formulation \cite{Nitsche1971}. To impose more general conditions, generalized variants of Nitsche's method could be used, as those proposed in \cite{Benzaken2022,Juntunen2009}. To our knowledge, the only example of this in the context of unfitted finite elements is in \cite{WINTER2018220}. These formulations involve several additional terms, including adjoint consistent corrections and penalization terms. Another related and classical approach to enforce Dirichlet conditions weakly is the one proposed by Aubin, also known as a penalty method \cite{Aubin1970}. This approach is somewhat simpler compared to  Nitsche's method. However, it is not adjoint-consistent, which may lead to sub-optimal convergence, e.g., see \cite{Douglas2002} and references therein.

\subsection{Paper aim and contribution}

In this paper, we propose and evaluate an SEM-SBM approach with general weak Robin boundary conditions. The first objective is -- besides providing a high-order approximation of genuine Robin conditions -- to obtain formulations allowing the consideration of spatially varying conditions going from Dirichlet to Neumann. Secondly, for problems involving small perturbations of one of the above limits, we would like the discrete approximation to be consistent with the asymptotics underlying the continuous PDE. These objectives will be achieved by combining the SBM with appropriate generalized weak forms \cite{Juntunen2009}. Finally, we perform a thorough study of the impact of the polynomial approximation on the surrogate boundary and of the variational form used on the numerical convergence and condition number. To this end, we study three variants of the SBM involving different maps from the mesh to the surrogate boundary and generalized weak forms of both the Nitsche and Aubin types.

\subsection{Structure of paper}

The remainder of this paper is outlined as follows: In Section \ref{sec:mathematical_problem}, the mathematical problem is stated for the two-dimensional scalar Poisson problem in strong form with Dirichlet, Neumann, and Robin boundary data. Then, in Section \ref{sec:prelimiaries}, we outline various preliminaries for the proposed model, including the computational domains, mappings, finite element tessellation, and more. In Section \ref{sec:numerical_method_and_discretization}, the numerical method -- in terms of the SEM -- is outlined on conformal meshes. This section is followed by Section \ref{sec:poisson_sbm}, where the polynomial corrected shifted boundary is introduced in the Dirichlet and Neumann context. Hereafter, in Section \ref{sec:cons_robin}, the case of Robin conditions are handled. In Section \ref{sec:numerical_results}, we present multiple numerical experiments, including convergence studies and numerical analysis of matrix conditioning. Lastly, in Section \ref{sec:conclusion}, a conclusion is formed to summarize the main findings and achievements of the paper.
 \section{Mathematical problem}\label{sec:mathematical_problem}

First, we define the mathematical problem by considering a two-dimensional ($d=2$) planar domain in a Euclidean space, $\Omega \subset {\rm I\!R}^{d}$, with a potentially curved Lipschitz boundary, $\Gamma = \partial \Omega$. On the boundary, we define the outward-facing normal and tangent unit vectors by $\boldsymbol{n} = \boldsymbol{n}(x,y): \Gamma \mapsto {\rm I\!R}^{d}$ and $\boldsymbol{t} = \boldsymbol{t}(x,y): \Gamma \mapsto {\rm I\!R}^{d}$, respectively. See Figure \ref{fig:computational_domains} for a visualization. In $\Omega$, we consider the scalar solution, $u = u(x,y): \Omega \mapsto {\rm I\!R}$, of the following elliptic problem with different non-homogeneous boundary conditions, where $u \in C^2(\Omega)$, such that
\begin{equation}\label{eq:Poisson}
    \begin{split}
        -\boldsymbol{\nabla}^2 u  +\alpha u                              &= f, \quad \text{in} \quad \Omega, \\
        u                                                   &= u_D, \quad \text{on} \quad \Gamma_D, \\
        \boldsymbol{\nabla} u \cdot \boldsymbol{n}                       &= q_N, \quad \text{on} \quad \Gamma_N,\\
        u + \varepsilon ( \boldsymbol{\nabla} u \cdot \boldsymbol{n} )   &=  u_{RD} + \varepsilon q_{RN},  \quad \text{on} \quad \Gamma_R.
        \end{split}
\end{equation}
Here $\Gamma$ is partitioned into boundaries for Dirichlet ($\Gamma_D$), Neumann ($\Gamma_N$), and Robin data ($\Gamma_R$) such that $\Gamma_D \cup \Gamma_N \cup \Gamma_R = \Gamma$. Furthermore, $\alpha \geq 0$ governs the reaction term to ensure a well-posed pure Neumann problem when $\Gamma_D = \Gamma_R = \emptyset$. Also, $f = f(x,y): \Omega \mapsto {\rm I\!R}$ is a given forcing function. The Dirichlet and Neumann data are denoted by $u_D = u_D(x,y): \Gamma_D \mapsto {\rm I\!R}$ and $q_N = q_N(x,y): \Gamma_N \mapsto {\rm I\!R}$, respectively. Moreover, the Dirichlet and Neumann contributions to the Robin boundary condition are denoted $u_{RD} = u_{RD}(x,y): \Gamma_R \mapsto {\rm I\!R}$ and $q_{RN} = q_{RN}(x,y): \Gamma_R \mapsto {\rm I\!R}$, respectively. Note that, in practice, only one of the two is known. Considering both terms, however, is interesting in view of the study of the asymptotic limits.

\subsection{Formal asymptotics of the full Robin problem}
    
Let us focus for a moment on the Robin problem obtained when $\Gamma_R = \Gamma$ as
\begin{equation}\label{eq:full_robin}
\begin{split}
        -\boldsymbol{\nabla}^2 u  +\alpha u                             &= f, \quad \text{in} \quad \Omega, \\
        u + \varepsilon ( \boldsymbol{\nabla} u \cdot \boldsymbol{n} )  &=  u_{RD} + \varepsilon q_{RN},  \quad \text{on} \quad \Gamma.  
        \end{split}
\end{equation}
Interestingly, the Robin condition can be used to mimic Dirichlet and Neumann conditions as a \textit{all-in-one} boundary formulation by choosing either small or large values of $\varepsilon$. This fact has already been exploited in \cite{refId0} to impose variable conditions on conformal meshes. Here, we try to go a little further. Indeed, as recalled in the introduction, many applications involve small perturbations of a Dirichlet or Neumann condition. The cases are obtained when $\varepsilon\ll1$ or $\varepsilon^{-1}\ll1$ -- but not zero -- in \eqref{eq:full_robin}, respectively. Using classical formal asymptotics, we can show the following. 

\begin{proposition}[Robin-Dirichlet asymptotics] For a smooth boundary $\Gamma$, and $u_{RD}$ and $q_{RN}$ smooth enough functions, the full Robin problem in \eqref{eq:full_robin} admits the following formal asymptotics for $\varepsilon\rightarrow 0$ as an series expansion as
\begin{equation}\label{eq:robin_D_expansion}
 u = u_0 + \varepsilon u_1 +\sum_{m > 1}\varepsilon^m u_m,
 \addtocounter{equation}{1} \addtocounter{lettercounter}{1}\tag{\theequation \alph{lettercounter}}
\end{equation}
where the leading order term, $u_0 = u_0(x,y): \Omega \mapsto {\rm I\!R}$, verifies the elliptic equation given Dirichlet boundary data as
\begin{equation}\label{eq:Robin_D_O0}
        -\boldsymbol{\nabla}^2 u_0  + \alpha u_0  = f, \quad \text{in} \quad \Omega \quad \text{with} \quad
        u_0  = u_{D,0}:= u_{RD} \quad \text{on} \quad \Gamma,  \addtocounter{equation}{0} \addtocounter{lettercounter}{1}\tag{\theequation \alph{lettercounter}}
\end{equation}
while the Dirichlet problems give the first and higher-order corrections $u_m = u_m(x,y): \Omega \mapsto {\rm I\!R}$ for $\forall m\geq1$ as
\begin{equation}\label{eq:Robin_D_O1}
        -\boldsymbol{\nabla}^2 u_1  + \alpha u_1  = 0, \quad \text{in} \quad \Omega \quad \text{with} \quad
        u_1 =  u_{D,1}:= q_{RN}-\partial_n u_0 \quad \text{on} \quad \Gamma, 
        \addtocounter{equation}{0} \addtocounter{lettercounter}{1}\tag{\theequation \alph{lettercounter}}
\end{equation}
and $\forall m > 1$
\addtocounter{equation}{0}
\begin{equation}\label{eq:Robin_D_Om}
        -\boldsymbol{\nabla}^2 u_m + \alpha u_m = 0, \quad \text{in} \quad \Omega \quad \text{with} \quad
        u_m = u_{D,m}:= -\partial_n u_{m-1} \quad \text{on} \quad \Gamma,
         \addtocounter{equation}{0} \addtocounter{lettercounter}{1}\tag{\theequation \alph{lettercounter}}
\end{equation}
where $\partial_n$ denotes the derivative with respect to the normal direction of the boundary.
\end{proposition}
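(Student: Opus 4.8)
The plan is to proceed by the classical method of formal asymptotics: insert the power-series ansatz \eqref{eq:robin_D_expansion} into both the interior equation and the boundary condition of \eqref{eq:full_robin}, and then match coefficients of equal powers of $\varepsilon$. Since the differential operator $-\boldsymbol{\nabla}^2 + \alpha$ and the forcing $f$ are independent of $\varepsilon$, this reduces to purely algebraic bookkeeping once the substitution is made.

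First I would substitute $u = \sum_{m\geq 0}\varepsilon^m u_m$ into the PDE. Collecting the coefficient of $\varepsilon^0$ immediately yields the inhomogeneous elliptic equation in \eqref{eq:Robin_D_O0}, while every coefficient of $\varepsilon^m$ with $m\geq 1$ gives the homogeneous interior equation $-\boldsymbol{\nabla}^2 u_m + \alpha u_m = 0$ appearing in \eqref{eq:Robin_D_O1} and \eqref{eq:Robin_D_Om}. Next I would substitute the same ansatz into the boundary relation $u + \varepsilon\,(\boldsymbol{\nabla} u \cdot \boldsymbol{n}) = u_{RD} + \varepsilon q_{RN}$ on $\Gamma$. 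Writing $\partial_n u = \sum_{m\geq0}\varepsilon^m \partial_n u_m$, the factor of $\varepsilon$ multiplying the normal-derivative term shifts each contribution up by one power: the coefficient of $\varepsilon^0$ gives $u_0 = u_{RD}$, the coefficient of $\varepsilon^1$ gives $u_1 + \partial_n u_0 = q_{RN}$, and for $m>1$ the coefficient of $\varepsilon^m$ gives $u_m + \partial_n u_{m-1} = 0$. Rearranging produces exactly the Dirichlet data $u_{D,0}$, $u_{D,1}$, and $u_{D,m}$ stated in the proposition, so each order is identified with a well-defined Dirichlet problem.

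The main obstacle is a matter of justification rather than computation: one must argue that the recursion is meaningful, i.e.\ that each $u_{m-1}$ is regular enough for its normal-derivative trace $\partial_n u_{m-1}$ on $\Gamma$ to serve as admissible Dirichlet data for the problem at the next order. Here I would invoke the smoothness hypotheses on $\Gamma$, $u_{RD}$, and $q_{RN}$ together with elliptic regularity: for $\alpha \geq 0$ the leading Dirichlet problem \eqref{eq:Robin_D_O0} is well-posed and, by bootstrapping, sufficiently smooth, so that $\partial_n u_0$ exists as a trace and defines $u_{D,1}$; induction then propagates the required regularity through \eqref{eq:Robin_D_Om} for all $m$. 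Since the statement asserts only a \emph{formal} expansion, I would not attempt to establish convergence of the series or to derive remainder estimates; it suffices to note that the matching determines each coefficient $u_m$ uniquely via the stated recursion.
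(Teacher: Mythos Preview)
Your proposal is correct and follows exactly the approach the paper indicates: the paper itself does not give a detailed proof but simply states that the result follows from ``classical matched asymptotic expansions in $\varepsilon$'' and omits the details. Your substitution-and-matching argument is precisely that classical procedure, and in fact supplies more detail than the paper does.
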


\begin{proposition}[Robin-Neumann asymptotic] For a smooth boundary $\Gamma$, and $u_{RD}$ and $q_{RN}$ smooth enough functions, the full Robin problem in \eqref{eq:full_robin} admits the following formal asymptotics for $\varepsilon^{-1}\rightarrow 0$
\begin{equation}\label{eq:robin_N_expansion}
 u = u_0 + \varepsilon^{-1} u_1 +\sum_{m >1}\varepsilon^{-m} u_m,  \addtocounter{equation}{1} \setcounter{lettercounter}{1}\tag{\theequation \alph{lettercounter}}
\end{equation}
where the leading order term $u_0 = u_0(x,y): \Omega \mapsto {\rm I\!R}$ verifies the elliptic equation given Neumann boundary data as
\begin{equation}\label{eq:Robin_N_O0}
        -\boldsymbol{\nabla}^2 u_0        + \alpha u_0                             = f, \quad \text{in} \quad \Omega\quad\text{with}\quad
        \partial_nu_0  = q_{N,0}:= q_{RN} \quad \text{on} \quad \Gamma,  \addtocounter{equation}{0} \addtocounter{lettercounter}{1}\tag{\theequation \alph{lettercounter}}
\end{equation}
while the Neumann problems give the first and higher-order corrections $u_m = u_m(x,y): \Omega \mapsto {\rm I\!R}$ for $\forall m\geq1$ as
\begin{equation}\label{eq:Robin_N_O1}
        -\boldsymbol{\nabla}^2 u_1     +\alpha u_1                                = 0, \quad \text{in} \quad \Omega\quad\text{with}\quad
        \partial_nu_1 =  q_{N,1} = u_{N,1} := u_{RD} - u_0 \quad \text{on} \quad \Gamma,    \addtocounter{equation}{0} \addtocounter{lettercounter}{1}\tag{\theequation \alph{lettercounter}}
\end{equation}
and $\forall m > 1$
\begin{equation}\label{eq:Robin_N_Om}
        -\boldsymbol{\nabla}^2 u_m            +\alpha u_m                     = 0, \quad \text{in} \quad \Omega\quad\text{with}\quad
        \partial_nu_m =  q_{N,m} = u_{N,m} := - u_{m-1} \quad \text{on} \quad \Gamma.  \addtocounter{equation}{0} \addtocounter{lettercounter}{1}\tag{\theequation \alph{lettercounter}}
\end{equation}
\end{proposition}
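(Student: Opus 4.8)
The plan is to mirror the Robin--Dirichlet argument, substituting the ansatz \eqref{eq:robin_N_expansion} into \eqref{eq:full_robin} and matching like powers of the small parameter. Since here the small quantity is $\varepsilon^{-1}$, I would first set $\delta := \varepsilon^{-1}$ and divide the Robin condition through by $\varepsilon$, rewriting the boundary relation as $\delta\, u + \partial_n u = \delta\, u_{RD} + q_{RN}$ on $\Gamma$. In this form the leading balance is a pure Neumann condition perturbed at $O(\delta)$, which is exactly the dual of the Robin--Dirichlet case (the identity and $\partial_n$ trace operators exchanged), so the whole derivation proceeds by the formal substitution $\varepsilon\leftrightarrow\varepsilon^{-1}$ together with this swap of roles.

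For the interior equations I would exploit the linearity of $-\boldsymbol{\nabla}^2 + \alpha$. Inserting $u = \sum_{m\ge 0}\delta^m u_m$ and collecting powers of $\delta$ gives $-\boldsymbol{\nabla}^2 u_0 + \alpha u_0 = f$ at order $\delta^0$, and $-\boldsymbol{\nabla}^2 u_m + \alpha u_m = 0$ at every order $\delta^m$ with $m\ge 1$, since $f$ is $\delta$-independent; this reproduces the bulk equations of \eqref{eq:Robin_N_O0}--\eqref{eq:Robin_N_Om} and is routine.

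The crux is the boundary bookkeeping. Writing $\delta u = \sum_{m\ge 0}\delta^{m+1}u_m$ and $\partial_n u = \sum_{m\ge0}\delta^m \partial_n u_m$, one sees that $u_m$ is promoted by one power of $\delta$ relative to $\partial_n u_m$, so matching the coefficient of $\delta^m$ couples the trace of the previous iterate $u_{m-1}$ to the normal derivative of the current one $u_m$. Order $\delta^0$ yields $\partial_n u_0 = q_{RN}$; order $\delta^1$ yields $u_0 + \partial_n u_1 = u_{RD}$, i.e.\ $\partial_n u_1 = u_{RD} - u_0$; and for $m\ge 2$ the right-hand side vanishes, giving $u_{m-1} + \partial_n u_m = 0$, i.e.\ $\partial_n u_m = -u_{m-1}$. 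After reindexing, this is precisely the cascade of Neumann data in the statement. I would present this index shift carefully, as it is the only place where an off-by-one error can creep in.

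The step I expect to require the most care --- and the genuine obstacle if one wanted more than a formal result --- is the solvability of the resulting cascade of Neumann problems. Each $u_m$ solves a Helmholtz-type Neumann problem for $-\boldsymbol{\nabla}^2 + \alpha$; for $\alpha > 0$ the associated bilinear form is coercive and every subproblem is uniquely solvable, which is exactly the role played by the reaction term noted after \eqref{eq:Poisson}. For $\alpha = 0$ one must instead verify the compatibility condition (the data integrating to zero by the divergence theorem) at each order and accept uniqueness only up to an additive constant. In either case I would remark that, since the boundary datum at order $m$ is the trace $u_{m-1}|_\Gamma$ of the previous solution, smoothness of $\Gamma$, $u_{RD}$ and $q_{RN}$ propagates along the hierarchy, so that $\partial_n u_m$ and $u_m|_\Gamma$ are well defined at every step and the formal expansion stays self-consistent.
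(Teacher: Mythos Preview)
Your proposal is correct and follows exactly the approach the paper indicates: the paper itself omits all details, stating only that both propositions ``can be proven -- for smooth enough data and domain -- using classical matched asymptotic expansions in $\varepsilon$ or $\varepsilon^{-1}$'' with references to standard textbooks. Your substitution $\delta=\varepsilon^{-1}$, the power matching, and the resulting cascade of Neumann problems are precisely this classical procedure carried out explicitly; in fact you supply more detail than the paper does, and your closing remarks on solvability (coercivity for $\alpha>0$, the compatibility issue for $\alpha=0$) go beyond what the paper claims, since the proposition is stated only at the level of \emph{formal} asymptotics.
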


The above propositions can be proven -- for smooth enough data and domain -- using classical matched asymptotic expansions in $\varepsilon$ or $\varepsilon^{-1}$, e.g., see \cite{murray84} chapter 7 or \cite{zeytounian} chapter 3. We omit the details for brevity. Moreover, we will say that a numerical approximation of \eqref{eq:full_robin} is asymptotic preserving (AP) if the discrete asymptotics of such approximation is consistent with the continuous ones at all orders in powers of $\varepsilon$. Note that when the Robin condition is associated with a physical process, usually $q_{RN}=0$, for the small $\varepsilon$ limit, or $u_{RD}=0$ in the small $\varepsilon^{-1}$ limit. However, keeping these values allows us to include the discussion of when using Robin conditions to handle Dirichlet or Neumann boundaries.
 \section{Domain discretization: mesh and boundary mapping}\label{sec:prelimiaries}

\begin{figure}[t]
     \centering
     \begin{subfigure}
         \centering
         \includegraphics{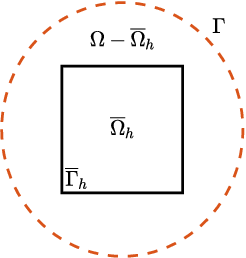}
     \end{subfigure}
     \hspace{0.5cm}
     \begin{subfigure}
         \centering
         \includegraphics{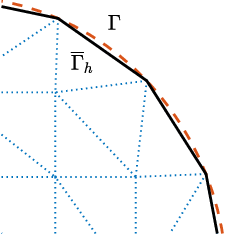}
     \end{subfigure}
     \vskip \baselineskip
     \begin{subfigure}
         \centering
         \includegraphics{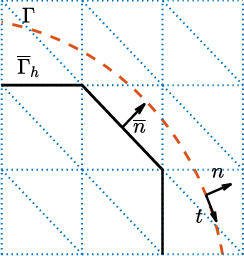}
     \end{subfigure}
     \hspace{0.5cm}
     \begin{subfigure}
         \centering
         \includegraphics{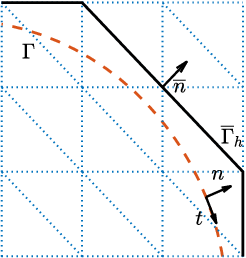}
     \end{subfigure}
        \caption{Conceptual layout of the \textit{true} domain and the \textit{surrogate} domain (top left). Conformal affine meshing (top right). \textit{Extrapolation} mode (bottom left) and \textit{interpolation} mode (bottom right). Surrogate domain boundary (solid black): $\overline{\Gamma}_h$, true domain boundary (dashed red): $\Gamma$, and triangulated mesh (dotted blue in the background): elements $\Omega^n$.}
        \label{fig:computational_domains}
\end{figure}

In this section, different domain discretizations will be defined to form the geometrical understanding and notation. For this, we consider regular (quasi-uniform) tessellations using $N_{\text{elm}}$ conforming non-overlapping affine triangular elements. The computational domain, $\Omega_h$, is thus defined by 
\begin{equation}
     \Omega_h  = \bigcup_{n = 1}^{N_{\text{elm}}} \Omega^n,
\end{equation}
with $\Omega^n$ being the $n$'th element. We denote the maximum edge length for each element as $h_{\max}^n$. Similar local quantities, such as the minimum, $h_{\min}^n$, and the average, $h_{\text{avg}}^n$, edge length can be computed. The associated global measure of the maximum edge length is taken as $h_{\max} = \max (\{h_{\max}^n\}^{N_{\text{elm}}}_{n=1}) $, where $h_{\min}$ and $h_{\text{avg}}$ are found similarly. 

Let $\Gamma_h$ denote the boundary of the computational domain. We will consider two main cases in this paper: a conformal (fitted) and a non-conformal (unfitted) boundary approach. In the fitted case, $\Gamma_h$ is conformal with the analytical boundary $\Gamma$, and is usually defined by a classical iso-parametric high-degree finite element approximation. The analytical boundary is embedded into a non-conformal background mesh in the unfitted case. Here, we distinguish between the \textit{true} domain, defined by the analytical boundary $\Gamma$, and the \textit{surrogate} domain. The surrogate domain is denoted by $\overline{\Gamma}_h$. It is defined here using level set theory \cite{Osher2001} equipped with a signed distance function -- such as the Euclidean metric -- to capture the geometry of the domain implicitly.

For a given element, $\Omega^n$, of arbitrary type, three situations can occur depending on the level set sign at its spanning vertices: inside $\Omega$ (all positive signs), outside $\Omega$ (all negative signs), crossed by the true boundary $\Gamma$ (mix of signs). Once the elements crossed by the true boundary are known, the set of element edges closest to the true boundary can be located using a closest-point map (or some other projection strategy). Ultimately, we obtain a set of edges defining the surrogate boundary, $\overline{\Gamma}_h$. This implicitly defines a surrogate domain $\overline{\Omega}_h$ composed of the elements contained within the surrogate boundary. As before, we denote these elements by $\{\Omega^n\}_{n=1}^{N_{\text{elm}}}$ .
On $\overline{\Gamma}_h$ we define the outward-facing normal unit vector by $\overline{\boldsymbol{n}} = \overline{\boldsymbol{n}}(x,y): \overline{\Gamma}_h \mapsto {\rm I\!R}^{d}$. 

\subsection{Boundary mapping}\label{sec:mappings}

For the purposes of this paper, we consider two different ways of defining the surrogate boundary $\overline{\Gamma}_h$. These are referred to as \textit{extrapolation} and \textit{interpolation} mode. These are defined as follows
\begin{itemize}
    \item Extrapolation: $\overline{\Omega}_h$ is constructed using only whole elements inside the true domain $\Omega$.
    \item Interpolation: $\overline{\Omega}_h$ is constructed by including all whole elements inside $\Omega$ and the ones intersecting $\Gamma$.
\end{itemize}

To support the definitions, the true and different surrogate domains are illustrated in a Cartesian coordinate system spanned by the horizontal $x$-axis and the vertical $y$-axis in Figure \ref{fig:computational_domains}. Also, an example of a conformal affine mesh is visualized in the figure. Interpolation-based embedded methods have been studied in recent works, see, e.g., \cite{yang2023optimal,Badia2022}, as well as \cite{yang2023optimal} in the context of the second-order SBM approximations on adaptive quadrilateral meshes. In this work, we consider the latter approach, combined with very high-order spectral element approximations on simplicial meshes, although the exact same formulations can be used on tensor product Cartesian meshes.

\begin{remark}
In the interpolation mode, the discrete domain includes a small region outside the analytical one. This requires the definition of an extension of the problem data and possibly of the exact solution in a small region outside the physical domain. This is a well-known issue related to the construction of embedded methods (see, e.g.,  \cite{Glowinski1994,Burman2016,Lozinski2019,Larson2019,Lehrenfeld2018}). Some practical solutions have been proposed in the past, e.g., in \cite{Glowinski1994,Fabreges2013}. We do not dwell on this issue in this work. All the applications considered involve manufactured solutions, for which this extension is naturally obtained from the (known) analytical solution.
\end{remark}

Once the surrogate boundary is defined, a key ingredient is the availability of a mapping from the surrogate to the true boundary, where the boundary conditions are defined. This mapping is denoted by $\mathcal{M}$ as
\begin{equation}\label{eq:Gamma-map}
     \mathcal{M}(\overline{\boldsymbol{x}}): \overline{\Gamma}_h \mapsto \Gamma, \quad \quad \quad \overline{\boldsymbol{x}} \mapsto \boldsymbol{x},
\end{equation}
where a point on $\Gamma$ is denoted by $\boldsymbol{x} = (x,y)$ and similarly on $\overline{\Gamma}_h$ by $\boldsymbol{\overline{x}} = (x,y)$. Using the above, we have that $\boldsymbol{x} = \mathcal{M}(\boldsymbol{\overline{x}})$, i.e., the set of points on the true boundary spanned by the points of $\overline{\Gamma}_h$. This allows us to define a distance vector as $\boldsymbol{d} = \boldsymbol{d}(\boldsymbol{\overline{x}}) = \boldsymbol{x} - \boldsymbol{\overline{x}}$. Note that there are several possible ways to define the mapping. For example, with the closest-point projection, any point on $\overline{\Gamma}$ is mapped to $\Gamma$ using the true normal, $\boldsymbol{n}$, such that $\boldsymbol{d} \cdot \boldsymbol{n} \equiv 1$ and hereby $\boldsymbol{d} = \|\boldsymbol{d}\| \boldsymbol{n}$. Other definitions are explored in this paper. The mapping, $\mathcal{M}$, allows us to define extensions of any particular function from the surrogate to the true boundary (and vice-versa). 
 \section{Spectral element discretization on conformal meshes}\label{sec:numerical_method_and_discretization}

In the following, we consider and define the spectral element method on conformal meshes for generalized Aubin's and Nitsche's methods. This is then related to the asymptotic preserving properties of the two variational forms.

\subsection{Spectral element expansion}\label{sec:SEM}

We consider approximate solutions in the finite element space of continuous and piece-wise polynomials of degree $P$ as
\begin{equation}\label{eq:VP}
    \mathcal{V}^P_h = \{v \in C^0(\Omega); \forall n \in \{1,..., N_{\text{elm}} \}, v|_{\Omega^n} \in {\rm I\!P}^{P}\}.
\end{equation}
Thus, the discrete unknown, $u_h$, can be expressed as $N_{\text{elm}}$ local element-based solutions, $u^n_h = u^n_h(x,y): \Omega^n \mapsto {\rm I\!R}$, as
\begin{equation}\label{eq:sol_rep_1}
    u_h = u_h(x,y) = \bigoplus_{n = 1}^{N_{\text{elm}}} u^n_h(x,y), \quad \text{for}  \quad (x,y) \in \Omega^n, 
\end{equation}
In this work, $u_h$ is continuous and expanded on local bases whose restriction to an element $\Omega^n$ may be equivalently defined in terms of nodal unknowns or modal expansions as in \cite{Karniadakis2005}
\begin{equation}\label{eq:sol_rep_2}
    u^n_h(x,y) \approx \sum_{m=1}^{N_{\text{ep}}} \hat{u}^n_{m} \psi_{m}(x,y)=\sum_{i=1}^{N_{\text{ep}}} u^n_ih_{i}(x,y), \quad \text{for}  \quad (x,y) \in \Omega^n,
\end{equation}
where $N_{\text{ep}}$ is the number of quadrature element points in an element. The local modal form is expressed as a sum of $N_{\text{ep}}$ local expansion coefficients, $\hat{u}^n_{m}$, and associated modal two-dimensional basis functions, $\psi_{m}$, from the family of Jacobi polynomials up to order $P$ defined on the $N_{\text{ep}}$ quadrature points. Moreover, the local nodal counterpart is expressed as a sum of $N_{\text{ep}}$ local solution values, $u^n_i$, and nodal two-dimensional basis functions, $h_i$, as Lagrange polynomials up to order $P$ defined on the $N_{\text{ep}}$ quadrature points. For completeness, we denote the $N_{\text{ep}}$ quadrature points in $\Omega^n$ as $\{x_i,y_i\}_{i=1}^{N_{\text{ep}}}$, such that $u_i^n = u^n(x_i,y_i)$ and the Lagrange polynomials have the cardinal property $h_j(x_i,y_i) = \delta_{ij}$. In the case of finite triangular elements, the number of element points, $N_{\text{ep}}$, is linked to the polynomial order of the basis functions, $P$, via $N_{\text{ep}} =\frac{(P+1)(P+2)}{2}$. 

\begin{figure}[t]
    \centering
    \includegraphics{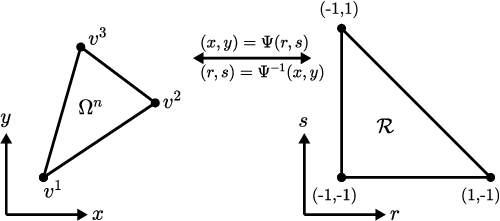}
    \caption{Concept of the local element domain, $\Omega^n$, the reference domain, $\mathcal{R}$, and the affine mapping, $\Psi: \Omega^n \mapsto \mathcal{R}$.}
    \label{fig:affine_mapping}
\end{figure}

In practice, we adopt an iso-parametric approach, where each physical element, $\Omega^n$, is mapped onto a triangular reference domain, $\mathcal{R} = \{\boldsymbol{r} = (r,s) | (r, s) \geq -1 ; r+s \leq 0 \}$, by introducing the affine mapping, $\Psi = \Psi(r,s): \Omega^n \mapsto \mathcal{R}$, such that $(x,y) = \Psi(r,s)$ and $(r,s) = \Psi^{-1}(x,y)$. See Figure \ref{fig:affine_mapping} for a visualization. In accordance with \cite{Hesthaven2007}, we construct an orthonormal basis, $\psi_m(r,s)$, on a set of quadrature points, $(r,s)$, in $\mathcal{R}$. The basis functions are chosen as the family of Jacobi polynomials. Moreover, the quadrature points are chosen to be Gauss-Lobatto distributed by applying an optimized blending and wrapping procedure of the nodes by \cite{Warburton2006} such that the Lebesque constant is minimized. This delicate arrangement ultimately leads to a well-behaved generalized Vandermonde matrix for interpolation as
\begin{equation}
   \mathcal{V} = \mathcal{V}_{ij} = \psi_j(r_i,s_i), \quad \text{for} \quad (i,j) = 1,...,N_{\text{ep}}, \quad \mathcal{V} \in {\rm I\!R}^{N_\text{ep} \times N_\text{ep}}.
\end{equation}

\subsection{Variational formulation on conformal meshes: a generalized Nitsche's method}

We consider a weak form of \eqref{eq:Poisson} based on a generalized Nitsche's method, hereby embedding all types of boundary conditions. On conforming meshes, the method's consistency, coercivity, and error estimates have been widely studied in the past, e.g., see \cite{Nitsche1971,Benzaken2022,Juntunen2009}. The classical $L^2$ scalar product over the set $A$ is denoted by 
\begin{equation}\label{eq:scalar} 
    (f,g)_{A}  =\int\limits_{A} f\,g\, dA.
\end{equation}

The weak formulation is now to find $u_h \in \mathcal{V}^P_h$ such that $\forall v_h \in\mathcal{V}^P_h$, where the following holds 
\begin{equation}\label{eq:variational_full_cbm} 
\begin{split}
        (\boldsymbol{\nabla} u_h,\boldsymbol{\nabla} v_h)_{\Omega_h} +&\alpha (u_h, v_h)_{\Omega_h} - ( f,v_h)_{\Omega_h}-(q_N,v_h)_{\Gamma_{h,N}}\\
        - &(\boldsymbol{\nabla} u_h\cdot \boldsymbol{n},v_h)_{\Gamma_{h,D}} -  (u_h-u_D,\boldsymbol{\nabla} v_h\cdot \boldsymbol{n})_{\Gamma_{h,D}}+ \gamma^{-1} (u_h-u_D,v_h)_{\Gamma_{h,D}}  \\
        -&\dfrac{\gamma}{\gamma+\varepsilon} (\boldsymbol{\nabla} u_h\cdot \boldsymbol{n},v_h )_{\Gamma_{h,R}}
        - \dfrac{\gamma}{\gamma+\varepsilon} ( u_h-u_{RD},\boldsymbol{\nabla} v_h\cdot \boldsymbol{n})_{\Gamma_{h,R}}+
        \dfrac{1}{\gamma+\varepsilon} (u_h-u_{RD},v_h)_{\Gamma_{h,R}}\\
        -&\dfrac{\varepsilon}{\gamma+\varepsilon} (q_{RN},v_h)_{\Gamma_{h,R}} -
        \dfrac{\varepsilon\gamma}{\gamma+\varepsilon} (\boldsymbol{\nabla} u_h\cdot \boldsymbol{n}-q_{RN},\boldsymbol{\nabla} v_h\cdot \boldsymbol{n})_{\Gamma_{h,R}} = 0.
\end{split}
\end{equation}
The variational form above, which includes several stabilizing and symmetrizing terms, has been analyzed in depth in the past and shown to be continuous and coercive \cite{Nitsche1971,Benzaken2022,Juntunen2009}. In particular, the parameter $\gamma^{-1}$ is the penalty coefficient of the classical Dirichlet Nitsche method and of the generalized Nitsche method of \cite{Juntunen2009}. These have been studied in several past works, e.g., see \cite{Prenter2018,Benzaken2022,Juntunen2009} and references therein.

With some algebra, the Euler-Lagrange equations -- and its strong consistency -- can be shown to be
\begin{equation}\label{eq:variational_full_EL} 
\begin{split}
        ( -\boldsymbol{\nabla}^2 u_h + \alpha u_h-f,v_h)_{\Omega_h}    +  & (u_h -u_D, \gamma^{-1} v_h- \boldsymbol{\nabla} v_h\cdot \boldsymbol{n})_{\Gamma_{h,D}}+ (\boldsymbol{\nabla} u_h\cdot \boldsymbol{n}-q_N,v_h)_{\Gamma_{h,N}} \\
        + &\dfrac{1}{\gamma+\varepsilon} (u_h-u_{RD} + \varepsilon \boldsymbol{\nabla} u_h\cdot \boldsymbol{n}- \varepsilon q_{RN},v_h-\gamma \boldsymbol{\nabla} v_h\cdot \boldsymbol{n})_{\Gamma_{h,R} }
        = 0.
\end{split}
\end{equation}

To simplify the notation, from now on, we will omit the subscript "$h$", and assume implicitly that all quantities involving discrete polynomial functions are expressed and integrated on mesh entities. Also, we recast the variational form \eqref{eq:variational_full_cbm} in the short notation as
\begin{equation}
    a \langle u,v \rangle = b\langle v \rangle,  \addtocounter{equation}{1} \setcounter{lettercounter}{1} \tag{\theequation \alph{lettercounter}} \label{eq:weak_cbm} 
\end{equation}
where
\begin{equation}
    a\langle u,v \rangle = a\langle u,v \rangle_{\Omega} + a\langle u,v \rangle_{\Gamma_D} + a \langle u,v \rangle_{\Gamma_N} + a \langle u,v \rangle_{\Gamma_R},  \addtocounter{equation}{0} \addtocounter{lettercounter}{1}\tag{\theequation \alph{lettercounter}} \label{eq:weak_cbm_bilinear} 
\end{equation}
\begin{equation}
    b\langle v \rangle = b\langle v \rangle_{\Omega} + b\langle v \rangle_{\Gamma_D} + b \langle v \rangle_{\Gamma_N} + b\langle v \rangle_{\Gamma_R},   \addtocounter{equation}{0} \addtocounter{lettercounter}{1}\tag{\theequation \alph{lettercounter}}  \label{eq:weak_cbm_linear} 
  \end{equation}
with
\begin{align}
    a\langle u,v \rangle_{\Omega} &= (\boldsymbol{\nabla} u, \boldsymbol{\nabla} v)_{\Omega} + \alpha  ( u,  v)_{\Omega},  \addtocounter{equation}{0} \addtocounter{lettercounter}{1}\tag{\theequation \alph{lettercounter}} \label{eq:weak_cbm_bilinear_2d}\\ 
    a\langle u,v \rangle_{\Gamma_D} &= \gamma^{-1} (u,v)_{\Gamma_D} - (\boldsymbol{\nabla} u \cdot \boldsymbol{n},v)_{\Gamma_D} -(u,\boldsymbol{\nabla} v \cdot \boldsymbol{n})_{\Gamma_D},  \addtocounter{equation}{0} \addtocounter{lettercounter}{1}\tag{\theequation \alph{lettercounter}} \label{eq:weak_cbm_bilinear_dbc}\\
    a \langle u,v \rangle_{\Gamma_N} &= 0, \addtocounter{equation}{0} \addtocounter{lettercounter}{1}\tag{\theequation \alph{lettercounter}} \label{eq:weak_cbm_bilinear_nbc}\\
    a \langle u,v \rangle_{\Gamma_R} &= \frac{1}{\varepsilon + \gamma} (u,v)_{\Gamma_R} 
    - \frac{\gamma}{\varepsilon + \gamma} (\boldsymbol{\nabla} u \cdot \boldsymbol{n},v)_{\Gamma_R}
    - \frac{\gamma}{\varepsilon + \gamma} (u,\boldsymbol{\nabla} v \cdot \boldsymbol{n})_{\Gamma_R}
    - \frac{\varepsilon\gamma}{\varepsilon + \gamma} (\boldsymbol{\nabla} u \cdot \boldsymbol{n},\boldsymbol{\nabla} v \cdot \boldsymbol{n})_{\Gamma_R} ,  \addtocounter{equation}{0} \addtocounter{lettercounter}{1}\tag{\theequation \alph{lettercounter}}  \label{eq:weak_cbm_bilinear_rbc}
\end{align}
\begin{align} 
    b\langle v \rangle_{\Omega} &= (f,v)_{\Omega},  \addtocounter{equation}{0} \addtocounter{lettercounter}{1}\tag{\theequation \alph{lettercounter}} \label{eq:weak_cbm_linear_2d}\\
    b\langle v \rangle_{\Gamma_D} &= \gamma^{-1} (u_D,v)_{\Gamma_D} 
    -(u_D,\boldsymbol{\nabla} v \cdot \boldsymbol{n})_{\Gamma_D} ,  \addtocounter{equation}{0} \addtocounter{lettercounter}{1}\tag{\theequation \alph{lettercounter}} \label{eq:weak_cbm_linear_dbc}\\
    b \langle v \rangle_{\Gamma_N} &= (q_N,v)_{\Gamma_N},  \addtocounter{equation}{0} \addtocounter{lettercounter}{1}\tag{\theequation \alph{lettercounter}}  \label{eq:weak_cbm_linear_nbc}\\
    b\langle v \rangle_{\Gamma_R} &=  \frac{1}{\varepsilon + \gamma} (u_{RD},v)_{\Gamma_R} + \frac{\varepsilon}{\varepsilon + \gamma} ( q_{RN},v)_{\Gamma_R}
     - \frac{\gamma}{\varepsilon + \gamma} (u_{RD},\boldsymbol{\nabla} v \cdot \boldsymbol{n})_{\Gamma_R}
    - \frac{\varepsilon\gamma}{\varepsilon + \gamma} (q_{RN},\boldsymbol{\nabla} v \cdot \boldsymbol{n})_{\Gamma_R}.  \addtocounter{equation}{0} \addtocounter{lettercounter}{1}\tag{\theequation \alph{lettercounter}}  \label{eq:weak_cbm_linear_rbc}
\end{align}

\subsection{Variational formulation on conformal meshes: a generalized Aubin's method}

We now consider a reduced form of Nitche's formulation essentially generalizing the method by Aubin \cite{Aubin1970}. By inspecting \eqref{eq:variational_full_EL}, we can see that the Robin contributions can be written as a linear combination of a Neumann penalty with coefficient $\varepsilon/(\varepsilon+\gamma)$, a Nitche penalty with penalty parameter $\gamma^{-1}$, and coefficient $\gamma/(\varepsilon+\gamma)$. To obtain a method similar to Aubin's penalty formulation, we consider the same above combination without the symmetrizing terms in the penalties. In the short-form notation, this gives the generalized Aubin variational form as
\begin{align}
    a\langle u,v \rangle_{\Omega} &= (\boldsymbol{\nabla} u, \boldsymbol{\nabla} v)_{\Omega} + \alpha  ( u,  v)_{\Omega},  \addtocounter{equation}{1} \setcounter{lettercounter}{1}\tag{\theequation \alph{lettercounter}} \label{eq:weak_cbm_bilinear_2d_aubin}\\ 
    a\langle u,v \rangle_{\Gamma_D} &= \gamma^{-1} (u,v)_{\Gamma_D} - (\boldsymbol{\nabla} u \cdot \boldsymbol{n},v)_{\Gamma_D},  \addtocounter{equation}{0} \addtocounter{lettercounter}{1}\tag{\theequation \alph{lettercounter}} \label{eq:weak_cbm_bilinear_dbc_2}\\
    a \langle u,v \rangle_{\Gamma_N} &= 0, \addtocounter{equation}{0} \addtocounter{lettercounter}{1}\tag{\theequation \alph{lettercounter}} \label{eq:weak_cbm_bilinear_nbc_aubin}\\
    a \langle u,v \rangle_{\Gamma_R} &= \frac{1}{\varepsilon + \gamma} (u,v)_{\Gamma_R} 
    - \frac{\gamma}{\varepsilon + \gamma} (\boldsymbol{\nabla} u \cdot \boldsymbol{n},v)_{\Gamma_R},  \addtocounter{equation}{0} \addtocounter{lettercounter}{1}\tag{\theequation \alph{lettercounter}}  \label{eq:weak_cbm_bilinear_rbc_aubin}
\end{align}
and
\begin{align} 
    b\langle v \rangle_{\Omega} &= (f,v)_{\Omega},  \addtocounter{equation}{0} \addtocounter{lettercounter}{1}\tag{\theequation \alph{lettercounter}} \label{eq:weak_cbm_linear_2d_aubin}\\
    b\langle v \rangle_{\Gamma_D} &= \gamma^{-1} (u_D,v)_{\Gamma_D},  \addtocounter{equation}{0} \addtocounter{lettercounter}{1}\tag{\theequation \alph{lettercounter}} \label{eq:weak_cbm_linear_dbc_aubin}\\
    b \langle v \rangle_{\Gamma_N} &= (q_N,v)_{\Gamma_N},  \addtocounter{equation}{0} \addtocounter{lettercounter}{1}\tag{\theequation \alph{lettercounter}}  \label{eq:weak_cbm_linear_nbc_aubin}\\
    b\langle v \rangle_{\Gamma_R} &=  \frac{1}{\varepsilon + \gamma} (u_{RD},v)_{\Gamma_R} + \frac{\varepsilon}{\varepsilon + \gamma} ( q_{RN},v)_{\Gamma_R}.  \addtocounter{equation}{0} \addtocounter{lettercounter}{1}\tag{\theequation \alph{lettercounter}}  \label{eq:weak_cbm_linear_rbc_aubin}
\end{align}
Integration by part yields the Euler-Lagrange equations -- that emphasizes the consistency of the formulation -- as
\begin{equation}
         ( -\boldsymbol{\nabla}^2 u + \alpha u-f,v)_{\Omega}    +   \gamma^{-1}(u -u_D, v)_{\Gamma_{D}}+ (\boldsymbol{\nabla} u\cdot \boldsymbol{n}-q_N,v)_{\Gamma_{N}}
          + \dfrac{1}{\varepsilon+\gamma} (u-u_{RD} + \varepsilon \boldsymbol{\nabla} u\cdot \boldsymbol{n}- \varepsilon q_{RN},v)_{\Gamma_{R}}
        = 0.
\end{equation}

Note that the coercivity of \eqref{eq:weak_cbm_bilinear_2d_aubin}-\eqref{eq:weak_cbm_bilinear_rbc_aubin} can be shown by classical techniques based on trace inequalities to bound the gradient term on the boundary, e.g., see  \cite{Aubin1970,Ern2004} and \cite[Chapter 37]{Ern2021}. As mentioned in other works, in the Dirichlet case, e.g., see \cite{Douglas2002}, the penalty method requires appropriate choices of the penalty constant, and is in general less consistent with respect to the symmetric and adjoint consistent Nitsche method. However, as we will see in Section \ref{sec:numerical_results}, very high orders of approximation on embedded boundaries and the presence of additional derivative terms have a strong impact on the conditioning, which makes this weak form of interest to consider in detail.

\subsection{Algebraic system of equations}

Finally, following the standard Galerkin spectral element approach, a system of equations is obtained by choosing the test function, $v$, to equal the nodal basis functions, $h$, outlined in Section \ref{sec:SEM}. Hereafter, we evaluate all the integrals using accurate numerical quadrature \cite{Patera1984,Karniadakis2005,Xu2018}. This ultimately leads to a large algebraic system of equations for the nodal unknowns as
\begin{equation}\label{eq:linear_system_of_equations}
    \mathcal{A} \boldsymbol{u} = \boldsymbol{b}, \quad \text{where} \quad \mathcal{A} \in {\rm I\!R}^{N_\text{DOF} \times N_\text{DOF} }, \quad \text{and} \quad (\boldsymbol{u},\boldsymbol{b}) \in {\rm I\!R}^{N_\text{DOF}}.
\end{equation}
with $N_{\text{DOF}}$ being the number of degrees of freedom (DOF) -- corresponding to the number of nodal points, expansion coefficients, and/or basis functions -- in the discrete scheme. The system matrix and vector are denoted by $\mathcal{A}$ and $\boldsymbol{b}$, respectively, corresponding to the weak statements' left- and right-hand sides. The discrete solution vector $\boldsymbol{u}$ has been obtained using a direct linear solver to invert $\mathcal{A}$. Contributions to $\mathcal{A}$ and $\boldsymbol{b}$ are obtained on a local element level with discrete operators for integration and differentiation as defined in \cite{Hesthaven2007}.

\subsection{Asymptotic preserving properties of the Robin variational forms} \label{sec:AP_Robin}
The boundary entry of most interest in \eqref{eq:variational_full_cbm} is the one related to the Robin terms. Because of their asymptotic properties, described by Proposition 2.1 and 2.2, these terms allow setting -- with a single implementation -- different types of boundary conditions by just choosing a different value of the $\varepsilon$ parameter. To the authors' knowledge, this approach has never been tested in the context of embedded methods and -- in any case -- not in the framework of the SBM.

For this to be possible, it is important that the discrete equations correctly mimic the asymptotic behavior of the continuous problem. In this case, we speak of asymptotic preserving approximations. It has already been remarked in \cite{Juntunen2009} that the classical implementation of the Robin conditions (based on a modified Neumann flux) does not lend itself to this purpose. They show numerically that their proposed method is well posed in both the zeroth of $\varepsilon$ and $\varepsilon^{-1}$. Here, we go further and prove the asymptotic consistency of the method with the problems defined by Proposition 2.1 and 2.2.

\subsubsection{The Nitsche-Robin method}

First, recall that we solely consider the Robin boundary, such that $\Gamma = \Gamma_R$. Then, we introduce the following notation
\begin{equation}\label{eq:notation}
\mathcal{I}_{\Omega}(u) =(\boldsymbol{\nabla} u,  \boldsymbol{\nabla} v)_{\Omega} + \alpha( u,v)_{\Omega},\quad
\mathcal{N}_{\Gamma}(u) =  (u,v/\gamma -\boldsymbol{\nabla} v\cdot \boldsymbol{n} )_{\Gamma}.
\end{equation}

Now, we consider the generalized Nitsche's method, which -- after some manipulations -- can be written as  
\begin{equation}\label{eq:variational_R_cbm} 
\begin{split}
\mathcal{I}_{\Omega}(u) - (f,v)_{\Omega} -   (\boldsymbol{\nabla} u\cdot \boldsymbol{n},v )_{\Gamma}  + \dfrac{\gamma}{\gamma+\varepsilon}\mathcal{N}_{\Gamma}(u-u_{RD})
+ \dfrac{\varepsilon\gamma}{\gamma+\varepsilon}\mathcal{N}_{\Gamma}( \boldsymbol{\nabla} u\cdot \boldsymbol{n}-q_{RN}) = 0.
\end{split}
\end{equation}

We will now consider the asymptotic limits, $\varepsilon\rightarrow0$ and $\varepsilon^{-1} \rightarrow 0$, and the corresponding asymptotic developments in \eqref{eq:robin_D_expansion} and \eqref{eq:robin_N_expansion}. We can prove the following two results in the upcoming propositions.

\begin{proposition}[AP property of the Nitsche-Robin method in the Dirichlet limit $\varepsilon\rightarrow0$]
The asymptotic approximations obtained when $\varepsilon \rightarrow 0$ from the full Robin problem in \eqref{eq:variational_R_cbm} in the finite element space $\mathcal{V}^P_h$ for the asymptotic expansion modes $u_m$ in \eqref{eq:robin_D_expansion} define a cascade of variational problems which are all coercive and consistent with \eqref{eq:Robin_D_O0} through \eqref{eq:Robin_D_Om}. In particular, the $m$'th order can be obtained from the previous ones as the solution of
\begin{equation}\label{eq:asymptotics_cbm_D}
\mathcal{I}_{\Omega}(u_m) - ( f_m,v)_{\Omega} - (\boldsymbol{\nabla} u_m\cdot \boldsymbol{n},v )_{\Gamma} +\mathcal{N}_{\Gamma}(u_m-u_{D,m}) = \mathcal{E}_m,
\end{equation}
where the left-hand side defines the classical Nitsche-Dirichlet variational form with $f_0=f$ and $f_m=0$ for $m \ge 1$, while the right-hand side introduces asymptotic error terms with $\mathcal{E}_0=0$ and
\begin{equation}\label{eq:asymptotics_cbm_D_err}
\mathcal{E}_m = -\sum_{\ell = 0}^{m-1} (-\gamma)^{-(m-\ell)}\mathcal{N}_{\Gamma}(u_{\ell}-u_{D,\ell}), \quad \text{for} \quad m\ge 1.
\end{equation}
\end{proposition}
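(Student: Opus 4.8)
The plan is to substitute the Dirichlet-type expansion \eqref{eq:robin_D_expansion} directly into the manipulated Nitsche-Robin form \eqref{eq:variational_R_cbm}, expand the two rational prefactors as geometric series in $\varepsilon$, and identify the coefficient of each power $\varepsilon^m$. Since \eqref{eq:variational_R_cbm} vanishes identically in $\varepsilon$, every such coefficient must vanish, and the claim then reduces to a bookkeeping identity showing that the $\varepsilon^m$-coefficient is exactly the symmetric Nitsche-Dirichlet form for $u_m$ minus the remainder $\mathcal{E}_m$.

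First I would record the series expansions of the prefactors,
\begin{equation}
\frac{\gamma}{\gamma+\varepsilon}=\sum_{k\ge0}(-\gamma)^{-k}\varepsilon^k,\qquad
\frac{\varepsilon\gamma}{\gamma+\varepsilon}=\sum_{k\ge0}(-\gamma)^{-k}\varepsilon^{k+1},
\end{equation}
and, using the linearity of $\mathcal{I}_{\Omega}$ and $\mathcal{N}_{\Gamma}$, set $A_0=\mathcal{N}_{\Gamma}(u_0-u_{RD})$, $A_m=\mathcal{N}_{\Gamma}(u_m)$ for $m\ge1$, and $B_0=\mathcal{N}_{\Gamma}(\partial_n u_0-q_{RN})$, $B_m=\mathcal{N}_{\Gamma}(\partial_n u_m)$ for $m\ge1$, so that the two boundary penalty contributions of \eqref{eq:variational_R_cbm} carry Taylor coefficients $A_m$ and $B_m$, respectively. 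Collecting powers of $\varepsilon$ then produces, for these two terms, a Cauchy product whose $\varepsilon^m$-coefficient is $\sum_{\ell=0}^{m}(-\gamma)^{-(m-\ell)}A_\ell$ from the first, and, because of the additional factor $\varepsilon$, $\sum_{\ell=1}^{m}(-\gamma)^{-(m-\ell)}B_{\ell-1}$ from the second.

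The decisive step is to merge these two sums. Using the definitions of the Dirichlet data, namely $u_{D,0}=u_{RD}$, $u_{D,1}=q_{RN}-\partial_n u_0$ and $u_{D,m}=-\partial_n u_{m-1}$ for $m>1$, one checks by linearity of $\mathcal{N}_{\Gamma}$ that $A_0=\mathcal{N}_{\Gamma}(u_0-u_{D,0})$ and that $A_\ell+B_{\ell-1}=\mathcal{N}_{\Gamma}(u_\ell-u_{D,\ell})$ for every $\ell\ge1$. Writing $g_\ell:=\mathcal{N}_{\Gamma}(u_\ell-u_{D,\ell})$, the two sums then telescope into the single expression $\sum_{\ell=0}^{m}(-\gamma)^{-(m-\ell)}g_\ell$ for the combined boundary contribution at order $m$. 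Adding the volume and normal-flux coefficients $\mathcal{I}_{\Omega}(u_m)-(f_m,v)_{\Omega}-(\boldsymbol{\nabla}u_m\cdot\boldsymbol{n},v)_{\Gamma}$, with $f_0=f$ and $f_m=0$ for $m\ge1$ since the forcing enters only at order zero, splitting off the $\ell=m$ term $g_m$, and equating the total coefficient to zero yields exactly \eqref{eq:asymptotics_cbm_D} with remainder $\mathcal{E}_m=-\sum_{\ell=0}^{m-1}(-\gamma)^{-(m-\ell)}g_\ell$, matching \eqref{eq:asymptotics_cbm_D_err}; the order-zero case reduces to $g_0=0$, that is $\mathcal{E}_0=0$.

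It then remains to read off the two qualitative properties. The left-hand side of \eqref{eq:asymptotics_cbm_D} is precisely the symmetric Nitsche-Dirichlet bilinear form applied to $u_m$ with data $u_{D,m}$, so coercivity (for a sufficiently small penalty parameter $\gamma$) follows from the classical Nitsche estimates based on a discrete trace inequality. Consistency is obtained by integrating that left-hand side by parts to recover the strong Dirichlet problem $-\boldsymbol{\nabla}^2 u_m+\alpha u_m=f_m$ with $u_m=u_{D,m}$ on $\Gamma$, i.e. \eqref{eq:Robin_D_O0} through \eqref{eq:Robin_D_Om}; note that $\mathcal{E}_m$ is a genuine consistency residual, since each $g_\ell$ vanishes on the exact continuous solution, for which $u_\ell=u_{D,\ell}$ holds pointwise on $\Gamma$. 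I expect the only real obstacle to be the index bookkeeping of the previous paragraph: the shift by one power of $\varepsilon$ in the $\varepsilon\gamma/(\gamma+\varepsilon)$-term, together with the special mixed form of $u_{D,1}$, must be handled carefully so that the two Cauchy products recombine into the single clean sum over $g_\ell$; once that alignment is in place, the rest is routine.
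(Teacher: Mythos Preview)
Your proposal is correct and follows essentially the same strategy as the paper: substitute the ansatz \eqref{eq:robin_D_expansion} into \eqref{eq:variational_R_cbm} and match powers of $\varepsilon$, then invoke the classical Nitsche coercivity and integration-by-parts consistency. The only cosmetic difference is that the paper clears the denominator by multiplying through by $(\gamma+\varepsilon)$ before matching, whereas you expand $\gamma/(\gamma+\varepsilon)$ as a geometric series and use a Cauchy product; the two are equivalent, and in fact your variant is exactly the Maclaurin-series device the paper itself uses later in the proof of Proposition~6.1. One small wording slip: at order zero the equation does not ``reduce to $g_0=0$'' but to the full Nitsche--Dirichlet form with $\mathcal{E}_0=0$ (the sum is empty); $g_0$ remains on the left-hand side.
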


\begin{proof}
Equation \eqref{eq:asymptotics_cbm_D} and \eqref{eq:asymptotics_cbm_D_err} are readily obtained by introducing the ansatz from \eqref{eq:robin_D_expansion} in \eqref{eq:variational_R_cbm}, multiplying by $\gamma+\varepsilon$, and equating -- term by term -- all the coefficients of $\varepsilon^m$ powers. All problems are verified by the exact modes of \eqref{eq:Robin_D_O0}-\eqref{eq:Robin_D_Om}, which can be shown classically by integrating the domain equations by parts and using the boundary conditions. The coercivity of the classical Nitsche's variational form has been studied widely, e.g., see \cite{Benzaken2022} and references therein.
\end{proof}

Note that in the above development, the second Nitsche term in \eqref{eq:variational_R_cbm} involving the normal derivatives plays an important role when matching the correct Dirichlet values at all orders. This term affects the form of the Neumann asymptotic limit. In particular, we have the following equivalent result in the latter case.

\begin{proposition}[AP property of the Nitsche-Robin method in the Neumann limit $\varepsilon^{-1}\rightarrow 0$]
The asymptotic approximations obtained when $\varepsilon^{-1} \rightarrow 0$ from the full Robin problem in \eqref{eq:variational_R_cbm} in the finite element space $\mathcal{V}^P_h$ for the asymptotic expansion modes $u_m$ in \eqref{eq:robin_N_expansion} define a cascade of variational problems which are all coercive and consistent with \eqref{eq:Robin_N_O0} through \eqref{eq:Robin_N_Om}. In particular, the $m$'th order can be obtained from the previous ones as the solution of 
\begin{equation}\label{eq:asymptotics_cbm_N}
\mathcal{I}_{\Omega}(u_m) - (f_m,v)_{\Omega} - (q_{N,m} ,v)_{\Gamma}-  \gamma(\boldsymbol{\nabla} u_m\cdot \boldsymbol{n} -q_{N,m},\boldsymbol{\nabla} v\cdot \boldsymbol{n}) = \mathcal{E}_m,
\end{equation}
where the left-hand side defines a Neumann variational form with an additional penalty with $f_0=f$ and $f_m=0$ for $m\ge 1$, while the right-hand side introduces asymptotic error terms as $\mathcal{E}_0=0$ and
\begin{equation}\label{eq:asymptotics_cbm_N_err}
\mathcal{E}_m = \sum_{\ell = 0}^{m-1} (-\gamma)^{(m+1-\ell)}\mathcal{N}_{\Gamma}(\boldsymbol{\nabla} u_{\ell}\cdot \boldsymbol{n} -q_{N,\ell}),\quad\text{ for }\quad m\ge 1.
\end{equation}
\end{proposition}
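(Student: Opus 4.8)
The plan is to mirror the argument given for the Dirichlet limit, replacing the multiplier $\gamma+\varepsilon$ by one adapted to the large-$\varepsilon$ regime. First I would rescale \eqref{eq:variational_R_cbm} by $(\gamma+\varepsilon)/\varepsilon=1+\gamma\varepsilon^{-1}$, which turns the two Robin prefactors $\gamma/(\gamma+\varepsilon)$ and $\varepsilon\gamma/(\gamma+\varepsilon)$ into the polynomials $\gamma\varepsilon^{-1}$ and $\gamma$ in $\varepsilon^{-1}$, giving
\begin{equation}
(1+\gamma\varepsilon^{-1})\bigl[\mathcal{I}_{\Omega}(u)-(f,v)_{\Omega}-(\boldsymbol{\nabla} u\cdot\boldsymbol{n},v)_{\Gamma}\bigr]+\gamma\varepsilon^{-1}\mathcal{N}_{\Gamma}(u-u_{RD})+\gamma\,\mathcal{N}_{\Gamma}(\boldsymbol{\nabla} u\cdot\boldsymbol{n}-q_{RN})=0.
\end{equation}
I would then insert the ansatz \eqref{eq:robin_N_expansion} and equate the coefficients of each power $\varepsilon^{-k}$, exactly as in the Dirichlet case but with $\varepsilon^{-1}$ playing the role formerly played by $\varepsilon$.

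Collecting the order-$\varepsilon^{-k}$ contribution and abbreviating $A_k:=\mathcal{I}_{\Omega}(u_k)-(f_k,v)_{\Omega}-(\boldsymbol{\nabla} u_k\cdot\boldsymbol{n},v)_{\Gamma}$ (with $f_0=f$ and $f_k=0$ for $k\ge1$), $B_k:=\mathcal{N}_{\Gamma}(u_k)$ for $k\ge1$ with $B_0:=\mathcal{N}_{\Gamma}(u_0-u_{RD})$, and $C_k:=\mathcal{N}_{\Gamma}(\boldsymbol{\nabla} u_k\cdot\boldsymbol{n})$ for $k\ge1$ with $C_0:=\mathcal{N}_{\Gamma}(\boldsymbol{\nabla} u_0\cdot\boldsymbol{n}-q_{RN})$, the cascade reads $A_k+\gamma A_{k-1}+\gamma B_{k-1}+\gamma C_k=0$. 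The clean left-hand side of \eqref{eq:asymptotics_cbm_N} emerges from the combination $A_k+\gamma C_k$: expanding $\mathcal{N}_{\Gamma}$ through its definition in \eqref{eq:notation}, the two boundary flux terms $(\boldsymbol{\nabla} u_k\cdot\boldsymbol{n},v)_{\Gamma}$ cancel, leaving $\mathcal{I}_{\Omega}(u_k)-\gamma(\boldsymbol{\nabla} u_k\cdot\boldsymbol{n},\boldsymbol{\nabla} v\cdot\boldsymbol{n})_{\Gamma}$, and adding and subtracting the flux data $q_{N,k}$ recasts this as the Neumann-plus-penalty form of \eqref{eq:asymptotics_cbm_N} up to a term $\gamma\,\mathcal{N}_{\Gamma}(q_{N,k})$, everything else being pushed into $\mathcal{E}_k$.

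The remaining work is to show that $\mathcal{E}_k$ collapses to the geometric sum \eqref{eq:asymptotics_cbm_N_err}. At order $\varepsilon^0$ the balance $A_0+\gamma C_0=0$ gives both $\mathcal{E}_0=0$ (since $q_{N,0}=q_{RN}$ makes $C_0=\mathcal{N}_{\Gamma}(\boldsymbol{\nabla} u_0\cdot\boldsymbol{n}-q_{N,0})$) and $A_0=-\gamma\,\mathcal{N}_{\Gamma}(\boldsymbol{\nabla} u_0\cdot\boldsymbol{n}-q_{N,0})$. For $k\ge1$, combining the order-$k$ balance $A_k+\gamma C_k=-\gamma A_{k-1}-\gamma B_{k-1}$ with the recasting above gives $\mathcal{E}_k=-\gamma A_{k-1}-\gamma B_{k-1}-\gamma\,\mathcal{N}_{\Gamma}(q_{N,k})$; the crucial cancellation is that $-\gamma B_{k-1}$ annihilates $-\gamma\,\mathcal{N}_{\Gamma}(q_{N,k})$ once the definitions $q_{N,1}=u_{RD}-u_0$ and $q_{N,k}=-u_{k-1}$ for $k>1$ from \eqref{eq:Robin_N_O1}-\eqref{eq:Robin_N_Om} are inserted (both give $\mathcal{N}_{\Gamma}(q_{N,k})=-B_{k-1}$), leaving $\mathcal{E}_k=-\gamma A_{k-1}$ for all $k\ge1$. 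Feeding the order-$k$ balance $A_k=-\gamma A_{k-1}-\gamma B_{k-1}-\gamma C_k$ into $\mathcal{E}_{k+1}=-\gamma A_k$, together with $\mathcal{E}_k=-\gamma A_{k-1}$ and the identity $B_{k-1}+C_k=\mathcal{N}_{\Gamma}(\boldsymbol{\nabla} u_k\cdot\boldsymbol{n}-q_{N,k})$, yields the one-step recursion $\mathcal{E}_{k+1}=-\gamma\,\mathcal{E}_k+\gamma^2\,\mathcal{N}_{\Gamma}(\boldsymbol{\nabla} u_k\cdot\boldsymbol{n}-q_{N,k})$ with $\mathcal{E}_1=\gamma^2\,\mathcal{N}_{\Gamma}(\boldsymbol{\nabla} u_0\cdot\boldsymbol{n}-q_{N,0})$, and a one-line induction identifies its solution with \eqref{eq:asymptotics_cbm_N_err}.

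Finally I would establish consistency and coercivity. For the exact modes of \eqref{eq:Robin_N_O0}-\eqref{eq:Robin_N_Om} one has $\boldsymbol{\nabla} u_\ell\cdot\boldsymbol{n}=q_{N,\ell}$ on $\Gamma$, so every factor $\mathcal{N}_{\Gamma}(\boldsymbol{\nabla} u_\ell\cdot\boldsymbol{n}-q_{N,\ell})$ and hence $\mathcal{E}_m$ vanishes; integrating the domain equation against $v$ by parts and inserting the Neumann data then confirms that these modes solve the clean problems \eqref{eq:asymptotics_cbm_N}. Coercivity of the Neumann-plus-penalty bilinear form follows from the standard trace-inequality bound on the boundary gradient term, exactly as for the generalized Aubin form (see \cite{Aubin1970,Ern2004} and \cite[Chapter 37]{Ern2021}). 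The main obstacle is the bookkeeping of the error term: tracking how the $1+\gamma\varepsilon^{-1}$ factor couples consecutive orders and verifying that the data/penalty cancellation really reduces $\mathcal{E}_k$ to $-\gamma A_{k-1}$, since this is precisely what makes the error telescope into a single geometric series rather than an unstructured accumulation of lower-order contributions.
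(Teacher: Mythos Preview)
Your proposal is correct and follows essentially the same route as the paper: multiply \eqref{eq:variational_R_cbm} by $1+\gamma\varepsilon^{-1}$, insert the ansatz \eqref{eq:robin_N_expansion}, and equate powers of $\varepsilon^{-1}$. Your bookkeeping is in fact far more explicit than the paper's, which simply asserts that \eqref{eq:asymptotics_cbm_N}--\eqref{eq:asymptotics_cbm_N_err} are ``readily obtained'' by this procedure; your recursion $\mathcal{E}_{k+1}=-\gamma\mathcal{E}_k+\gamma^2\mathcal{N}_{\Gamma}(\boldsymbol{\nabla} u_k\cdot\boldsymbol{n}-q_{N,k})$ and the data cancellation $\mathcal{N}_{\Gamma}(q_{N,k})=-B_{k-1}$ are the right mechanism and check out.

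The one place where the paper does more than you is coercivity. You appeal to ``the standard trace-inequality bound \dots\ exactly as for the generalized Aubin form,'' but the Aubin--Neumann limit has \emph{no} boundary penalty, so its coercivity is just that of the plain Neumann problem. Here the bilinear form carries the extra term $-\gamma(\boldsymbol{\nabla} u_m\cdot\boldsymbol{n},\boldsymbol{\nabla} v\cdot\boldsymbol{n})_{\Gamma}$ with a \emph{negative} sign, and the paper spells out the computation: the trace inequality $\|h^{1/2}\boldsymbol{\nabla} u\|_{0,\Gamma}\le C_{\mathrm{I}}\|\boldsymbol{\nabla} u\|_{0,\Omega}$ gives
\[
(\boldsymbol{\nabla} u_m,\boldsymbol{\nabla} u_m)_{\Omega}+\alpha(u_m,u_m)_{\Omega}-\gamma(\boldsymbol{\nabla} u_m\cdot\boldsymbol{n},\boldsymbol{\nabla} u_m\cdot\boldsymbol{n})_{\Gamma}\ge(1-h^{-1}C_{\mathrm{I}}\gamma)\|\boldsymbol{\nabla} u_m\|_{0,\Omega}^2+\alpha\|u_m\|_{0,\Omega}^2,
\]
hence coercivity provided $1-h^{-1}C_{\mathrm{I}}\gamma\ge C>0$, i.e.\ $\gamma=C_{\gamma}h$ with $C_{\gamma}C_{\mathrm{I}}<1$. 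This is the only genuinely new analytic ingredient relative to the Dirichlet case, and it is worth stating explicitly rather than by reference.
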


\begin{proof}
Equation \eqref{eq:asymptotics_cbm_N} and \eqref{eq:asymptotics_cbm_N_err} are readily obtained by introducing  ansatz from \eqref{eq:robin_N_expansion} in \eqref{eq:variational_R_cbm}, multiplying by $\varepsilon^{-1} \gamma+1$, and equating -- term by term -- all the coefficients of $\varepsilon^{-m}$ powers. Integrating by parts of \eqref{eq:asymptotics_cbm_N} shows immediately the exact consistency with \eqref{eq:Robin_N_O0} to \eqref{eq:Robin_N_Om}, for all mode equations. Concerning coercivity, we make use of the trace inequality
\begin{equation}
\|h^{1/2}\boldsymbol{\nabla} u\|_{0,\Gamma} \le C_{\mathrm{I }}\|\boldsymbol{\nabla} u\|_{0,\Omega}.
\end{equation}
This allows us to argue that 
\begin{equation}
(\boldsymbol{\nabla} u_m,\boldsymbol{\nabla} u_m )_{\Omega} +\alpha( u_m, u_m )_{\Omega} -  \gamma(\boldsymbol{\nabla} u_m\cdot \boldsymbol{n},\boldsymbol{\nabla} u_m\cdot \boldsymbol{n})_{\Gamma}
\ge (1-h^{-1}C_{\mathrm{I}}\gamma)(\boldsymbol{\nabla} u_m,\boldsymbol{\nabla} u_m )_{\Omega} +\alpha( u_m, u_m )_{\Omega},
\end{equation}
which shows the coercivity of the variational form as soon as 
\begin{equation}
1-h^{-1}C_{\mathrm{I}}\gamma \ge C > 0.
\end{equation}
\end{proof}

\subsubsection{The Aubin-Robin method}
We proceed as in the previous section. With a similar notation, we recast the Aubin-Robin method as 
\begin{equation}\label{eq:variational_R_cbmA} 
\begin{split}
\mathcal{I}_{\Omega}(u) - (f,v)_{\Omega} -   (\boldsymbol{\nabla} u\cdot \boldsymbol{n},v )_{\Gamma}  + \dfrac{\gamma}{\gamma+\varepsilon}\mathcal{A}_{\Gamma}(u-u_{RD})
+ \dfrac{\varepsilon\gamma}{\gamma+\varepsilon}\mathcal{A}_{\Gamma}( \boldsymbol{\nabla} u\cdot \boldsymbol{n}-q_{RN}) = 0,
\end{split}
\end{equation}
where $\mathcal{I}_{\Omega}$ is defined as in \eqref{eq:notation}, while 
\begin{equation}\label{eq:notationA}
\mathcal{A}_{\Gamma}(u) =  (u,v/\gamma )_{\Gamma}.
\end{equation}
The scheme is formally identical to the Nitsche one, except for the definition of the penalty which does not include derivatives of the test function. Proceeding as before, we can prove the following two results. 

\begin{proposition}[AP property of the Aubin-Robin method in the Dirichlet limit $\varepsilon\rightarrow0$]
The asymptotic approximations obtained when $\varepsilon \rightarrow 0$ from the full Robin problem in \eqref{eq:variational_R_cbmA} in the finite element space $\mathcal{V}^P_h$ for the asymptotic expansion modes $u_m$ in \eqref{eq:robin_D_expansion} define a cascade of variational problems which are all coercive and consistent with \eqref{eq:Robin_D_O0} through \eqref{eq:Robin_D_Om}. In particular, the $m$'th order can be obtained from the previous ones as the solution of
\begin{equation}\label{eq:asymptotics_cbm_D_2}
\mathcal{I}_{\Omega}(u_m) - ( f_m,v)_{\Omega} - (\boldsymbol{\nabla} u_m\cdot \boldsymbol{n},v )_{\Gamma} +\mathcal{A}_{\Gamma}(u_m-u_{D,m}) = \mathcal{E}_m,
\end{equation}
where the left-hand side defines the classical Aubin-Dirichlet variational form with $f_0=f$ and $f_m=0$ for $m \ge 1$, while the right-hand side introduces asymptotic error terms with $\mathcal{E}_0=0$ and
\begin{equation}\label{eq:asymptotics_cbm_D_err_2}
\mathcal{E}_m = -\sum_{\ell = 0}^{m-1} (-\gamma)^{-(m-\ell)}\mathcal{A}_{\Gamma}(u_{\ell}-u_{D,\ell}), \quad \text{for} \quad m\ge 1.
\end{equation}
\end{proposition}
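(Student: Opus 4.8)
The plan is to mirror exactly the argument already used for the Nitsche-Robin Dirichlet proposition, exploiting the fact -- noted in the text -- that the Aubin form is formally identical to the Nitsche form except that the penalty operator $\mathcal{N}_{\Gamma}$ is replaced everywhere by $\mathcal{A}_{\Gamma}$, which drops the $-\boldsymbol{\nabla} v\cdot\boldsymbol{n}$ test-function term. First I would substitute the ansatz $u = u_0 + \varepsilon u_1 + \sum_{m>1}\varepsilon^m u_m$ from \eqref{eq:robin_D_expansion} into the recast Aubin-Robin form \eqref{eq:variational_R_cbmA}, then clear the denominator by multiplying through by $\gamma+\varepsilon$. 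This turns the two Robin contributions into $\gamma\,\mathcal{A}_{\Gamma}(u-u_{RD})$ and $\varepsilon\gamma\,\mathcal{A}_{\Gamma}(\boldsymbol{\nabla} u\cdot\boldsymbol{n}-q_{RN})$, and multiplies the domain and the first boundary term by $(\gamma+\varepsilon)$. The core step is then to collect the coefficient of each power $\varepsilon^m$ and set it to zero term by term, using linearity of $\mathcal{I}_{\Omega}$ and $\mathcal{A}_{\Gamma}$ in their arguments.

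The key bookkeeping is to verify that, at order $\varepsilon^m$, the naturally arising penalty contribution reorganizes into $\gamma\,\mathcal{A}_{\Gamma}(u_m - u_{D,m})$ on the left, where $u_{D,m}$ is precisely the Dirichlet datum prescribed in \eqref{eq:Robin_D_O0}--\eqref{eq:Robin_D_Om}, namely $u_{D,0}=u_{RD}$, $u_{D,1}=q_{RN}-\partial_n u_0$, and $u_{D,m}=-\partial_n u_{m-1}$ for $m>1$. The $\varepsilon\gamma\,\mathcal{A}_{\Gamma}(\partial_n u - q_{RN})$ term is what feeds the $q_{RN}$ and $-\partial_n u_{\ell}$ pieces into the definition of $u_{D,m}$ at the shifted order; tracking this shift carefully is where the recursion is generated. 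After dividing by $\gamma$ to match the stated normalization, the residual contributions from lower orders $\ell < m$ accumulate into the right-hand side error term, and I expect them to collapse -- via a telescoping or geometric-type identity in powers of $(-\gamma)^{-1}$ -- into exactly $\mathcal{E}_m = -\sum_{\ell=0}^{m-1}(-\gamma)^{-(m-\ell)}\mathcal{A}_{\Gamma}(u_\ell - u_{D,\ell})$, with $\mathcal{E}_0 = 0$ since the zeroth order has no lower modes to contribute. Strong consistency of each cascade problem with \eqref{eq:Robin_D_O0}--\eqref{eq:Robin_D_Om} then follows, as in the Nitsche case, by integrating the domain term $\mathcal{I}_{\Omega}(u_m)$ by parts and invoking the boundary conditions satisfied by the exact modes.

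Finally I would address coercivity. Here the Aubin case is in fact easier than the Nitsche case: the bilinear form on the left of \eqref{eq:asymptotics_cbm_D_2} is $\mathcal{I}_{\Omega}(u_m) - (\boldsymbol{\nabla} u_m\cdot\boldsymbol{n}, v)_{\Gamma} + \gamma^{-1}(u_m,v)_{\Gamma}$, which is the standard Aubin-Dirichlet penalty form. Its coercivity for a suitable choice of the penalty parameter $\gamma^{-1}$ follows from the classical trace-inequality argument already cited in the paper (the bound $\|h^{1/2}\boldsymbol{\nabla} u\|_{0,\Gamma}\le C_{\mathrm I}\|\boldsymbol{\nabla} u\|_{0,\Omega}$ used to absorb the single gradient boundary term), so I would simply invoke that estimate and the references \cite{Aubin1970,Ern2004}. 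The main obstacle, as in the Nitsche proof, is purely the algebraic recursion: one must confirm that the shifted $\varepsilon\gamma$ term lands the correct datum into $u_{D,m}$ at each order and that the accumulated lower-order residuals sum precisely to the stated $\mathcal{E}_m$, rather than to some off-by-one variant. Everything else is a transcription of the already-proven Nitsche argument with $\mathcal{N}_{\Gamma}\mapsto\mathcal{A}_{\Gamma}$.
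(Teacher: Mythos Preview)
Your proposal is correct and follows essentially the same approach as the paper: the paper's proof simply states that the consistency argument is identical to that of the Nitsche-Robin Dirichlet proposition (with $\mathcal{N}_{\Gamma}$ replaced by $\mathcal{A}_{\Gamma}$), and that coercivity reduces to the classical Aubin-Dirichlet penalty estimate. Your write-up in fact supplies more of the bookkeeping detail than the paper does, but the strategy---insert the ansatz, multiply by $\gamma+\varepsilon$, match powers of $\varepsilon$, and appeal to the standard trace-inequality coercivity argument---is the same.
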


\begin{proof}
The proof of the consistency is identical to the one of Proposition 4.1 (not reported for brevity). The coercivity part of the proof is identical to the classical coercivity proof
for the Aubin-Dirichlet method.
\end{proof}

Similarly, in the Neumann asymptotic limit, we have the following equivalent result.

\begin{proposition}[AP property of the Aubin-Robin method in the Neumann limit $\varepsilon^{-1}\rightarrow 0$]
The asymptotic approximations obtained when $\varepsilon^{-1} \rightarrow 0$ from the full Robin problem in \eqref{eq:variational_R_cbmA} in the finite element space $\mathcal{V}^P_h$ for the asymptotic expansion modes $u_m$ in \eqref{eq:robin_N_expansion} define a cascade of variational problems which are all coercive and consistent with \eqref{eq:Robin_N_O0} through \eqref{eq:Robin_N_Om}. In particular, the $m$'th order can be obtained from the previous ones as the solution of 
\begin{equation}\label{eq:asymptotics_cbm_N_2}
\mathcal{I}_{\Omega}(u_m) - (f_m,v)_{\Omega} - (q_{N,m} ,v)_{\Gamma}  = \mathcal{E}_m,
\end{equation}
where the left-hand side defines a Neumann variational form with an additional penalty with $f_0=f$ and $f_m=0$ for $m\ge 1$, while the right-hand side introduces asymptotic error terms as $\mathcal{E}_0=0$ and
\begin{equation}\label{eq:asymptotics_cbm_N_err_2}
\mathcal{E}_m = \sum_{\ell = 0}^{m-1} (-\gamma)^{(m+1-\ell)}\mathcal{A}_{\Gamma}(\boldsymbol{\nabla} u_{\ell}\cdot \boldsymbol{n} -q_{N,\ell}),\quad\text{ for }\quad m\ge 1.
\end{equation}
\end{proposition}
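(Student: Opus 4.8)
The plan is to follow the derivation of Proposition 4.2 verbatim, replacing the Nitsche functional $\mathcal{N}_{\Gamma}$ by the Aubin functional $\mathcal{A}_{\Gamma}$ everywhere. First I would substitute the Neumann ansatz \eqref{eq:robin_N_expansion} into the Aubin--Robin form \eqref{eq:variational_R_cbmA} and multiply the whole equation by $(\varepsilon^{-1}\gamma+1)=(\gamma+\varepsilon)/\varepsilon$. This clears the $1/(\gamma+\varepsilon)$ denominators: the two weights $\gamma/(\gamma+\varepsilon)$ and $\varepsilon\gamma/(\gamma+\varepsilon)$ become $\gamma\varepsilon^{-1}$ and $\gamma$, while the volume and normal-flux terms $\mathcal{I}_{\Omega}(u)-(f,v)_{\Omega}-(\boldsymbol{\nabla} u\cdot\boldsymbol{n},v)_{\Gamma}$ pick up the factor $(\gamma\varepsilon^{-1}+1)$. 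I would then expand every term in powers of $\varepsilon^{-1}$ and read off the coefficient of each $\varepsilon^{-m}$.

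The combinatorial core is the same as in the Nitsche case: at order $\varepsilon^{-m}$ the scaled volume terms contribute $\mathcal{I}_{\Omega}(u_m)+\gamma\,\mathcal{I}_{\Omega}(u_{m-1})$ together with the matching source pieces, and the boundary terms contribute $\mathcal{A}_{\Gamma}$-weighted residuals. Proceeding by induction on $m$ and using the lower-order balances to re-express the $\gamma\,\mathcal{I}_{\Omega}(u_{m-1})$ contribution, I expect the coefficient of $\varepsilon^{-m}$ to collapse exactly to \eqref{eq:asymptotics_cbm_N_2} with the recursive right-hand side \eqref{eq:asymptotics_cbm_N_err_2}. As checkpoints, the order $m=0$ returns the Neumann problem \eqref{eq:Robin_N_O0} with $\mathcal{E}_0=0$, and $m=1$ returns \eqref{eq:Robin_N_O1} with $\mathcal{E}_1=(-\gamma)^{2}\mathcal{A}_{\Gamma}(\boldsymbol{\nabla} u_0\cdot\boldsymbol{n}-q_{N,0})$. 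The one structural difference from Proposition 4.2 is that, since $\mathcal{A}_{\Gamma}$ carries no $\boldsymbol{\nabla} v\cdot\boldsymbol{n}$ factor, the symmetrizing penalty $-\gamma(\boldsymbol{\nabla} u_m\cdot\boldsymbol{n}-q_{N,m},\boldsymbol{\nabla} v\cdot\boldsymbol{n})_{\Gamma}$ appearing in \eqref{eq:asymptotics_cbm_N} is simply absent here, leaving the bare Neumann form $\mathcal{I}_{\Omega}(u_m)-(f_m,v)_{\Omega}-(q_{N,m},v)_{\Gamma}$ on the left.

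For consistency I would integrate $\mathcal{I}_{\Omega}(u_m)$ by parts into $(-\boldsymbol{\nabla}^2 u_m+\alpha u_m-f_m,v)_{\Omega}+(\boldsymbol{\nabla} u_m\cdot\boldsymbol{n}-q_{N,m},v)_{\Gamma}$. The volume residual vanishes because the exact modes solve \eqref{eq:Robin_N_O0}--\eqref{eq:Robin_N_Om}, and the boundary residual $\boldsymbol{\nabla} u_m\cdot\boldsymbol{n}-q_{N,m}$ vanishes by the Neumann data of the same equations; this last cancellation simultaneously kills every $\mathcal{A}_{\Gamma}(\boldsymbol{\nabla} u_{\ell}\cdot\boldsymbol{n}-q_{N,\ell})$ in $\mathcal{E}_m$, so each discrete mode problem is exactly solved by its continuous counterpart.

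The coercivity step is easier than in Proposition 4.2. Dropping the symmetrizing term removes the sign-indefinite boundary contribution $-\gamma(\boldsymbol{\nabla} u_m\cdot\boldsymbol{n},\boldsymbol{\nabla} u_m\cdot\boldsymbol{n})_{\Gamma}$, so the bilinear form reduces to $\mathcal{I}_{\Omega}(\cdot)$, i.e. $(\boldsymbol{\nabla} u_m,\boldsymbol{\nabla} u_m)_{\Omega}+\alpha(u_m,u_m)_{\Omega}$, whose $H^1$-coercivity (for $\alpha>0$, guaranteed by the reaction term) is classical and needs neither an inverse trace inequality nor a smallness condition on $\gamma$. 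I therefore anticipate no real obstacle beyond careful bookkeeping of the $\varepsilon^{-1}$ powers and the inductive use of the lower-order equations to reproduce the precise coefficients $(-\gamma)^{m+1-\ell}$ in \eqref{eq:asymptotics_cbm_N_err_2}, exactly as for the Nitsche--Robin Neumann limit.
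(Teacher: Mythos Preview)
Your proposal is correct and mirrors the paper's own proof, which simply states that the argument is identical to that of Proposition~4.2 with $\mathcal{N}_{\Gamma}$ replaced by $\mathcal{A}_{\Gamma}$, and that coercivity reduces to the standard Neumann case. Your observation that the absence of the $\boldsymbol{\nabla} v\cdot\boldsymbol{n}$ term in $\mathcal{A}_{\Gamma}$ eliminates the symmetrizing penalty and hence the need for a trace inequality is exactly the point the paper alludes to when it says ``the coercivity part of the proof is identical to the standard Neumann method.''
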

\begin{proof}
The proof is identical to the one of Proposition 4.2 (not reported for brevity). The coercivity part of the proof is identical to the standard Neumanm method. 
\end{proof}

\subsubsection{On the stabilization parameter}
To conclude this section, we stress the impact of the stabilization parameter $\gamma$ on the stability and consistency order. In particular, the last proposition shows that for the Neumann limit to be coercive, the natural choice would be to set \begin{equation}\label{eq:gamma} 
\gamma = C_{\gamma} h,\quad C_{\gamma} C_{\mathrm{I}} \le 1 - C,
\end{equation}
for some strictly positive $C > 0$. This is somewhat in line with the fact that $\gamma^{-1}$ plays the role of the usual stabilization parameter in the Nitsche terms, $N_{\Gamma}$, and \eqref{eq:gamma} corresponds to the classical $h^{-1}$ scaling for this parameter, see \cite{Benzaken2022} and references therein.

As a side remark, the above choice -- which is standard for Nitsche's method -- affects the magnitude of the error terms in Proposition 4.1. In particular, note that \eqref{eq:asymptotics_cbm_D_err} involves scaling constants of the order $\gamma^{-m}$ which reduce the formal consistency with the higher order asymptotics by a factor of $h^{-m}$. This is of course only relevant when one is interested in reproducing higher ($\ge 2$) asymptotic modes.

\section{Embedded discrete equations via shifted boundary polynomial corrections}\label{sec:poisson_sbm}

In the following, we switch to a non-conformal mesh, whereupon we will solve \eqref{eq:Poisson} numerically. As discussed in Section \ref{sec:mappings}, we will thus work on a surrogate domain, $\overline{\Omega}$, with a surrogate boundary $\overline{\Gamma}$, split into Dirichlet, Neumann, and Robin boundaries, $\overline{\Gamma}_D$, $\overline{\Gamma}_N$, and $\overline{\Gamma}_R$, respectively (recall that the subscript $h$ is omitted for simplicity). The definition of the appropriate boundary conditions is obtained via the mapping in \eqref{eq:Gamma-map}. We thus define a weak form of the problem on the surrogate domain which can be abstractly written as
\begin{equation}
    a \langle u,v \rangle = b\langle v \rangle,  \addtocounter{equation}{1} \setcounter{lettercounter}{1}\tag{\theequation \alph{lettercounter}} \label{eq:weak_sbm_1}
\end{equation}
where the bi-linear and linear operators can be expanded, respectively, as
\begin{equation}
    a\langle u,v \rangle = a\langle u,v \rangle_{\overline{\Omega}} + a\langle u,v \rangle_{\overline{\Gamma}_D} + a \langle u,v \rangle_{\overline{\Gamma}_N} + a \langle u,v \rangle_{\overline{\Gamma}_R}, \addtocounter{equation}{0} \addtocounter{lettercounter}{1}\tag{\theequation \alph{lettercounter}} \label{eq:weak_sbm_bilinear} 
\end{equation}
\begin{equation}
    b\langle v \rangle = b\langle v \rangle_{\overline{\Omega}} + b\langle v \rangle_{\overline{\Gamma}_D} + b \langle v \rangle_{\overline{\Gamma}_N} + b\langle v \rangle_{\overline{\Gamma}_R},  \addtocounter{equation}{0} \addtocounter{lettercounter}{1}\tag{\theequation \alph{lettercounter}} \label{eq:weak_sbm_linear_1} 
\end{equation}
where 
\begin{equation}\label{eq:volume_var_terms}
a\langle u,v \rangle_{\overline{\Omega}} = (\boldsymbol{\nabla} u, \boldsymbol{\nabla} v)_{\overline{\Omega}}+ \alpha  ( u,  v)_{\overline{\Omega}},  \addtocounter{equation}{0} \addtocounter{lettercounter}{1}\tag{\theequation \alph{lettercounter}} 
\end{equation}
\begin{equation}\label{eq:volume_var_terms_2}
    b\langle v \rangle_{\overline{\Omega}} = (f, v)_{\overline{\Omega}}.  \addtocounter{equation}{0} \addtocounter{lettercounter}{1}\tag{\theequation \alph{lettercounter}} 
\end{equation}
To close the problem above, we need to specify boundary data on $\overline{\Gamma}$ as 
\begin{align}
    \overline{u}_D    &= u(\overline{\boldsymbol{x}}),                                          & \text{for} \quad  \overline{\boldsymbol{x}} \in \overline{\Gamma}_D, \addtocounter{equation}{0} \addtocounter{lettercounter}{1}\tag{\theequation \alph{lettercounter}} \\
    \overline{q}_N    &= \boldsymbol{\nabla} u(\overline{\boldsymbol{x}}) \cdot \overline{\boldsymbol{n}},   &  \text{for} \quad  \overline{\boldsymbol{x}} \in \overline{\Gamma}_N, \addtocounter{equation}{0} \addtocounter{lettercounter}{1}\tag{\theequation \alph{lettercounter}} \\
    \overline{u}_{RD} &= u(\overline{\boldsymbol{x}}),                                          & \text{for} \quad  \overline{\boldsymbol{x}} \in \overline{\Gamma}_R, \addtocounter{equation}{0} \addtocounter{lettercounter}{1}\tag{\theequation \alph{lettercounter}} \\
    \overline{q}_{RN} &= \boldsymbol{\nabla} u(\overline{\boldsymbol{x}}) \cdot \overline{\boldsymbol{n}},   & \text{for} \quad  \overline{\boldsymbol{x}} \in \overline{\Gamma}_R. \addtocounter{equation}{0} \addtocounter{lettercounter}{1}\tag{\theequation \alph{lettercounter}}
\end{align}

We are now left with the challenge of how to define the surrogate boundary data above, given the data assigned on the true boundary $\Gamma$: $({u}_D, {q}_N, {u}_{RD}, {q}_{RN})$ in \eqref{eq:Poisson}. Using these values with no modification would result in a $\mathcal{O}(\| \boldsymbol{d} \|)$ error, thus limiting the method to first order.

\subsection{Shifted boundary polynomial corrections}

As discussed in the introduction, several approaches exist to correct this error between the two boundaries. In this work, we follow the shifted boundary approach introduced by Main \& Scovazzi for elliptic and parabolic problems \cite{Main2018a,Main2018b}, and later extended to various other applications \cite{Song2018,Li2020,Atallah2020a,Atallah2021a,Colomes2021,Li2021,Carlier2022}. In the original SBM formulation, a second-order accurate truncated Taylor expansion was introduced to match the data on boundaries. This approach has been shown to have a lot of potential in providing high-order approximations in embedded domains, avoiding classical drawbacks such as small cut-cell problems encountered in some cut-FEM methods or consistency limitations in immersed boundaries. The SBM only requires the knowledge of the mapping in \eqref{eq:Gamma-map} to enforce the given conditions while preserving the accuracy of the underlying finite element discretization. Some extensions beyond the second order have been proposed, initially in \cite{Nouveau2019}, and more recently in \cite{Ciallella2022,Atallah2022,Collins2023}. The strategy to correct the boundary data is the same in the high-order case: use a Taylor series development truncated at the proper order. As the accuracy increases, the number of derivative terms to be included in the series becomes quite large and cumbersome, especially in three spatial dimensions. A simplified formulation of the same method, avoiding the evaluation of all the Taylor basis terms at quadrature points, has been introduced in \cite{Ciallella2022}. We use the same one here and extend it to the case of Neumann and Robin boundary conditions. Despite being equivalent to the Taylor series-based SBM, the formulation proposed in \cite{Ciallella2022} involves a simple way to perform extrapolation using the underlying finite element approximation. In the following sections, we elaborate on the details of this technique.

 \subsubsection{Polynomial correction on the Dirichlet boundary}\label{sec:poly_cor_dir} 

Following the SBM ideas, we start by matching the Dirichlet data, $ u_D(\boldsymbol{x})$, with a $k$-th order multivariate Taylor series expansion of $u(\boldsymbol{\overline{x}})$ in a neighborhood of $\overline{\boldsymbol{x}} \in \overline{\Gamma}$. We consider at first the Nitsche's formulation. The adaptation of the same ideas to the penalty method is recalled later. In multinomial notation, this leads to
\begin{equation}\label{eq:SBM_D_first}
    u_D(\boldsymbol{x}) = u(\boldsymbol{\overline{x}}) + \mathcal{D}^k_D(\boldsymbol{d}) + \mathcal{O}(\| \boldsymbol{d}(\overline{\boldsymbol{x}}) \|^{k+1}), \quad \text{where} \quad \mathcal{D}^k_D = \sum_{\alpha = 1}^k \frac{1}{\alpha !}   \left ( \boldsymbol{\nabla}^{\alpha}  \right) \left ( u \right )(\boldsymbol{\overline{x}}) \boldsymbol{d}(\boldsymbol{\overline{x}})^{\alpha},
\end{equation}
where we recall that $\boldsymbol{d}=\boldsymbol{x}-\boldsymbol{\overline{x}}$ and $\mathcal{O}(\| \boldsymbol{d}(\overline{\boldsymbol{x}}) \|^{k+1})$ is the error associated with the truncated series. Neglecting this error, the above relation provides a high-order definition of the Dirichlet data on the surrogate boundary as
\begin{equation}\label{eq:polynomial_correction_dbc_1}
     \overline{u}_D = u(\boldsymbol{\overline{x}}) = u_D(\boldsymbol{x}) - \mathcal{D}^k_D.
\end{equation}
This is the original SBM idea studied in, e.g., \cite{Atallah2022,Collins2023} in the high-order case. In particular, as we will shortly see that \eqref{eq:polynomial_correction_dbc_1} can be inserted directly in the standard Nitsche formulation to obtain a modified variational form, which, on unfitted meshes, is $(k+1)$'th order accurate. That is, without the need to compute any mesh intersections or define any cut-cells. Thus avoiding many of the issues associated with these cells. Although not strictly necessary, symmetrizing terms can also be added to improve the formal stability of the method \cite{Main2018a,Nouveau2019,Collins2023}. In this work, we use a different view of the same method, which follows \cite{Ciallella2022}. The key idea introduced in the reference is to recognize that the expression
\begin{equation}
    p(\boldsymbol{x}) = u(\boldsymbol{\overline{x}}) + \mathcal{D}^k_D(\boldsymbol{x}-\boldsymbol{\overline{x}}),
\end{equation}
is simply the expansion of the polynomial $p$ onto the local Taylor basis $(1, x-\overline{x},y-\overline{y}, (x-\overline{x})^2/2, (y-\overline{y})^2/2, (x-\overline{x}) (y-\overline{y}), \text{etc.})$. The coefficients in this expansion are the derivatives of $u$, which are recovered from the underlying finite element polynomial in the standard SBM implementation, evaluated at $\boldsymbol{\overline{x}}$. The point-wise value obtained with this expansion is clearly the same as the one provided by any other basis. With this, there is no reason to change the basis at each quadrature point, and one can use all along the underlying finite element polynomial expansion used in the volume terms. In particular, for a polynomial of degree $k$, the Taylor term can be simply expressed as 
\begin{equation}
\mathcal{D}^k_D =    u(\boldsymbol{x}) - u(\boldsymbol{\overline{x}}). 
\end{equation}
This provides the following expression for the corrected value for the Dirichlet data on the surrogate boundary
\begin{equation}\label{eq:SPM_D_second}
    \overline{u}_D = u(\boldsymbol{\overline{x}}) +  u_D(\boldsymbol{x}) - u(\boldsymbol{x}).
\end{equation}
So, the SBM correction to match the boundary data can be written as the difference between the finite element cell polynomial evaluated on the true boundary and the data itself. The use of the available polynomial expansions avoids the cumbersome re-evaluations of all high-order derivatives. Following the original SBM idea, we now consider the standard Nitsche Dirichlet variational form with the modified data \eqref{eq:SPM_D_second}, which -- using the notation $\boldsymbol{x}=\mathcal{M}(\overline{\boldsymbol{x}})$ -- gives
\begin{equation}
\begin{split}
    a\langle u,v \rangle_{\overline{\Gamma}_D} &= \gamma^{-1} (u(\boldsymbol{x}),v)_{\overline{\Gamma}_D} - (\boldsymbol{\nabla} u \cdot \overline{\boldsymbol{n}},v)_{\overline{\Gamma}_D}  -(u(\boldsymbol{x}),\boldsymbol{\nabla} v \cdot \overline{\boldsymbol{n}})_{\overline{\Gamma}_D}, \\
    b\langle v \rangle_{\overline{\Gamma}_D} &= \gamma^{-1} ({u}_D,v)_{\overline{\Gamma}_D}-(u_D,\boldsymbol{\nabla} v \cdot \overline{\boldsymbol{n}})_{\overline{\Gamma}_D}.
\end{split}
\end{equation}
Note that if $u(\boldsymbol{x})$ is replaced by $u(\boldsymbol{\overline{x}}) + \mathcal{D}_D^k$ with $\mathcal{D}_D^k$ given by \eqref{eq:SBM_D_first}, we obtain the usual SBM formulation, e.g., see \cite{Atallah2022}. However, additional terms are added to enhance the symmetry of $a\langle u,v \rangle_{\overline{\Gamma}_D} $. In the classical SBM formulation, these terms involve the Taylor series terms applied to the test function. The symmetrized form can be written as 
\begin{equation}\label{eq:SBM_Ds}
\begin{split}
    a\langle u,v \rangle_{\overline{\Gamma}_D} &= \gamma^{-1} ( u(\boldsymbol{x}),v(\boldsymbol{x}))_{\overline{\Gamma}_D} - (\boldsymbol{\nabla} u \cdot \overline{\boldsymbol{n}},v)_{\overline{\Gamma}_D}  -(u(\boldsymbol{x}),\boldsymbol{\nabla} v \cdot \overline{\boldsymbol{n}})_{\overline{\Gamma}_D}, \\
    b\langle v \rangle_{\overline{\Gamma}_D} &= \gamma^{-1} ({u}_D,v(\boldsymbol{x}))_{\overline{\Gamma}_D}-(u_D,\boldsymbol{\nabla} v \cdot \overline{\boldsymbol{n}})_{\overline{\Gamma}_D}.
\end{split}
\end{equation}
Note that, for a full Dirichlet problem, the resulting method is exactly the same as the one of \cite{Atallah2022}, only its implementation is different as it involves a single polynomial kernel instead of a different Taylor basis and coefficients in each quadrature point on $\overline{\Gamma}_D$. Stability and error estimates can be found in \cite{Atallah2022}.

{\underline{\textit{For the Aubin method}}}: The use of the polynomial correction in the embedded Dirichlet terms leads to
\begin{equation}
\begin{split}
    a\langle u,v \rangle_{\overline{\Gamma}_D} &= \gamma^{-1} (u(\boldsymbol{x}),v)_{\overline{\Gamma}_D} - (\boldsymbol{\nabla} u \cdot \overline{\boldsymbol{n}},v)_{\overline{\Gamma}_D}, \\
    b\langle v \rangle_{\overline{\Gamma}_D} &= \gamma^{-1} ({u}_D,v)_{\overline{\Gamma}_D}.
\end{split}
\end{equation}

\subsubsection{Polynomial correction on the Neumann boundary}\label{sec:poly_cor_neu} 

Neumann boundaries are treated in a similar way. However, in this case, the boundary condition also depends on the normal, which must be accounted for in the correction. We consider the vectors
\begin{equation}
\overline{\boldsymbol{n}}=\overline{\boldsymbol{n}}(\boldsymbol{\overline{x}}), \quad 
\boldsymbol{n} = \boldsymbol{n}(\boldsymbol{x}) = \boldsymbol{n}(\boldsymbol{x}), \quad
 \boldsymbol{t} = \boldsymbol{t}(\boldsymbol{x}) = \boldsymbol{t}(\boldsymbol{x}),
\end{equation}
that represents the unit normal vector to the surrogate boundary and the corresponding perpendicular unit normal and tangent vectors to the true boundary. We then introduce the decomposition $\overline{\boldsymbol{n}}  = \boldsymbol{n}(\overline{\boldsymbol{n}} \cdot \boldsymbol{n}) + \boldsymbol{t}(\overline{\boldsymbol{n}} \cdot \boldsymbol{t})$
and write that
 \begin{equation}\label{eq:polynomial_correction_nbc_3}
    \overline{q}_N  = (\overline{\boldsymbol{n}} \cdot \boldsymbol{n}) \boldsymbol{\nabla} u(\overline{\boldsymbol{x}}) \cdot \boldsymbol{n}  + (\overline{\boldsymbol{n}} \cdot \boldsymbol{t})  \boldsymbol{\nabla} u(\overline{\boldsymbol{x}}) \cdot \boldsymbol{t}. 
\end{equation}
In the second term, we modify using the true Neumann data. As for the Dirichlet conditions, e.g., see \cite{Main2018a}, we introduce a Taylor series of the gradient in the fixed normal direction, $ \boldsymbol{n}$, which is matched with the true Neumann data as
\begin{equation}
    q_N(\boldsymbol{x}) = \boldsymbol{\nabla} u(\boldsymbol{\overline{x}}) \cdot \boldsymbol{n} + \mathcal{D}^k_N, \quad \text{where} \quad \mathcal{D}^k_N = \sum_{\alpha = 1}^k \frac{1}{\alpha !}   \left ( \boldsymbol{\nabla}^{\alpha}  \right) \left (\boldsymbol{\nabla} u \cdot \boldsymbol{n} \right )(\boldsymbol{\overline{x}}) \boldsymbol{d}(\boldsymbol{\overline{x}})^{\alpha}.
\end{equation}
As discussed in \cite{Nouveau2019}, one should be careful about how the derivative terms in the expansion above are obtained. If they are evaluated from a finite element cell polynomial of degree $k$, the $k$'th derivative is not consistent. Unless something special is done to recover this, the above development can only be performed to degree $k-1$. We use the identity 
\begin{equation}
 \mathcal{D}^k_N = \boldsymbol{\nabla} u(\boldsymbol{x}) \cdot \boldsymbol{n} - \boldsymbol{\nabla} u(\boldsymbol{\overline{x}}) \cdot \boldsymbol{n},
 \end{equation}
 and combine it with \eqref{eq:polynomial_correction_nbc_3}. After a few manipulations, we obtain
  \begin{equation}\label{eq:polynomial_correction_nbc_4}
    \overline{q}_N  
 = \boldsymbol{\nabla} u(\boldsymbol{\overline{x}}) \cdot \overline{\boldsymbol{n}} +  (\overline{\boldsymbol{n}} \cdot \boldsymbol{n}) (q_N(\boldsymbol{x}) - \boldsymbol{\nabla} u(\boldsymbol{x}) \cdot \boldsymbol{n}).
\end{equation}

Replacing this simple expression in the bilinear form and rearranging, we get
\begin{equation}\label{eq:SBM_Neumann}
\begin{split}
    a \langle u,v \rangle_{\overline{\Gamma}_N} &=  - ( \boldsymbol{\nabla} u(\boldsymbol{\overline{x}}) \cdot \overline{\boldsymbol{n}} ,v)_{\overline{\Gamma}_N} +   (\boldsymbol{\nabla} u(\boldsymbol{x}) \cdot \boldsymbol{n},(\overline{\boldsymbol{n}} \cdot \boldsymbol{n}) v)_{\overline{\Gamma}_N},\\
     b \langle v \rangle_{\overline{\Gamma}_N} &=  (q_N, (\overline{\boldsymbol{n}} \cdot \boldsymbol{n}) v)_{\overline{\Gamma}_N}.
\end{split}
\end{equation}

This modification corresponds to a correction term based on the difference between the Neumann data and the cell normal gradient of the finite element polynomial evaluated on the true boundary $\Gamma$. Note that in the current formulation, we obtain a sub-optimal method with consistency at most $\mathcal{O}(h^k)$ in the $L^2$ norm for an approximation of degree $k$. This is linked to the approximation accuracy of the first derivatives of the finite element polynomial.

\section{Asymptotic preserving Robin formulations: one implementation for all embedded boundaries}\label{sec:cons_robin} 

For the Robin conditions, we examine the idea of constructing AP approximations that can be used for a single high-order implementation for all types of boundary conditions. Thus, we focus on the full Robin problem in \eqref{eq:full_robin}, and start from the conformal weak formulation in Nitsche form given by \eqref{eq:variational_R_cbm}. With the notation of Section \ref{sec:AP_Robin}, we define the Dirichlet form and Neumann form (with added penalty as in Proposition 4.2) as
\begin{equation}
D := \mathcal{I}_{\Omega}(u) -(\partial_n u,v)_{\Gamma} -(f,v)_{\Omega}+\mathcal{N}_{\Gamma}(u-u_{RD}),
\end{equation}
\begin{equation}
N :=  \mathcal{I}_{\Omega}(u) -(\partial_n u,v)_{\Gamma} -(f,v)_{\Omega}+\mathcal{N}_{\Gamma}(\partial_n u-q_{RN}).
\end{equation}

As discussed in \cite{Juntunen2009} it is easy to see that \eqref{eq:variational_R_cbm} can be obtained as the linear combination
\begin{equation}
\dfrac{\gamma}{\gamma+\varepsilon}D + \dfrac{\varepsilon}{\gamma+\varepsilon} N =0.
\end{equation}
It is quite tempting to perform a similar combination for the embedded case to obtain a SBM Robin formulation. Unfortunately, this may lead to an inconsistent method. To see this, let us set
\begin{equation}\label{eq:overline_N}
\mathcal{N}_{\overline{\Gamma}}(u) := (u,v(\boldsymbol{x})/\gamma - \partial_{\bar{n}}v(\overline{\boldsymbol x})).
\end{equation}
Using \eqref{eq:SBM_Ds} and \eqref{eq:SBM_Neumann}, we define the embedded Robin problem from 
\begin{equation}
\overline{D} := \mathcal{I}_{\overline{\Omega}}(u) -(\partial_{\bar{n}} u(\overline{\boldsymbol x}),v)_{\overline{\Gamma}} -(f,v)_{\overline{\Omega}}+\mathcal{N}_{\overline{\Gamma}}(u(\boldsymbol{x})-u_{RD}),
\end{equation}
and  
\begin{equation}
\overline{N}:=  \mathcal{I}_{\overline{\Omega}}(u) -(\partial_{\bar{n}} u(\overline{\boldsymbol x}),v)_{\overline{\Gamma}} -(f,v)_{\overline{\Omega}}+
\gamma\mathcal{N}_{\overline{\Gamma}}((\overline{\boldsymbol{n}} \cdot \boldsymbol{n})(\partial_n u(\boldsymbol{x})-q_{RN})).
\end{equation}
Quick computations show that the method defined by
\begin{equation}
\dfrac{\gamma}{\gamma+\varepsilon}\overline{D} + \dfrac{\varepsilon}{\gamma+\varepsilon} \overline{N} = 0,
\end{equation}
leads to the variational formulation  
\begin{equation}\label{eq:SBM_Robin_nope}
 \mathcal{I}_{\overline{\Omega}}(u) -(\partial_{\bar{n}} u(\overline{\boldsymbol x}),v)_{\overline{\Gamma}} -(f,v)_{\overline{\Omega}} 
+\dfrac{\gamma}{\gamma+\varepsilon}  \mathcal{N}_{\overline{\Gamma}}(u(\boldsymbol{x})-u_{RD})\\
+\dfrac{\gamma\varepsilon }{\gamma+\varepsilon} \mathcal{N}_{\overline{\Gamma}}((\overline{\boldsymbol{n}} \cdot \boldsymbol{n})(\partial_n u(\boldsymbol{x})-q_{RN}))    =0.
\end{equation}
By construction, for $\varepsilon = 0$ this method reduces exactly to \eqref{eq:SBM_Ds}, and -- after some manipulations -- for $\varepsilon^{-1}\rightarrow 0$ to 
\begin{equation}
 \mathcal{I}_{\overline{\Omega}}(u) -(\partial_{\bar{n}} u(\overline{\boldsymbol x}),v)_{\overline{\Gamma}} -(f,v)_{\overline{\Omega}} 
 + (\partial_n u(\boldsymbol{x})-q_{RN},(\overline{\boldsymbol{n}} \cdot \boldsymbol{n})v)_{\overline{\Gamma}}
 - \gamma (\partial_n u(\boldsymbol{x})-q_{RN},(\overline{\boldsymbol{n}} \cdot \boldsymbol{n})\partial_{\bar n}v)_{\overline{\Gamma}} =0,
\end{equation}
which is equal to \eqref{eq:SBM_Neumann} with the extra penalty also found in \eqref{eq:asymptotics_cbm_N}. However, if we compute the Euler-Lagrange equations for general values of $\varepsilon$, we obtain
\begin{equation} 
\begin{split}
(-\boldsymbol{\nabla}^2 u +\alpha u - f,v)_{\overline{\Omega}}  + 
\dfrac{\gamma}{\varepsilon + \gamma} (\underbrace{u(\boldsymbol{x})-u_{RD} + \varepsilon (\overline{\boldsymbol{n}} \cdot \boldsymbol{n})
(\partial_n u(\boldsymbol{x})-q_{RN}) },v(\boldsymbol{x})/\gamma - \partial_{\bar{n}}v(\overline{\boldsymbol x}))_{\overline{\Gamma}} =0, 
\end{split}
\end{equation}
which in general is not consistent, even within a given power of $h$, due to the presence of the $(\overline{\boldsymbol{n}} \cdot \boldsymbol{n})$ product in the under-braced term that depends strongly on the generated mesh topology and true domain shape.

In the following sections, we propose three approaches that correct this problem. These methods will provide Robin formulations that are not only consistent but also enjoy an AP property. Hereby, they can be used to approximate any boundary condition simply by modifying the value of $\varepsilon$.

\subsection{The Nitsche-Robin method based on corrected linear combination coefficients} 

We first consider a simple correction of the coefficients in the linear combination discussed in the previous section. As with the inconsistent approach, we use \eqref{eq:SPM_D_second}   and \eqref{eq:polynomial_correction_nbc_4} to define $\overline{u}_{RD}$ and $\overline{q}_{RN}$, given $u_{RD}$ and $q_{RN}$. However, we now set $\bar{\gamma} =  \bar{n}_n  \gamma $, by introducing the short notation
\begin{equation}\label{eq:scalar_prod}
 \bar{n}_n := \overline{\boldsymbol{n}} \cdot \boldsymbol{n}.
\end{equation}
We now consider the modified variant of \eqref{eq:SBM_Robin_nope} that reads
\begin{equation}\label{eq:SBM_Robin_yes_a1}
 \mathcal{I}_{\overline{\Omega}}(u) -(\partial_{\bar{n}} u(\overline{\boldsymbol x}),v)_{\overline{\Gamma}} -(f,v)_{\overline{\Omega}} 
+  \mathcal{N}_{\overline{\Gamma}}\left ( \dfrac{\bar\gamma}{\bar\gamma+\varepsilon}(u(\boldsymbol{x})-u_{RD}) \right)\\
+ \mathcal{N}_{\overline{\Gamma}} \left (\dfrac{\bar\gamma\varepsilon }{\bar\gamma+\varepsilon}(\partial_n u(\boldsymbol{x})-q_{RN}) \right)    =0.
\end{equation}

The method is equivalent to the combination
\begin{equation}
\dfrac{\bar{\gamma}}{ \bar{\gamma}+\varepsilon}\overline{D} + \dfrac{\varepsilon}{ \bar{\gamma}+\varepsilon} \overline{N} = 0,
\end{equation}
which, however, due to the variability of $\bar{n}_n$, only makes sense when the combination coefficients are finally included in the boundary integrals as done in \eqref{eq:SBM_Robin_yes_a1}. A quick check shows that the limits for $\varepsilon \rightarrow 0$ and $\varepsilon^{-1} \rightarrow 0$ are the same as \eqref{eq:SBM_Robin_nope}. Moreover, the Euler-Lagrange equations in the general case read
\begin{equation}\label{eq:SBM_Robin_yes_a2}
\begin{split}
(-\boldsymbol{\nabla}^2 u +\alpha u - f,v)_{\overline{\Omega}}  +   
 (\underbrace{u(\boldsymbol{x})-u_D + \varepsilon  
(\partial_n u(\boldsymbol{x})-q_{RN}) },\dfrac{\bar\gamma}{\bar\gamma+\varepsilon}(v(\boldsymbol{x})/\gamma - \partial_{\bar{n}}v(\overline{\boldsymbol x})  ))_{\overline{\Gamma}} =0,
\end{split}
\end{equation}
where the under-braced term is consistent with the full Robin boundary condition within the accuracy of the extrapolation. For smooth enough problems, the latter is expected to behave as $\max(\mathcal{O}(h^{k+1}),\min(\varepsilon,C) \mathcal{O}(h^{k}))$ for some  $C>0$. More interestingly, the above approximation is asymptotic preserving, as shown below.

\begin{proposition}[AP property of the corrected coefficients Nitsche-Robin embedded method] The Robin embedded method with shifted boundary polynomial correction defined by \eqref{eq:SBM_Robin_yes_a1} is asymptotic preserving in both the Dirichlet limit $\varepsilon\rightarrow 0$ and in the Neumann limit $\varepsilon^{-1}\rightarrow 0$. In particular, we have that 
\begin{itemize}
\item in the limit $\varepsilon \ll 1$, the discrete modes $\{u_m\}_{m\ge 0}$ expanded as in \eqref{eq:robin_D_expansion} verify the coercive asymptotic variational problems
\begin{equation}\label{eq:AP-SBMa_D_v1}
 \mathcal{I}_{\overline{\Omega}}(u_m) -(\partial_{\bar{n}} u_m(\overline{\boldsymbol x}),v)_{\overline{\Gamma}} -(f_m,v)_{\overline{\Omega}} +  \mathcal{N}_{\overline{\Gamma}}( u_m(\boldsymbol{x})-u_{D,m})= \overline{\mathcal{E}}_m,
\end{equation}
with  $u_{D,m}$ defined by Proposition 2.1, and with $f_0=f$ and $f_m=0$ for $m\ge 1$, and where $\overline{\mathcal{E}}_0 =0$   while 
\begin{equation}\label{eq:AP-SBMa_D1}
\overline{\mathcal{E}}_m :=  -\sum_{\ell = 0}^{m-1} (-\bar\gamma)^{-(m-\ell)}\mathcal{N}_{\overline{\Gamma}}(u_{\ell}(\boldsymbol{x})-u_{D,\ell}),
\quad \forall m\ge 1.
\end{equation}

\item in the limit $\varepsilon^{-1} \ll 1$, discrete modes expanded as in  \eqref{eq:robin_N_expansion} verify the asymptotic variational problems
\begin{equation}\label{eq:AP-SBMa_N_v1}
 \mathcal{I}_{\overline{\Omega}}(u_m) -(\partial_{\bar{n}} u_m(\overline{\boldsymbol x}),v)_{\overline{\Gamma}} -(f_m,v)_{\overline{\Omega}} +  \mathcal{N}_{\overline{\Gamma}}(\bar \gamma ( \boldsymbol{\nabla} u_m(\boldsymbol{x})\cdot\boldsymbol{n} -q_{N,m})  )= \overline{\mathcal{E}}_m,
\end{equation}
with $q_{N,m}$ defined by Proposition 2.2, and with $f_0=f$ and $f_m=0$ for $m\ge 1$, and where $\overline{\mathcal{E}}_0 =0$ while 
\begin{equation}\label{eq:AP-SBMa_N1}
\overline{\mathcal{E}}_m :=     \sum_{\ell = 0}^{m-1} (-\bar\gamma)^{(m+1-\ell)}\mathcal{N}_{\overline{\Gamma}}(\boldsymbol{\nabla} u_{\ell}(\boldsymbol{x})\cdot \boldsymbol{n} -q_{N,\ell}), \quad \forall m\ge 1.
\end{equation}
\end{itemize}
\end{proposition}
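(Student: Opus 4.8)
The plan is to mirror the term-by-term matching used in the proofs of Proposition 4.1 and Proposition 4.2, the only genuine novelty being that the combination coefficient now carries the spatially varying factor $\bar{n}_n=\overline{\boldsymbol{n}}\cdot\boldsymbol{n}$ through $\bar\gamma=\bar n_n\gamma$. Because of this variability I cannot clear the denominator globally by multiplying \eqref{eq:SBM_Robin_yes_a1} by $\bar\gamma+\varepsilon$, as was done in the conformal case; instead I exploit the fact that the coefficients $\bar\gamma/(\bar\gamma+\varepsilon)$ and $\bar\gamma\varepsilon/(\bar\gamma+\varepsilon)$ sit inside the boundary integrals $\mathcal{N}_{\overline{\Gamma}}$, which are linear in their first slot, so the entire argument is carried out pointwise on $\overline{\Gamma}$.

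For the Dirichlet limit I would insert the ansatz \eqref{eq:robin_D_expansion} into \eqref{eq:SBM_Robin_yes_a1} and expand the two coefficients as formal geometric series in $\varepsilon$, namely $\bar\gamma/(\bar\gamma+\varepsilon)=\sum_{j\ge0}(-\bar\gamma)^{-j}\varepsilon^{j}$ and $\bar\gamma\varepsilon/(\bar\gamma+\varepsilon)=\sum_{j\ge1}(-\bar\gamma)^{-(j-1)}\varepsilon^{j}$, which is legitimate at the formal level provided $\bar\gamma>0$, i.e. $\bar n_n$ is bounded away from zero (granted here since the surrogate normal stays close to the true one). Collecting the coefficient of $\varepsilon^m$ is immediate in the volume term $\mathcal{I}_{\overline{\Omega}}$, in the surrogate flux $(\partial_{\bar n}u(\overline{\boldsymbol x}),v)_{\overline{\Gamma}}$, and in the forcing, which contributes only at order zero and thus gives $f_0=f$, $f_m=0$. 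For the two Robin contributions the Cauchy product yields exactly $\mathcal{N}_{\overline{\Gamma}}(u_m(\boldsymbol{x})-u_{D,m})-\overline{\mathcal{E}}_m$ once the continuous recursions $u_{D,0}=u_{RD}$, $u_{D,1}=q_{RN}-\partial_n u_0$ and $u_{D,m}=-\partial_n u_{m-1}$ of Proposition 2.1 are substituted, the residual lower-order sum being precisely \eqref{eq:AP-SBMa_D1}. The Neumann limit is handled identically from \eqref{eq:robin_N_expansion}, expanding the same coefficients in powers of $\varepsilon^{-1}$ and using the recursions $q_{N,0}=q_{RN}$, $q_{N,m}=-u_{m-1}$ of Proposition 2.2 to obtain \eqref{eq:AP-SBMa_N_v1}--\eqref{eq:AP-SBMa_N1}. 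I would check the bookkeeping explicitly on the orders $m=0,1$, after which general $m$ follows by the same reindexing of the Cauchy product.

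It then remains to establish consistency and coercivity of each clean per-order problem, both of which are inherited. The left-hand side of \eqref{eq:AP-SBMa_D_v1} is the embedded Nitsche--Dirichlet form \eqref{eq:SBM_Ds} applied to the mode $u_m$ with data $u_{D,m}$, and that of \eqref{eq:AP-SBMa_N_v1} is the SBM Neumann form \eqref{eq:SBM_Neumann} augmented with the extra penalty of Proposition 4.2; since both reproduce the true-boundary traces to the accuracy of the polynomial extrapolation, they are consistent with \eqref{eq:Robin_D_O0}--\eqref{eq:Robin_D_Om} and \eqref{eq:Robin_N_O0}--\eqref{eq:Robin_N_Om}, respectively. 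Coercivity follows from the corresponding conformal estimates: the Dirichlet cascade uses the classical Nitsche coercivity, while the Neumann cascade reuses the trace-inequality argument of Proposition 4.2 with $\bar\gamma$ in place of $\gamma$, which still gives coercivity under the scaling \eqref{eq:gamma} because $\bar n_n\le 1$ is bounded.

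The step I expect to be the main obstacle is the careful accounting in the Robin Cauchy product: tracking how the $(-\bar\gamma)^{-(m-\ell)}$ factors generated by $\bar\gamma/(\bar\gamma+\varepsilon)$ interact with the order-shifted stream coming from $\bar\gamma\varepsilon/(\bar\gamma+\varepsilon)$, and verifying that, after the continuous recursions are used, the two streams collapse into the single penalty $\mathcal{N}_{\overline{\Gamma}}(u_m(\boldsymbol{x})-u_{D,m})$ plus the stated residual $\overline{\mathcal{E}}_m$. Keeping $\bar\gamma$ inside the integral throughout -- rather than factoring it out, which is only a harmless abuse of notation by linearity of $\mathcal{N}_{\overline{\Gamma}}$ -- is exactly what makes this per-order bookkeeping well defined in the presence of the variable $\bar n_n$, and is the reason the combination coefficients were placed inside $\mathcal{N}_{\overline{\Gamma}}$ in \eqref{eq:SBM_Robin_yes_a1}.
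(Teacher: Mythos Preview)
Your proposal is correct and follows essentially the same approach as the paper: the paper's proof likewise observes that the spatial variability of $\bar\gamma$ prevents clearing the denominator globally, substitutes the Maclaurin expansion $1/(1+x)=\sum_{n\ge0}(-x)^n$ with $x=\varepsilon\bar\gamma^{-1}$ (resp.\ $x=\varepsilon^{-1}\bar\gamma$) inside the boundary integrals, and then matches powers of the small parameter. Your write-up is in fact more explicit about the Cauchy-product bookkeeping than the paper, which simply states that \eqref{eq:AP-SBMa_D_v1} and \eqref{eq:AP-SBMa_N_v1} are ``obtained classically by gathering the terms''; for coercivity of the Dirichlet cascade the paper invokes the SBM analysis of Atallah et al.\ rather than the conformal Nitsche estimate, and it does not claim coercivity in the Neumann limit (note the wording of the proposition), so you need not carry the trace-inequality argument there.
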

\begin{proof}
To obtain the proof, we proceed by inserting the asymptotic developments formally into \eqref{eq:SBM_Robin_yes_a1}. To account for the effect of the $1/(\bar\gamma + \varepsilon)$ term, we cannot multiply the whole equation by it due to its variability in space. Instead, we replace this term by using a Maclaurin series expansion as
\begin{equation}
\dfrac{1}{1+x} =\sum_{n\ge 0}(-x)^n,
\end{equation}
applied to $x = \varepsilon \bar\gamma^{-1}$ in the limit $\varepsilon \ll 1$, and to $ \varepsilon^{-1} \bar\gamma$ in the limit $\varepsilon^{-1} \ll 1$.  Both \eqref{eq:AP-SBMa_D_v1} and \eqref{eq:AP-SBMa_N_v1} are obtained classically by gathering the terms multiplying equal powers of the small parameter. Each asymptotic problem is a Nitsche-type approximation of the appropriate order continuous problem, with error terms proportional to the boundary errors.  In particular, the variational forms \eqref{eq:AP-SBMa_D_v1} are equivalent to the symmetrized Nitsche SBM approximation of the corresponding continuous problems on non-conformal meshes, whose coercivity has been proven in \cite{Atallah2022}.
\end{proof}

As for the inconsistent approach in \eqref{eq:SBM_Robin_nope}, this method allows us to use the surrogate data independently in the Neumann and Dirichlet cases. Yet, the method restores the consistency with the Robin problem for general values of $\varepsilon$ and is asymptotic preserving. However, it involves coefficients depending on the scalar product in \eqref{eq:scalar_prod}, whose value cannot be a priori set. This modifies the weighting of the boundary terms in an uncontrolled -- albeit possibly small -- manner.

Concerning the parameter $\gamma$, the choice $\gamma =C_{\gamma} h$ allows us to ensure the coercivity of the bilinear form in the Neumann asymptotic limit for the conformal case. Unfortunately, stability results are only available for pure Dirichlet boundaries, and the impact of the penalty on this property seems to be relatively important \cite{Burman2012,Collins2023}. Thus, the same definition of $\gamma$ -- as the one given in the conformal case -- seems to be the most appropriate.

For completeness, we report the bi-linear and linear forms corresponding to the corrected scheme, which read
\begin{equation}\label{eq:NR-consistent1-var}
    \begin{split}
    a \langle u,v \rangle_{\overline{\Gamma}_R} &=  - ( \partial_{\bar n}u(\overline{\boldsymbol{x}}), v)_{\overline{\Gamma}_R} +
 \mathcal{N}_{\overline{\Gamma}_R} \left ( \dfrac{\bar\gamma}{\bar\gamma+\varepsilon}u(\boldsymbol{x}) \right)
+ \mathcal{N}_{\overline{\Gamma}_R} \left (\dfrac{\bar\gamma\varepsilon }{\bar\gamma+\varepsilon}\partial_n u(\boldsymbol{x}) \right ),\\
  b \langle v \rangle_{\overline{\Gamma}_R} &=
     \mathcal{N}_{\overline{\Gamma}_R} \left ( \dfrac{\bar\gamma}{\bar\gamma+\varepsilon} u_{RD} \right )+
 \mathcal{N}_{\overline{\Gamma}_R} \left (\dfrac{\bar\gamma\varepsilon }{\bar\gamma+\varepsilon}q_{RN} \right ).
\end{split}
\end{equation}

\subsection{The Nitsche-Robin method using polynomial corrections in the full Robin condition}

We now proceed differently by working directly with the full Robin condition. We modify the following SBM approach discussed in Sections \ref{sec:poly_cor_dir} and \ref{sec:poly_cor_neu} as
\begin{equation}
\begin{split}
u- u_{RD} +\varepsilon(\partial_n u - q_{RN}) = &u(\overline{\boldsymbol{x}}) + \mathcal{D}^k_D -u_{RD} +\mathcal{O}(\|\boldsymbol{d}^{k+1}\|)
+ \varepsilon(\partial_n u(\overline{\boldsymbol{x}})  +\mathcal{D}^k_N - q_{RN}) +\varepsilon\mathcal{O}(\|\boldsymbol{d}^k\|)\\=
&u(\overline{\boldsymbol{x}}) + \mathcal{D}^k_D -u_{RD} +\mathcal{O}(\|\boldsymbol{d}^{k+1}\|)
+ \varepsilon(\partial_{\bar n} u(\overline{\boldsymbol{x}})  +\mathcal{D}^k_N  + \mathcal{D}_N^n - q_{RN} ) +
\varepsilon\mathcal{O}(\|\boldsymbol{d}^k\|),
\end{split}
\end{equation}
having denoted by $\mathcal{D}^k_{D/N}$ the high-order polynomial corrections and set
\begin{equation}
 \mathcal{D}_N^n:= \partial_{ n} u(\overline{\boldsymbol{x}}) -\partial_{\bar n} u(\overline{\boldsymbol{x}}). 
\end{equation}
We now define
\begin{equation}\label{eq:Robin-data}
\overline{u}_{RD} := u_{RD} - \mathcal{D}^k_D,\quad \text{and} \quad
\overline{q}_{RN} := q_{RN}-\mathcal{D}^k_N  -\mathcal{D}_N^n,
\end{equation}
and apply the generalized Nitsche's method on the surrogate domain with this data. Using the polynomial corrections based on the cell finite element expansion, we obtain the problem
\begin{equation}\label{eq:SBM_Robin_yes_b1}
 \mathcal{I}_{\overline{\Omega}}(u) -(\partial_{\bar{n}} u(\overline{\boldsymbol x}),v)_{\overline{\Gamma}} -(f,v)_{\overline{\Omega}} 
+  \dfrac{\gamma}{\gamma+\varepsilon}\mathcal{N}_{\overline{\Gamma}}(u(\boldsymbol{x})-u_{RD})\\
+\dfrac{\gamma\varepsilon }{\gamma+\varepsilon} \mathcal{N}_{\overline{\Gamma}}(\partial_n u(\boldsymbol{x})-q_{RN}) = 0.
\end{equation}
A few computations show that the Euler-Lagrange equations for this problem are 
\begin{equation}\label{eq:SBM_Robin_yes_b2}
\begin{split}
(-\boldsymbol{\nabla}^2 u +\alpha u - f,v)_{\overline{\Omega}}  +   
\dfrac{\gamma}{\gamma+\varepsilon}
 (\underbrace{u(\boldsymbol{x})-u_D + \varepsilon  
(\partial_n u(\boldsymbol{x})-q_{RN}) },v(\boldsymbol{x})/\gamma - \partial_{\bar{n}}v(\overline{\boldsymbol x}  ))_{\overline{\Gamma}}=0,
\end{split}
\end{equation}
which readily shows the consistency with the Robin condition within an error of order $\max(\mathcal{O}(h^{k+1}),\min(\varepsilon,C) \mathcal{O}(h^{k}))$ for some constant $C>0$. The above approximation is also asymptotic preserving, and we can prove the following.

\begin{proposition}[AP property of the full Nitsche-Robin embedded method] The Robin embedded method with shifted boundary polynomial correction \eqref{eq:SBM_Robin_yes_b1} is asymptotic preserving in both the Dirichlet limit $\varepsilon\rightarrow 0$ and in the Neumann limit $\varepsilon^{-1} \rightarrow 0$. In particular, we have that 
\begin{itemize}
\item in the limit $\varepsilon \ll 1$, the discrete modes $\{u_m\}_{m\ge 0}$ expanded as in \eqref{eq:robin_D_expansion} verify the coercive asymptotic variational problems
\begin{equation}\label{eq:AP-SBMa_D_2}
 \mathcal{I}_{\overline{\Omega}}(u_m) -(\partial_{\bar{n}} u_m(\overline{\boldsymbol x}),v)_{\overline{\Gamma}} -(f_m,v)_{\overline{\Omega}} 
+  \mathcal{N}_{\overline{\Gamma}}( u_m(\boldsymbol{x})-u_{D,m})=
\overline{\mathcal{E}}_m,
\end{equation}
with $u_{D,m}$ defined by Proposition 2.1, and with $f_0=f$ and $f_m=0$ for $m\ge 1$, and where $\overline{\mathcal{E}}_0 =0$   while 
\begin{equation}\label{eq:AP-SBMa_D1_2}
\overline{\mathcal{E}}_m :=  -\sum_{\ell = 0}^{m-1} (-\gamma)^{-(m-\ell)}\mathcal{N}_{\overline{\Gamma}}(u_{\ell}(\boldsymbol{x})-u_{D,\ell}), \quad \forall m\ge 1.
\end{equation}
\item in the limit $\varepsilon^{-1} \ll 1$, discrete modes expanded as in \eqref{eq:robin_N_expansion} verify the asymptotic variational problems
\begin{equation}\label{eq:AP-SBMa_N}
 \mathcal{I}_{\overline{\Omega}}(u_m) -(\partial_{\bar{n}} u_m(\overline{\boldsymbol x}),v)_{\overline{\Gamma}} -(f_m,v)_{\overline{\Omega}} 
+  \mathcal{N}_{\overline{\Gamma}}( \gamma ( \boldsymbol{\nabla} u_m(\boldsymbol{x})\cdot\boldsymbol{n} -q_{N,m})  )=
\overline{\mathcal{E}}_m,
\end{equation}
with $q_{N,m}$ defined by Proposition 2.2, and with $f_0=f$ and $f_m=0$ for $m\ge 1$, and where $\overline{\mathcal{E}}_0 =0$ while 
\begin{equation}\label{eq:AP-SBMa_N1_2}
\overline{\mathcal{E}}_m :=     \sum_{\ell = 0}^{m-1} (-\gamma)^{(m+1-\ell)}\mathcal{N}_{\overline{\Gamma}}(\boldsymbol{\nabla} u_{\ell}(\boldsymbol{x})\cdot \boldsymbol{n} -q_{N,\ell}) \quad \forall m\ge 1.
\end{equation}
\end{itemize}
\end{proposition}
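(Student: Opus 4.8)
The plan is to follow the template set by Proposition 6.1 and, one level down, by the conformal Propositions 4.1 and 4.2, while exploiting the one structural feature that distinguishes \eqref{eq:SBM_Robin_yes_b1} from \eqref{eq:SBM_Robin_yes_a1}: here the combination coefficients contain the spatially \emph{constant} parameter $\gamma$ rather than $\bar{n}_n\gamma$. That is a genuine simplification, because a constant $\gamma$ lets me clear the denominator $\gamma+\varepsilon$ by multiplying \eqref{eq:SBM_Robin_yes_b1} through by it, so I never need the Maclaurin-series device used for Proposition 6.1. Consistency for finite $\varepsilon$ is already recorded in the Euler--Lagrange identity \eqref{eq:SBM_Robin_yes_b2}, so the whole task reduces to the two formal expansions. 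Throughout I would write $G_m := \mathcal{I}_{\overline{\Omega}}(u_m) - (\partial_{\bar n} u_m(\overline{\boldsymbol x}),v)_{\overline{\Gamma}} - (f_m,v)_{\overline{\Omega}}$ for the volume and surrogate-normal contribution of the $m$-th mode, with $f_0=f$ and $f_m=0$ for $m\ge 1$, so that the forcing enters only at leading order; each of $\mathcal{I}_{\overline{\Omega}}$, $(\partial_{\bar n}\,\cdot\,,v)_{\overline{\Gamma}}$ and $\mathcal{N}_{\overline{\Gamma}}$ is linear in its argument.

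For the Dirichlet limit I would multiply \eqref{eq:SBM_Robin_yes_b1} by $\gamma+\varepsilon$, insert the ansatz \eqref{eq:robin_D_expansion}, and equate equal powers of $\varepsilon$. At order $\varepsilon^0$ the relation $u_{D,0}=u_{RD}$ of Proposition 2.1 turns the boundary contribution into $\mathcal{N}_{\overline{\Gamma}}(u_0(\boldsymbol x)-u_{D,0})$, giving \eqref{eq:AP-SBMa_D_2} with $\overline{\mathcal{E}}_0=0$. At order $\varepsilon^m$, $m\ge1$, the relations $u_{D,1}=q_{RN}-\partial_n u_0$ and $u_{D,m}=-\partial_n u_{m-1}$ let me merge the $\mathcal{N}_{\overline{\Gamma}}(u_m(\boldsymbol x))$ term (from the Dirichlet part) with the $\varepsilon$-shifted $\mathcal{N}_{\overline{\Gamma}}(\partial_n u_{m-1}(\boldsymbol x))$ term (from the Neumann part) into the single penalty $\mathcal{N}_{\overline{\Gamma}}(u_m(\boldsymbol x)-u_{D,m})$. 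The only leftover is a volume contribution, so \eqref{eq:AP-SBMa_D_2} holds with $\overline{\mathcal{E}}_m=-\tfrac{1}{\gamma}G_{m-1}$; substituting the order-$(m-1)$ identity $G_{m-1}=\overline{\mathcal{E}}_{m-1}-\mathcal{N}_{\overline{\Gamma}}(u_{m-1}(\boldsymbol x)-u_{D,m-1})$ gives the recursion $\overline{\mathcal{E}}_m=-\tfrac{1}{\gamma}\overline{\mathcal{E}}_{m-1}+\tfrac{1}{\gamma}\mathcal{N}_{\overline{\Gamma}}(u_{m-1}(\boldsymbol x)-u_{D,m-1})$, which a one-line induction unrolls into the closed form \eqref{eq:AP-SBMa_D1_2}.

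The Neumann limit is handled symmetrically: I would multiply \eqref{eq:SBM_Robin_yes_b1} by $\varepsilon^{-1}\gamma+1$, insert \eqref{eq:robin_N_expansion}, and collect powers of $\varepsilon^{-1}$. Now $q_{N,0}=q_{RN}$ yields the leading Neumann-with-penalty form \eqref{eq:AP-SBMa_N} at order $\varepsilon^0$, while $q_{N,1}=u_{RD}-u_0$ and $q_{N,m}=-u_{m-1}$ of Proposition 2.2 fuse the surviving $\mathcal{N}_{\overline{\Gamma}}(\partial_n u_m(\boldsymbol x))$ and $\mathcal{N}_{\overline{\Gamma}}(u_{m-1}(\boldsymbol x))$ pieces into $\mathcal{N}_{\overline{\Gamma}}(\gamma(\boldsymbol{\nabla} u_m(\boldsymbol x)\cdot\boldsymbol n - q_{N,m}))$. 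The leftover from the $\varepsilon^{-1}\gamma$ factor is $-\gamma G_{m-1}$, and the same substitution produces $\overline{\mathcal{E}}_m=-\gamma\overline{\mathcal{E}}_{m-1}+\gamma^2\mathcal{N}_{\overline{\Gamma}}(\boldsymbol{\nabla} u_{m-1}(\boldsymbol x)\cdot\boldsymbol n - q_{N,m-1})$, whose induction gives \eqref{eq:AP-SBMa_N1_2}. For coercivity I would invoke, in the Dirichlet cascade, the equivalence of each \eqref{eq:AP-SBMa_D_2} with a symmetrized Nitsche SBM problem, coercive by \cite{Atallah2022}; in the Neumann cascade, the trace-inequality argument already used in Proposition 4.2, coercive under the scaling $\gamma=C_\gamma h$ with $C_\gamma C_{\mathrm{I}}\le 1-C$ as in \eqref{eq:gamma}.

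The power-counting is routine; the main obstacle is verifying that the polynomial-corrected data actually produce these clean cancellations. Concretely I must check that the corrections \eqref{eq:Robin-data} collapse the surrogate residuals \emph{exactly} to $u(\boldsymbol x)-u_{RD}$ and $\partial_n u(\boldsymbol x)-q_{RN}$ — in particular that the normal-mismatch correction $\mathcal{D}_N^n$ cancels the $\partial_{\bar n}u(\overline{\boldsymbol x})-\partial_n u(\overline{\boldsymbol x})$ discrepancy so that no stray $\bar{n}_n$ factor survives. This is precisely what separates \eqref{eq:SBM_Robin_yes_b1} from the inconsistent \eqref{eq:SBM_Robin_nope} and is what makes the plain-$\gamma$ combination admissible. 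I would also watch the index bookkeeping in the two recursions, since an off-by-one in the shift induced by the $\varepsilon$ (resp.\ $\varepsilon^{-1}$) factor flips the exponent of $(-\gamma)$ and thus the stated error formulas, and I would confirm that the surrogate-boundary trace inequality underpinning the Neumann coercivity holds uniformly in the mesh, which is the one point genuinely particular to the embedded setting.
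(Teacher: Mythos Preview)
Your proposal is correct and follows essentially the same approach as the paper, which simply states that ``the proof follows the steps of the proof of Proposition 4.1 and 4.2'' and omits the details. Your observation that the constant $\gamma$ in \eqref{eq:SBM_Robin_yes_b1} lets you clear the denominator by multiplying through (rather than expanding $1/(\bar\gamma+\varepsilon)$ via a Maclaurin series as in Proposition 6.1) is exactly why the paper can defer to Propositions 4.1 and 4.2 here, and your recursion-and-induction derivation of \eqref{eq:AP-SBMa_D1_2} and \eqref{eq:AP-SBMa_N1_2} makes explicit what the paper leaves implicit. One small remark: what you flag as the ``main obstacle''---that the polynomial corrections \eqref{eq:Robin-data} collapse the surrogate residuals to $u(\boldsymbol x)-u_{RD}$ and $\partial_n u(\boldsymbol x)-q_{RN}$ with no stray $\bar n_n$ factor---is already absorbed into the derivation of \eqref{eq:SBM_Robin_yes_b1} itself, which is the \emph{starting point} of the proposition; so there is nothing further to verify on that front within this proof.
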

\begin{proof}
The proof follows the steps of the proof of Proposition 4.1 and 4.2. It is omitted for brevity.
\end{proof}

This approach has the advantage of not involving a parameter depending on the 
orientation of the surrogate boundary compared to the true boundary. For completeness, we report the bi-linear and linear forms corresponding to the corrected scheme, which read
\begin{equation}\label{eq:NR-consistent2-var}
    \begin{split}
    a \langle u,v \rangle_{\overline{\Gamma}_R} &=  - ( \partial_{\bar n}u(\overline{\boldsymbol{x}}), v)_{\overline{\Gamma}_R} +
 \dfrac{\gamma}{\gamma+\varepsilon} \mathcal{N}_{\overline{\Gamma}_R}( u(\boldsymbol{x}))
+\dfrac{\gamma\varepsilon }{\gamma+\varepsilon} \mathcal{N}_{\overline{\Gamma}_R}(\partial_n u(\boldsymbol{x})),\\
  b \langle v \rangle_{\overline{\Gamma}_R} &=\dfrac{\gamma}{\gamma+\varepsilon}
     \mathcal{N}_{\overline{\Gamma}_R}(  u_{RD})+\dfrac{\gamma\varepsilon }{\gamma+\varepsilon}
 \mathcal{N}_{\overline{\Gamma}_R}(q_{RN}).
\end{split}
\end{equation}

\subsection{The Aubin-Robin formulation}

We consider the embedded formulation of the Aubin-Robin obtained by simplifying the last approach. In particular, using the notation  
\begin{equation}\label{eq:overline_A}
\mathcal{A}_{\overline{\Gamma}}(u) := (u,v(\boldsymbol{x})/\gamma )_{\overline\Gamma},
\end{equation}
we consider the embedded Aubin-Robin method obtained setting as
\begin{equation}\label{eq:NR-consistent2-var_2}
    \begin{split}
    a \langle u,v \rangle_{\overline{\Gamma}_R} &=  - ( \partial_{\bar n}u(\overline{\boldsymbol{x}}), v)_{\overline{\Gamma}_R} +
 \dfrac{\gamma}{\gamma+\varepsilon}  \mathcal{A}_{\overline{\Gamma}_R}\!( u(\boldsymbol{x}))
+\dfrac{\gamma\varepsilon }{\gamma+\varepsilon} \mathcal{A}_{\overline{\Gamma}_R}\!(\partial_n u(\boldsymbol{x})),\\
  b \langle v \rangle_{\overline{\Gamma}_R} &=\dfrac{\gamma}{\gamma+\varepsilon}
     \mathcal{A}_{\overline{\Gamma}_R}\!(  u_{RD})+\dfrac{\gamma\varepsilon }{\gamma+\varepsilon}
 \mathcal{A}_{\overline{\Gamma}_R}\!(q_{RN}).
\end{split}
\end{equation}
This method can be shown to be AP, and in particular, enjoy the following property.

\begin{proposition}[AP property of the Aubin-Robin embedded method] The Robin embedded method with shifted boundary polynomial correction \eqref{eq:SBM_Robin_yes_b1} is asymptotic preserving in both the Dirichlet limit $\varepsilon\rightarrow 0$ and in the Neumann limit $\varepsilon^{-1} \rightarrow 0$. In particular, we have that 
\begin{itemize}
\item in the limit $\varepsilon \ll 1$, the discrete modes $\{u_m\}_{m\ge 0}$ expanded as in \eqref{eq:robin_D_expansion} verify the coercive asymptotic variational problems
\begin{equation}\label{eq:AP-SBMa_D}
 \mathcal{I}_{\overline{\Omega}}(u_m) -(\partial_{\bar{n}} u_m(\overline{\boldsymbol x}),v)_{\overline{\Gamma}} -(f_m,v)_{\overline{\Omega}} 
+  \mathcal{A}_{\overline{\Gamma}}( u_m(\boldsymbol{x})-u_{D,m})=
\overline{\mathcal{E}}_m,
\end{equation}
with $u_{D,m}$ defined by Proposition 2.1, and with $f_0=f$ and $f_m=0$ for $m\ge 1$, and where $\overline{\mathcal{E}}_0 =0$   while 
\begin{equation}\label{eq:AP-SBMa_D1_3}
\overline{\mathcal{E}}_m :=  -\sum_{\ell = 0}^{m-1} (-\gamma)^{-(m-\ell)}\mathcal{A}_{\overline{\Gamma}}(u_{\ell}(\boldsymbol{x})-u_{D,\ell}), \quad \forall m\ge 1.
\end{equation}
\item in the limit $\varepsilon^{-1} \ll 1$, discrete modes expanded as in \eqref{eq:robin_N_expansion} verify the asymptotic variational problems
\begin{equation}\label{eq:AP-SBMa_N_2}
 \mathcal{I}_{\overline{\Omega}}(u_m) -(\partial_{\bar{n}} u_m(\overline{\boldsymbol x}),v)_{\overline{\Gamma}} -(f_m,v)_{\overline{\Omega}} 
+  \mathcal{A}_{\overline{\Gamma}}( \gamma ( \boldsymbol{\nabla} u_m(\boldsymbol{x})\cdot\boldsymbol{n} -q_{N,m})  )=
\overline{\mathcal{E}}_m,
\end{equation}
with $q_{N,m}$ defined by Proposition 2.2, and with $f_0=f$ and $f_m=0$ for $m\ge 1$, and where $\overline{\mathcal{E}}_0 =0$ while 
\begin{equation}\label{eq:AP-SBMa_N1_3}
\overline{\mathcal{E}}_m :=     \sum_{\ell = 0}^{m-1} (-\gamma)^{(m+1-\ell)}\mathcal{A}_{\overline{\Gamma}}(\boldsymbol{\nabla} u_{\ell}(\boldsymbol{x})\cdot \boldsymbol{n} -q_{N,\ell}) \quad \forall m\ge 1.
\end{equation}
\end{itemize}
\end{proposition}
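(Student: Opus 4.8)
The plan is to mirror the argument used for the full Nitsche-Robin embedded method (the proposition attached to \eqref{eq:SBM_Robin_yes_b1}), replacing the Nitsche penalty $\mathcal{N}_{\overline{\Gamma}}$ by the Aubin penalty $\mathcal{A}_{\overline{\Gamma}}$ from \eqref{eq:overline_A} throughout. The structural feature that makes this clean is that the combination weights $\gamma/(\gamma+\varepsilon)$ and $\gamma\varepsilon/(\gamma+\varepsilon)$ in \eqref{eq:NR-consistent2-var_2} involve only the spatially constant penalty $\gamma = C_{\gamma} h$, unlike the $\bar\gamma = \bar{n}_n\gamma$ appearing in \eqref{eq:SBM_Robin_yes_a1}. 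Consequently one may clear denominators by multiplying the whole variational statement by $(\gamma+\varepsilon)$ directly, without the Maclaurin expansion device needed for the corrected-coefficients method. First I would write the full embedded Aubin-Robin form by augmenting \eqref{eq:NR-consistent2-var_2} with the volume contribution $\mathcal{I}_{\overline{\Omega}}(u)-(f,v)_{\overline{\Omega}}$ and the natural flux $-(\partial_{\bar n}u(\overline{\boldsymbol x}),v)_{\overline{\Gamma}}$, exactly as in \eqref{eq:SBM_Robin_yes_b1} but with $\mathcal{A}$ in place of $\mathcal{N}$.

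For the Dirichlet limit I would insert the ansatz \eqref{eq:robin_D_expansion}, multiply through by $(\gamma+\varepsilon)$, and collect equal powers of $\varepsilon$. The $\varepsilon^0$ balance reproduces the leading Aubin-Dirichlet SBM problem, while each $\varepsilon^m$ balance for $m\ge 1$, after dividing by $\gamma$ and substituting the lower-order equations, yields \eqref{eq:AP-SBMa_D} together with the recursive error terms \eqref{eq:AP-SBMa_D1_3}; the factors $(-\gamma)^{-(m-\ell)}$ arise precisely from this recursive elimination of the previous modes. The Neumann limit is handled symmetrically: I would insert \eqref{eq:robin_N_expansion}, multiply by $1+\gamma\varepsilon^{-1}$, and match powers of $\varepsilon^{-1}$ to obtain \eqref{eq:AP-SBMa_N_2} and \eqref{eq:AP-SBMa_N1_3}. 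In both cases the key identifications $u_{D,m}$ and $q_{N,m}$ are supplied by Propositions 2.1 and 2.2, and the appearance of $\partial_n u_\ell(\boldsymbol{x})$ inside the Aubin brackets is reconciled with the continuous data through the polynomial-correction identities of Sections \ref{sec:poly_cor_dir} and \ref{sec:poly_cor_neu}.

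To close the consistency part I would integrate each mode equation by parts, using that $\mathcal{A}_{\overline{\Gamma}}(u_m(\boldsymbol{x})-u_{D,m})$ and $\mathcal{A}_{\overline{\Gamma}}(\gamma(\boldsymbol{\nabla} u_m(\boldsymbol{x})\cdot\boldsymbol{n}-q_{N,m}))$ vanish on the exact modes, thereby recovering the continuous cascades \eqref{eq:Robin_D_O0}--\eqref{eq:Robin_D_Om} and \eqref{eq:Robin_N_O0}--\eqref{eq:Robin_N_Om}. For coercivity in the Dirichlet limit I would reuse the classical Aubin-Dirichlet estimate already recalled for \eqref{eq:weak_cbm_bilinear_2d_aubin}--\eqref{eq:weak_cbm_bilinear_rbc_aubin}, controlling the boundary gradient term $-(\partial_{\bar n}u,v)_{\overline{\Gamma}}$ against the penalty by a trace inequality.

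The step I expect to be the main obstacle is the coercivity of the Neumann-limit form, because the Aubin penalty lacks the symmetrizing $\partial_{\bar n}v$ term present in the Nitsche case. Testing \eqref{eq:AP-SBMa_N_2} with $v=u_m$, the two boundary contributions $-(\partial_{\bar n}u_m(\overline{\boldsymbol x}),u_m(\overline{\boldsymbol x}))_{\overline{\Gamma}}$ and $(\boldsymbol{\nabla} u_m(\boldsymbol{x})\cdot\boldsymbol{n},u_m(\boldsymbol{x}))_{\overline{\Gamma}}$ cancel exactly only on conformal meshes; on the surrogate boundary they differ by a remainder governed by the geometric factor $(1-\bar{n}_n)$, the tangential component of $\overline{\boldsymbol{n}}$, and the extrapolation distance $\|\boldsymbol{d}\|$. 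The delicate point is therefore to show that this geometric remainder is higher order in $h$ and can be absorbed into the $H^1$ ellipticity of $\mathcal{I}_{\overline{\Omega}}$ via the trace inequality and the scaling $\gamma=C_{\gamma}h$, after which coercivity reduces to that of the standard Neumann method and the argument concludes.
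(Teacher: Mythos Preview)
Your approach is essentially the paper's approach: the paper's proof is a one-liner referring back to Propositions 4.1 and 4.2, and you correctly identify that with $\gamma$ constant one simply multiplies by $(\gamma+\varepsilon)$ (resp.\ $1+\gamma\varepsilon^{-1}$), inserts the asymptotic ansatz, and matches powers---exactly the mechanics of those earlier proofs.

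One point where you are working harder than necessary: the ``main obstacle'' you anticipate, coercivity of the Neumann-limit form \eqref{eq:AP-SBMa_N_2}, is not actually asserted in the proposition. Read the two bullet points carefully: only the Dirichlet-limit problems are claimed to be \emph{coercive}; the Neumann-limit claim is merely that the modes satisfy the stated variational problems and that these are consistent with \eqref{eq:Robin_N_O0}--\eqref{eq:Robin_N_Om}. This mirrors the pattern of Propositions 6.1 and 6.2, where the same asymmetry appears. So the geometric-remainder argument you sketch for absorbing $-(\partial_{\bar n}u_m(\overline{\boldsymbol x}),u_m)_{\overline{\Gamma}}+(\partial_n u_m(\boldsymbol{x}),u_m(\boldsymbol{x}))_{\overline{\Gamma}}$ into $\mathcal{I}_{\overline{\Omega}}$ is not required to establish what is stated, and the paper does not attempt it either. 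Your Dirichlet-limit coercivity argument (trace inequality plus the Aubin penalty) is what is needed and matches the paper's treatment in Section \ref{sec:AP_Robin}.
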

\begin{proof}
The proof follows the steps of the proof of Proposition 4.1 and 4.2. It is omitted for brevity.
\end{proof}

\begin{figure}[b]
    \centering
    \includegraphics{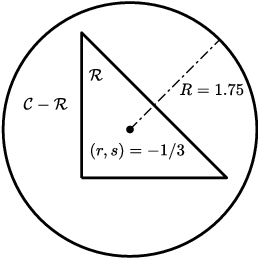}
    \caption{Graphical setup for the numerical domain when computing the Lebesque constants in interpolation on $\mathcal{R}$ and extrapolation on $\mathcal{C}-\mathcal{R}$.}
    \label{fig:Lebesque_constants}
\end{figure}

\subsection{Choice of the embedding: a note on interpolation error and conditioning}\label{sec:note_on_conditioning}

It is well known that the condition number of the system matrix, $\kappa(\mathcal{A})$, is of immense importance in relation to the cost of the matrix inversion and the accuracy of the resulting solution. Important factors for the value of $\kappa(\mathcal{A})$ are the choice of the finite element basis and associated quadrature points. The iso-parametric SEM is highly optimized for well-behaved interpolation, differentiation, and integration \textit{within} the element at the delicately picked quadrature points \cite{Warburton2006,Xu2018}. However, extrapolation may be as important as interpolation when considering unfitted finite elements. In the SBM setting using the classical extrapolation of $u(\boldsymbol{x})$ and $\boldsymbol{\nabla} u(\boldsymbol{x})$ will therefore result in evaluating the finite element polynomial outside the local element.

It is also well known that the Lebesque constant, $\Lambda_P$, is a good measure of the polynomial interpolation quality, e.g., see \cite{Pasquetti2010}. The Lebesque constant is defined by the Lebesque function, $\mathcal{L}_P = \mathcal{L}_P(\boldsymbol{r})$, in the reference domain, $\mathcal{R}$, as
\begin{equation}
    \Lambda_P = \underset{\boldsymbol{r} \in \mathcal{R}}{\max} ~\mathcal{L}_P(\boldsymbol{r}), \quad \text{where} \quad \mathcal{L}_P(\boldsymbol{r}) = \sum_{i = 1}^{N_{\text{ep}}} | h_i(\boldsymbol{r}) |.
\end{equation}

From \cite{Chen1995}, we see that the interpolation error in the maximum norm, $ \|f - \mathcal{I}_P f \|_{\infty}$, is bounded from above by the Lebesque constant as
\begin{equation}
    \|f - \mathcal{I}_P f \|_{\infty} \leq (1 + \Lambda_P) \| f - f^* \|_{\infty},
\end{equation}
where $f$ is some function with the interpolation $\mathcal{I}_P f$ over $\mathcal{R}$ and $f^* \in {\rm I\!P}^{P}$. The nodal Lagrange basis is bounded within the reference domain, yet, the basis functions grow unbounded outside at an exponential rate as higher and higher orders are considered. Hereby, it is expected that extrapolating the basis function will increase the Lebesque constant such that the interpolation error increases.

\begin{table}[t]
\centering
\caption{Lebesque constant, $\Lambda_P$, for polynomial orders, $P = \{1,...,10\}$, in interpolation and extrapolation as defined on Figure \ref{fig:Lebesque_constants} using the node distribution due to \cite{Warburton2006}.}
\begin{tabular}{r r r} \toprule
$P$  & Interpolation & Extrapolation \\ \midrule
1  & $1.00 \cdot 10^0$ & $2.76 \cdot 10^0$        \\ 
2  & $1.67 \cdot 10^0$ & $1.42 \cdot 10^1$       \\ 
3  & $2.11 \cdot 10^0$ & $8.03 \cdot 10^1$      \\ 
4  & $2.66 \cdot 10^0$ & $4.67 \cdot 10^2$     \\ 
5  & $3.12 \cdot 10^0$ & $2.73 \cdot 10^3$     \\ 
6  & $3.70 \cdot 10^0$ & $1.60 \cdot 10^4$    \\ 
7  & $4.27 \cdot 10^0$ & $9.44 \cdot 10^4$   \\ 
8  & $4.96 \cdot 10^0$ & $5.59 \cdot 10^5$  \\ 
9  & $5.74 \cdot 10^0$ & $3.32 \cdot 10^6$ \\ 
10 & $6.67 \cdot 10^0$ & $1.99 \cdot 10^7$ \\ \bottomrule
\end{tabular}
 \label{tab:Lebesque_constants}
\end{table}

To support this argument, a simple numerical exercise is considered inspired by \cite{Pasquetti2010}. The results are highlighted in Table \ref{tab:Lebesque_constants}, where the Lebesque constants are computed numerically for the classical interpolation case on $\mathcal{R}$ and similarly computed in extrapolation. The latter case is done for a circle of radius $R = 1.75$ with its center at the barycenter of the reference triangle, $(r,s) = -1/3$, i.e., $\mathcal{C} = \{ \boldsymbol{r} = (r,s) | (r + 1/3)^2 + (s + 1/3)^2 \leq 1.75^2 \}$. The extrapolation is done on $\mathcal{C}-\mathcal{R}$. This setup is visualized in Figure \ref{fig:Lebesque_constants}. From the Table, it is clear that the Lebesque constant -- and therefore also the extrapolation error --  grows rapidly and unbounded when increasing the polynomial orders. Note that the tabulated values for interpolation agree with the ones presented in \cite{Pasquetti2010,Hesthaven2007}.

 This motivates a comparison of the extrapolation-based SBM thoroughly with the one using the interpolation domain, as introduced in Section \ref{sec:mappings}. In particular, in the upcoming, we will compare the following four approaches denoted by the abbreviations: \textbf{CBM}, \textbf{SBM-e}, \textbf{SBM-ei}, and \textbf{SBM-i} as
\begin{enumerate}

    \item \textbf{CBM}: Body-fitted affine iso-parametric meshes where the elements conform to $\Gamma$. See Figure  \ref{fig:three_mappings} (top-left).
    
    \item \textbf{SBM-e}: Unfitted affine meshes, where $\overline{\Omega}$ is enclosed completely within $\Gamma$, and the mapping is defined in the original SBM fashion, i.e., along the true normal, $\boldsymbol{n}$, on $\Gamma$. This approach results in a pure extrapolation procedure as seen in Figure \ref{fig:three_mappings} (top-right). Note how all points are in pure extrapolation.
    
    \item \textbf{SBM-ei}: Unfitted meshes, where $\Gamma$ is within $\overline{\Omega}$, and the mapping is along $\boldsymbol{n}$ on $\Gamma$ as for the SBM-e. As visible in Figure \ref{fig:three_mappings} (lower-left), not all the quadrature points are mapped into the cells, thus resulting in a mixed extrapolation and interpolation procedure.
    
    \item \textbf{SBM-i}: Unfitted meshes, where $\Gamma$ is within $\overline{\Omega}$, and the mapping is defined locally such that any point $\overline{\boldsymbol{x}}$ of a surrogate boundary edge is mapped to a point $\boldsymbol{x} = \mathcal{M}(\overline{\boldsymbol{x}})$ within the element to which the edge belongs, as depicted in Figure \ref{fig:three_mappings} (lower-right). For this paper, we distribute the points on $\Gamma$ equidistantly between the edge intersections.

\end{enumerate}

\begin{figure}[h]
     \centering
     \begin{subfigure}
         \centering
         \includegraphics[width=0.35\textwidth]{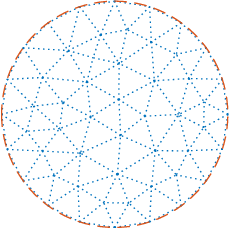}
     \end{subfigure}
     \hfill
     \begin{subfigure}
         \centering
         \includegraphics[width=0.35\textwidth]{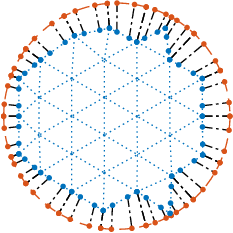}
     \end{subfigure}
     \hfill
     \begin{subfigure}
         \centering
         \includegraphics[width=0.35\textwidth]{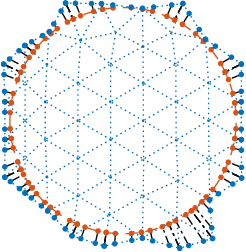}
     \end{subfigure}
     \hfill
     \begin{subfigure}
         \centering
         \includegraphics[width=0.35\textwidth]{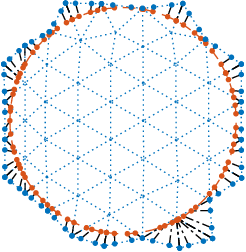}
     \end{subfigure}
        \caption{Illustrations of CBM (top-left) and the three SBM procedures, SBM-e (top-right), SBM-ei (lower-left), and SBM-i (lower-right), for $P=3$. The red line is $\Gamma$, the blue lines constitute the elements that form $\overline{\Omega}_h$, the red and blue dots represent nodal points $\boldsymbol{x} \in \Gamma$ and $\overline{\boldsymbol{x}} \in \overline{\Gamma}$, respectively, and the dashed black line is the mapping, $\mathcal{M}(\overline{\boldsymbol{x}})$.}
        \label{fig:three_mappings}
\end{figure}
 \section{Numerical Experiments}\label{sec:numerical_results}
In the following, we show a number of numerical experiments with Dirichlet, Neumann, and Robin boundary conditions. These will be of the Aubin and Nitsche variational forms using the four boundary and mapping approaches (CBM, SBM-e, SBM-ei, and SBM-i). See Figure \ref{fig:three_mappings} for the general concept hereof. We consider meshes composed of affine triangular elements that are quasi-uniform in size and arranged in an unstructured manner. Finite elements up to polynomial order $P=10$ are used on meshes that are generated with the open-source meshing software Gmsh \cite{Geuzaine2009}. An essential input to Gmsh is the characteristic length, $l_c$, which is an approximate size of the desired elements, $l_c \approx h_{\text{avg}}$. The true geometrical domain, $\Omega$, is of a circle with radius $R = 0.375$, unless stated otherwise. The circle is placed with center at $(x_0,y_0) = (L_x/2,L_y/2)$, with $L_x = 1$ and $L_y = 1$ (again, unless stated otherwise) being the domain lengths of the background mesh in the $x$ and $y$-directions, respectively. The proposed model is currently implemented in Matlab, version R2022a, and the linear system of equations in \eqref{eq:linear_system_of_equations} is solved with a direct solver within this framework.

For simplicity, we use the same exact solution throughout the paper obtained by the classical method of manufactured solutions (MMS). We construct the solution to the Poisson problem, such that $u_{\text{MMS}}(x,y) \in C^{\infty}$, by combining trigonometric functions with a linear polynomial twist as
\begin{equation}
    u_{\text{MMS}}(x,y) = \cos \left ( 5 \frac{x\pi}{L_x} \right )  \sin \left (5 \frac{y\pi}{L_y} \right ) + 2x - 1y.
\end{equation}
As an error measure, we exploit exact global integration through the SEM on the affine elements, with this $\|u_h - u \|_{L^1} = \int_{\Omega} |u_{\text{SEM}} - u_{\text{MMS}} | d \Omega$. Also, we opt to use a second measure as $\|\mathcal{A}u - b\|_{L^1}$ where $u = u_{\text{MMS}}$. This measure returns the level of truncation error without the influence of, e.g., conditioning issues from the matrix inversion.

\textbf{Outline of experiments}: We consider the three types of boundary conditions one by one. For Dirichlet, a verification of Aubin's and Nitsche's formulations is carried out on aligned meshes. This numerical experiment aims to -- qualitatively -- compare with other high-order SBM implementations in the literature. Then, we consider the solution on completely unstructured meshes, where $(\overline{\boldsymbol{n}} \cdot \ \boldsymbol{n})$ is not necessarily close to unity. For Neumann conditions, we limit ourselves -- for brevity -- to a single $p$-convergence study; however, general comments for the Neumann problem are included. For the Robin formulation, the consistent and inconsistent formulations are compared for Aubin and Nitsche. Also included is an assessment on unstructured meshes and high-order verification of the implementation. Lastly, we use the Robin condition with variable $\varepsilon$ to solve a mixed Dirichlet-Neumann problem and confirm the asymptotic preserving properties.

\subsection{Dirichlet boundary conditions}

First, we consider the pure Dirichlet problem, where we choose the penalty constant to be $\gamma^{-1} = 2/h_{\text{avg}}$, as this -- in the CBM case -- has provided the expected algebraic convergence rates in general. The authors note that the choice of an insufficient penalty constant can limit the convergence, yet too large values can amplify potential issues with ill-conditioning. A penalty constant that varies with the approximation order is presented in \cite{Atallah2022}, whereas a penalty-free high-order method is demonstrated in \cite{Collins2023}.

\subsubsection{Verification on aligned meshes}

A $h$-convergence comparison is carried out for $P = \{1,...,5\}$ for Aubin and Nitsche using CBM, SBM-e, SBM-ei, and SBM-i. In Appendix \ref{sec:appendix_DBC}, the figures are presented for Aubin in Figure \ref{fig:1_Aubin_DBC} and Nitsche in Figure \ref{fig:1_Nitsche_DBC}. We purposely align the surrogate boundary to the true boundary, such that $\overline{\boldsymbol{n}} \cdot \boldsymbol{n} \approx 1$. To do so, we generate the unfitted mesh by fitting another circle of radius $R \pm l_c/4$. The cases with "$+$" are for interpolation (SBM-ei and SBM-i), whereas "$-$" is for extrapolation (SBM-e). Meshes for CBM are constructed trivially. Alongside the $h$-convergence study, the associated matrix conditioning, $\kappa(\mathcal{A})$, is considered as well. 
As shown in all previous works, despite unfitted meshes, the shifted boundary approach genuinely provides high-order results with comparable errors. All results show optimal converged behavior of $\mathcal{O}(h_{\max}^{P+1})$ and align -- in magnitude and scaling -- with those presented in \cite{Atallah2022,Collins2023}. Ultimately, this confirms the potential of the SBM strategy. The condition number obtained is comparable in size where all the methods somewhat follow monotonically the $\mathcal{O}(1/h_{\max}^{2})$ scaling as for the CBM approach. This is one of the key features of the SBM approach due to the complete absence of small cut cells. Moreover, the Aubin formulation looks to provide slightly more regular and smooth rates compared to the Nitsche formulation. 

\subsubsection{Assessment on unstructured meshes}

\begin{figure}[t]
    \centering
    \includegraphics{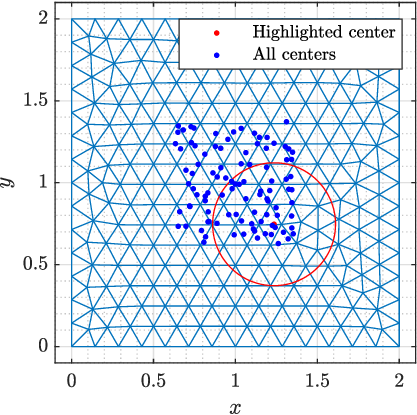}
    \caption{Mesh visualization for overall assessment. 100 circles of radius $R = 0.375$ with randomly generated centers. Highlighted center in red and all centers in blue.}
    \label{fig:Overall_assessment_mesh}
\end{figure}

When considering any of the shifted boundary formulations, we find some irregularities in the convergence properties occasionally. Those irregularities are due to mesh sensitivity, i.e., where and how the true domain is embedded in the background mesh. Here, the distance between the true and surrogate boundaries can be small or larger; the boundaries can be almost parallel or almost perpendicular, etc. Due to this, the convergence rates are not always perfectly monotonic, as seen multiple times in SBM literature \cite{Nouveau2019,Collins2023}. Therefore, we wish to compare SBM-e, SBM-ei, and SBM-i more quantitatively to justify numerically whether the interpolating approaches improve the numerical scheme over the classical extrapolation approach. For this, a $(L_x \times L_y)$ square domain is considered with $L_x = L_y = 2$ which is is meshed using $l_c = 0.15$. Now, 100 circles are embedded into the background mesh (one at a time) given a randomly generated center as shown in Figure \ref{fig:Overall_assessment_mesh}.

For each of the 100 mesh configurations, we consider $p$-convergence studies and the associated matrix conditioning. The median and the variance of $\|\mathcal{A}u - b \|_{L^1}$ and $\kappa(\mathcal{A})$ of the 100 discrete solutions are then computed and displayed in \ref{fig:3_DBC} for SBM-e, SBM-ie, and SBM-e with both Aubin and Nitsche formulations. Regarding the median, we observe for the Aubin formulation how SBM-ei and SBM-i provide a more favorable convergence rate than SBM-e. A similar conclusion is present for the Nitsche formulations; however, SBM-i looks to be slightly advantageous over SBM-ei. The same observation is seen in the conditioning median, especially for $P > 3$. The differences between interpolation and extrapolation are comparable to the tabulated values in Table \ref{tab:Lebesque_constants}. Also, it can be noted that as long as SBM-i is used, there is a minor difference between Aubin and Nitsche in convergence and conditioning. Regarding the variances, SBM-e generally provides the largest compared to the two interpolation approaches. This is true for both convergence and conditioning. Ultimately, the results emphasize how both the Aubin and Nitsche formulations favor using interpolation over extrapolation of the basis function. The authors note that the presented numerical analysis is based on one single mesh; thus, it is not general proof. However, we have not seen any contradictory results in the research for this paper.

\begin{figure}[t]
    \centering
    \includegraphics{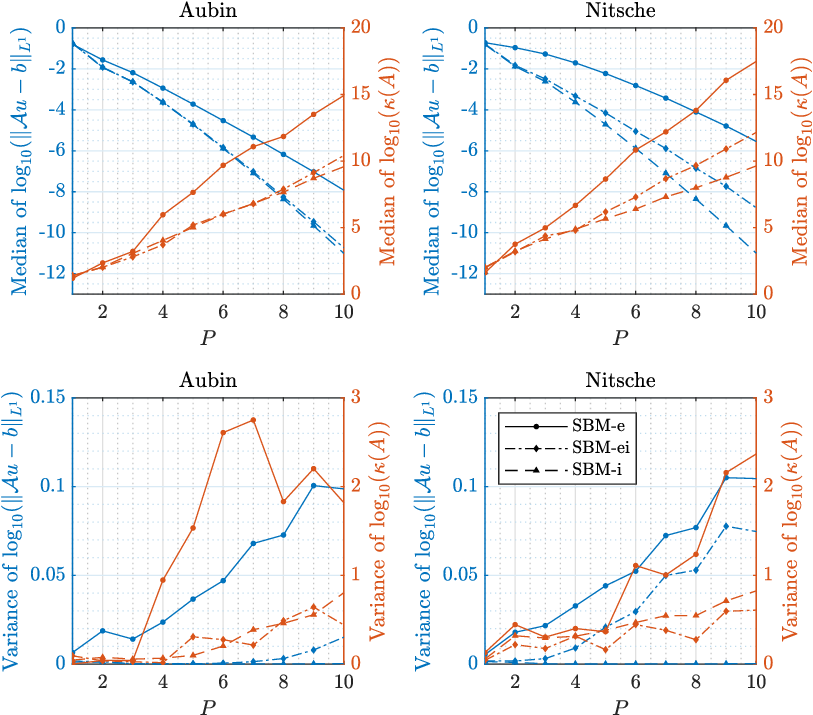}
    \caption{Median (top row) and variance (bottom row) of 100 discrete solutions with the setup as in Figure \ref{fig:Overall_assessment_mesh} for the Dirichlet problem. Quantities are for $\|\mathcal{A}u - b \|_{L^1}$ and $\kappa(\mathcal{A})$ for Aubin and Nitsche formulation with SBM-e, SBM-ei, and SBM-i.}
    \label{fig:3_DBC}
\end{figure}

\subsection{Neumann boundary conditions} 

Next, the pure Neumann problem is considered. For conciseness, we only present one numerical study, namely $p$-convergence on an aligned mesh with $l_c = 1/16$ using SBM-e and SBM-i on two boundary formulations: 
\begin{itemize}
    \item Formulation 1: \textit{Standard form.}
    \begin{equation*}
        a \langle u,v \rangle_{\overline{\Gamma}_N} = 0, \quad \text{and} \quad b \langle v \rangle_{\overline{\Gamma}_N} = (\overline{q}_N,v)_{\overline{\Gamma}_N}
    \end{equation*}
    
     \item Formulation 2: \textit{Standard form with symmetrical term.}
     \begin{equation*}
          a \langle u,v \rangle_{\overline{\Gamma}_N} = - \gamma^{-1} (\boldsymbol{\nabla} u \cdot \boldsymbol{n},\boldsymbol{\nabla} v \cdot \boldsymbol{n})_{\overline{\Gamma}_N}, \quad \text{and} \quad b \langle v \rangle_{\overline{\Gamma}_N} = (\overline{q}_N,v)_{\overline{\Gamma}_N} - \gamma^{-1} (\overline{q}_{N},\boldsymbol{\nabla} v \cdot \boldsymbol{n})_{\overline{\Gamma}_N}.
     \end{equation*}
\end{itemize}
The reasoning behind considering these two formulations is to highlight the behavior of the upcoming Robin Aubin form (Formulation 1) and Robin Nitsche form (Formulation 2) in the Neumann limit as $\varepsilon^{-1} \xrightarrow{} 0$. The results are shown in Figure \ref{fig:4_NBC}, where it can be observed that all combinations yield spectral convergence on the mesh. It can be noted that the convergence rate is ruined for Formulation 2 using SBM-e, which is due to ill-conditioning. Moreover, a decrease in convergence rate is seen when using SBM-e over SBM-i. Also, the condition number is increased when: i) using Formulation 2 over Formulation 1 and ii) using SBM-e over SBM-i. 

For conciseness, further results for the Neumann problem have been intentionally left out. We stress that a sub-optimal $\mathcal{O}(h_{\max}^{P})$ convergence rate is obtained for all SBM-e, SBM-ei, and SBM-i approaches. For CBM, optimal rate of $\mathcal{O}(h_{\max}^{P+1})$ is reached. This sub-optimality is expected -- as already discussed in Section \ref{sec:poly_cor_neu} -- due to the polynomial correction. Some embedded formulations that allow to recover the optimal rates are proposed, e.g., in \cite{Lozinski2019,Nouveau2019,Duprez2023}. Also, a general increase in conditioning is seen compared to the Dirichlet problem. The explanation hereof is due to gradients in the basis functions as required in the Neumann formulations. The SEM basis functions are defined inside the elements and grow rapidly and unbounded outside the element; hence, the gradients will enhance this polynomial behavior even more outside the element. Ultimately, this will imply an increase in the conditioning. In general, SBM-e provides larger $\kappa(\mathcal{A})$ (multiple orders of magnitude) than SBM-ei and SBM-i. 

\begin{figure}[t]
    \centering
    \includegraphics{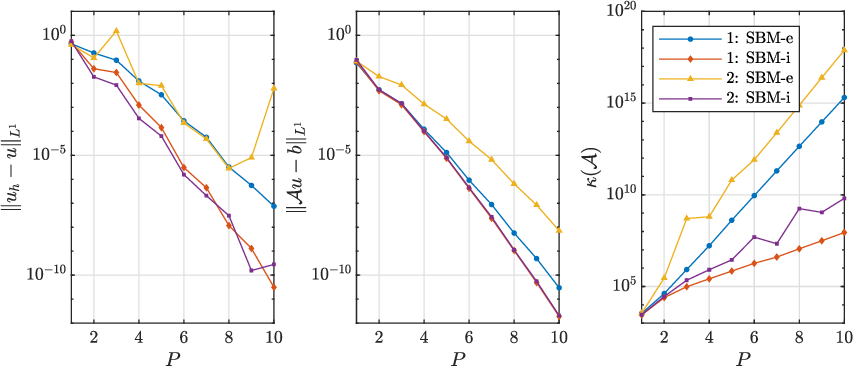}
    \caption{$p$-convergence and associated condition number study of the Neumann problem on a mesh with $l_c = 1/16$. The analysis is of two boundary formulations, Formulation 1 and Formulation 2, for SBM-e and SBM-i.}
    \label{fig:4_NBC}
\end{figure}

\subsection{Robin boundary conditions}

The last problem to consider is the pure Robin formulation. For this, we set $\gamma^{-1}$ as with the Dirichlet problem. 

\subsubsection{Consistent versus inconsistent formulations}

As outlined in Section \ref{sec:cons_robin}, the \textit{off-the-selves} Aubin and Nitsche formulations for embedded Robin boundary conditions are inconsistent if the standard polynomial corrections are applied. Therefore, we proposed a consistent way of formulating both variational statements. Ultimately, we are left with the following inconsistent and consistent equations on the boundary derived from the Euler-Lagrange equations as
\begin{align*}
    \text{Inconsistent:}& \quad u(\boldsymbol{x})-u_{RD} + \varepsilon (\overline{\boldsymbol{n}} \cdot \boldsymbol{n})
(\nabla u(\boldsymbol{x}) \cdot \boldsymbol{n} - q_{RN}) = 0. \\
    \text{Consistent:}& \quad u(\boldsymbol{x})-u_{RD} + ~~~~~~~~~~~~ \varepsilon 
(\nabla u(\boldsymbol{x}) \cdot \boldsymbol{n} - q_{RN}) = 0.
\end{align*}
Recall that $\boldsymbol{x} = \mathcal{M}(\overline{\boldsymbol x})$. Now, $u_{RD}$ and $q_{RN}$ are constructed from the analytical MMS solution. Therefore, we will not be able to locate the consistency issue as the error is reduced for $(\nabla u(\boldsymbol{x}) \cdot \boldsymbol{n} - q_{RN})$. To circumvent this, we perturb the Robin boundary condition for \eqref{eq:full_robin} with an arbitrary scalar value, $\delta$, as
\begin{equation}
    u + \varepsilon ( \boldsymbol{\nabla} u \cdot \boldsymbol{n})  =  u_{RD} - \delta + \varepsilon (q_{RN} + \delta/\varepsilon) ,  \quad \text{on} \quad \Gamma_R.
\end{equation}
With this, the consistent and inconsistent boundary equations give
\begin{align*}
    \text{Inconsistent:}& \quad u(\boldsymbol{x})-u_{RD} + \varepsilon (\overline{\boldsymbol{n}} \cdot \boldsymbol{n})
(\nabla u(\boldsymbol{x}) \cdot \boldsymbol{n} - q_{RN})  =  \delta( \overline{\boldsymbol{n}} \cdot \boldsymbol{n} - 1). \\
    \text{Consistent:}& \quad u(\boldsymbol{x})-u_{RD} + ~~~~~~~~~~~~ \varepsilon 
(\nabla u(\boldsymbol{x}) \cdot \boldsymbol{n} - q_{RN})  = \delta - \delta = 0.
\end{align*}
From this, we see that the inconsistency error -- in a weak sense -- becomes $\delta(\overline{\boldsymbol{n}} \cdot \boldsymbol{n} - 1)$, and that the consistent formulation still satisfies the $\delta$ perturbation as expected.

In Figure \ref{fig:5_RBC_consistency}, a $p$-convergence study is carried for $P = \{1,...,10\}$ on four different meshes, $l_c = \{1/4, 1/8, 1/16, 1/32 \}$. Here, both the consistent and inconsistent formulations for Aubin and Nitsche are used with $\varepsilon = \delta = 1$. Ultimately, the results show how the truncation error stalls at certain levels for both of the inconsistent formulations. These levels are of first order in relation to the mesh size, i.e., related to $(\overline{\boldsymbol{n}} \cdot \boldsymbol{n})$ that approach unity for $l_c \xrightarrow{} 0$, as expected. Both of the consistent formulations can be seen to converge properly with exponential rate to machine precision. Now that the inconsistency is located we opt to solely use the consistent formulations for the remaining part of the paper.

\begin{figure}[t]
    \centering
    \includegraphics{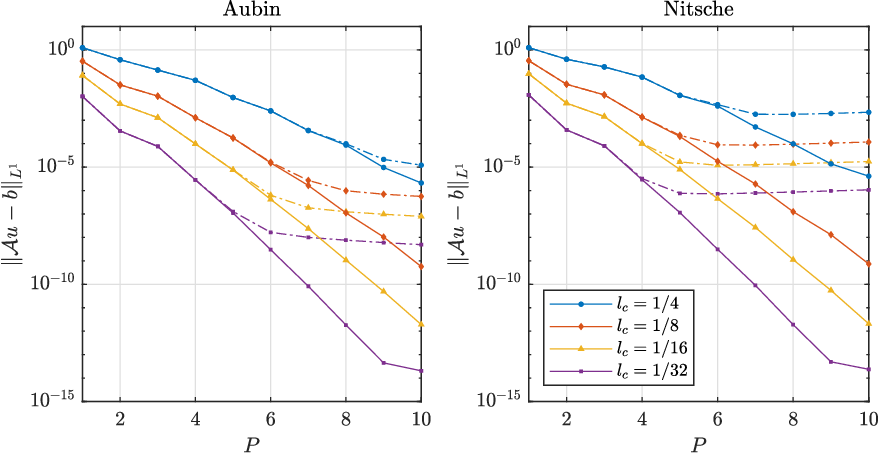}
    \caption{Consistent formulation (full lines) versus inconsistent formulation (dotted-dashed lines). For both Aubin (left) and Nitsche (right) on four different meshes, $l_c = \{1/4, 1/8, 1/16, 1/32 \}$, with $p$-convergence on each. Results using SBM-i on aligned meshes with $\varepsilon = \delta = 1$.}
    \label{fig:5_RBC_consistency}
\end{figure}

\subsubsection{Assessment and verification on unstructured meshes}

We performed the same statistical assessment as for the pure Dirichlet problem (recall Figure \ref{fig:Overall_assessment_mesh}). For this, we use $\varepsilon = 1$, and the consistent Robin formulations. The analysis can be seen in Figure \ref{fig:7_RBC} in Appendix \ref{sec:appendix_RBC}, ultimately confirming the observations and conclusions from the Dirichlet case.

Next, a verification of the Robin implementation -- in terms of $h$-convergence studies -- is to be carried out. For this, we select three different values of $\varepsilon = \{10^{-1},10^{0},10^{1} \}$ and run the analysis for $P = \{2,6,10 \}$ for both Aubin and Nitsche. We compare the SBM-i approach to the reference CBM solution. Recall that the latter converges optimally in both the Dirichlet and Neumann limits, whereas the polynomial corrected SBM methods converge sub-optimal in the Neumann limit and optimal in the Dirichlet limit. The results can be seen in Figure \ref{fig:8_RBC}. From the figure, it can be observed how the SBM-i genuinely agrees with the CBM rates, especially in the low $\varepsilon$ case. Moreover, as $\varepsilon$ increases, the difference in convergence rates (optimal versus sub-optimal) appears. Lastly, the authors note the slight irregularities in the SBM-i results, which is due to the aforementioned mesh sensitivity of the SBM implementations.

\begin{figure}[t]
    \centering
    \includegraphics{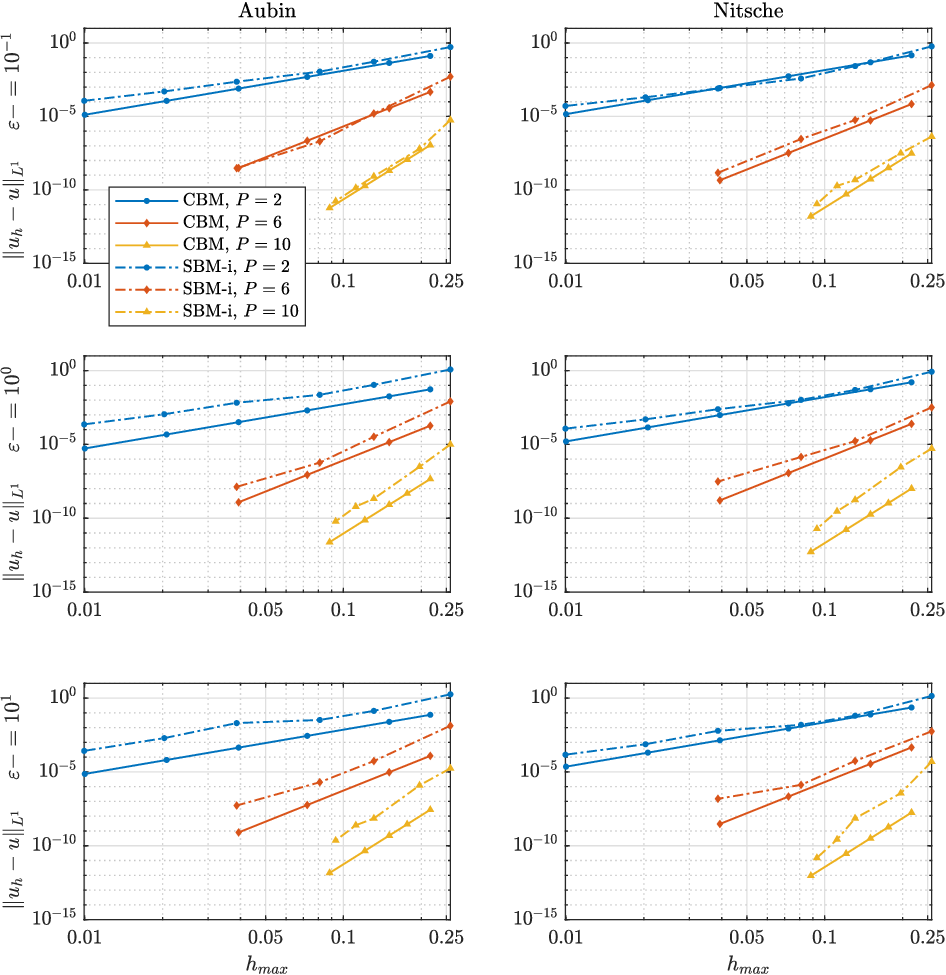}
    \caption{$h$-convergence study for the consistent Robin formulations (Aubin and Nitsche) with different $\varepsilon = \{10^{-1},10^{0},10^{1} \}$ and $P = \{2,6,10 \}$. The analysis is with CBM and SBM-i.}
    \label{fig:8_RBC}
\end{figure}

\newpage

\subsubsection{A mixed Dirichlet-Neumann problem using only Robin boundary conditions}

To demonstrate the implemented \textit{all-in-one} strategy, we perform a $p$-convergence study on the mesh layout given in Figure \ref{fig:Mesh_square_with_circle}. Here, a problem with outer Dirichlet and inner Neumann conditions is considered by setting $\varepsilon$ properly in the Robin formulation. In this case, $\varepsilon = 10^{-10}$ on the outer square and $\varepsilon = 10^{10}$ on the inner circle.

\begin{figure}[t]
    \centering
    \includegraphics{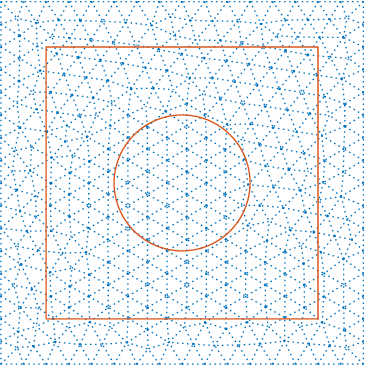}
    \caption{Mesh layout with elements of size $l_c = 1/8$ of a $(1.5 \times 1.5)$ square domain with a circular hole of radius $R = 0.375$ at the center $(x_0,y_0) = (1,1)$ is considered. Background mesh domain is of lengths $L_x = L_y = 2$. Robin boundary conditions with $\varepsilon = 10^{-10}$ (Dirichlet mimic) on square and $\varepsilon = 10^{10}$ (Neumann mimic) on circle.}
    \label{fig:Mesh_square_with_circle}
\end{figure}

In Figure \ref{fig:9_RBC_square_with_circle}, we perform the usual $p$-convergence analysis on three different meshes. Here, spectral convergent behavior for all computations is observed. Moreover, q ruined convergence rate for very high orders using SBM-e -- due to ill-conditioning -- is seen as well. The SBM-i provides similar-sized errors independent of choosing Aubin or Nitsche. 

\begin{figure}[b]
    \centering
    \includegraphics{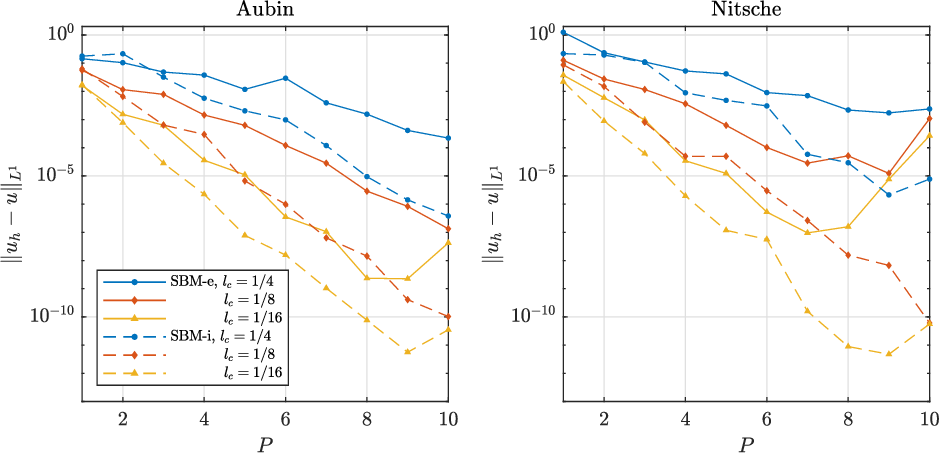}
    \caption{$p$-convergence study on mesh layout presented in Figure \ref{fig:Mesh_square_with_circle} with $l_c = \{1/4, 1/8, 1/16 \}$. For both Aubin and Nitsche with SBM-e and SBM-i.}
    \label{fig:9_RBC_square_with_circle}
\end{figure}

\subsubsection{Confirmation of asymptotic preserving properties}

As a last numerical experiment, we seek to confirm the asymptotic preserving properties of the implementation. Recall the solution perturbations in Proposition 2.1 and Proposition 2.2 for the full Robin solution, $u$. For completeness, in the Dirichlet limit, we have the following expansion (only including up to second-order terms) as
\begin{equation*}
    u = u_0 + \varepsilon u_1 + \varepsilon^2 u_2 + \mathcal{O}(\varepsilon^3).
\end{equation*}

In this perturbed setting, we solve the full Robin problem for $u$ (for some arbitrary $\varepsilon$) and the zeroth order Dirichlet problem for $u_0$ on a mesh given some approximation order, $P$. Then, $u_1$ and $u_2$ are solved recursively as given in Proposition 2.1. From these solutions, we compute $\|u-u_0 \|_{L^1}$, $\|u-u_0-\varepsilon u_1 \|_{L^1}$, and $\|u-u_0 - \varepsilon u_1 - \varepsilon^2 u_2 \|_{L^1}$ for different values of $\varepsilon$. The results can be seen in Figure \ref{fig:6_Aubin_RBC_AP}, for $P = \{1,...,10 \}$ on a mesh with $l_c = 1/8$. The convergence studies are supported by contour plots of $u-u_0$ versus $\varepsilon u_1$ and $u - u_0 - \varepsilon u_1$ versus $\varepsilon^2 u_2$ for different values of $\varepsilon$: $\varepsilon = 10^{-2}$ in Figure \ref{fig:6_Aubin_RBC_AP_Contour_P_10_eps_1e-2}, $\varepsilon = 10^{-4}$ in Figure \ref{fig:6_Aubin_RBC_AP_Contour_P_10_eps_1e-4}, and $\varepsilon = 10^{-6}$ in Figure \ref{fig:6_Aubin_RBC_AP_Contour_P_10_eps_1e-6}. For all contour plots, $P = 10$, where the solution is interpolated onto the true circle for the purpose of visualization.

\begin{figure}[H]
    \centering
    \includegraphics{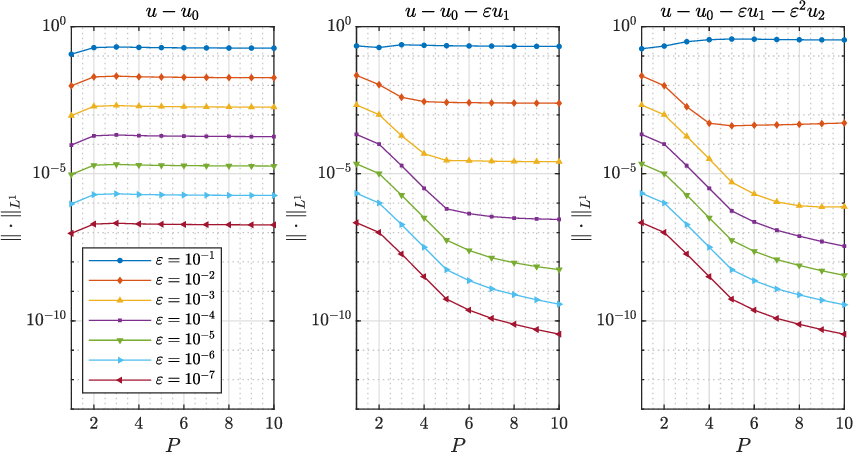}
    \caption{$p$-convergence on a mesh with $l_c = 1/8$ and variable $\varepsilon$ in $\|u-u_0 \|_{L^1}$, $\|u-u_0-\varepsilon u_1 \|_{L^1}$, and $\|u-u_0 - \varepsilon u_1 - \varepsilon^2 u_2 \|_{L^1}$.}
    \label{fig:6_Aubin_RBC_AP}
\end{figure}

\begin{figure}[H]
    \centering
    \includegraphics[scale = 0.6]{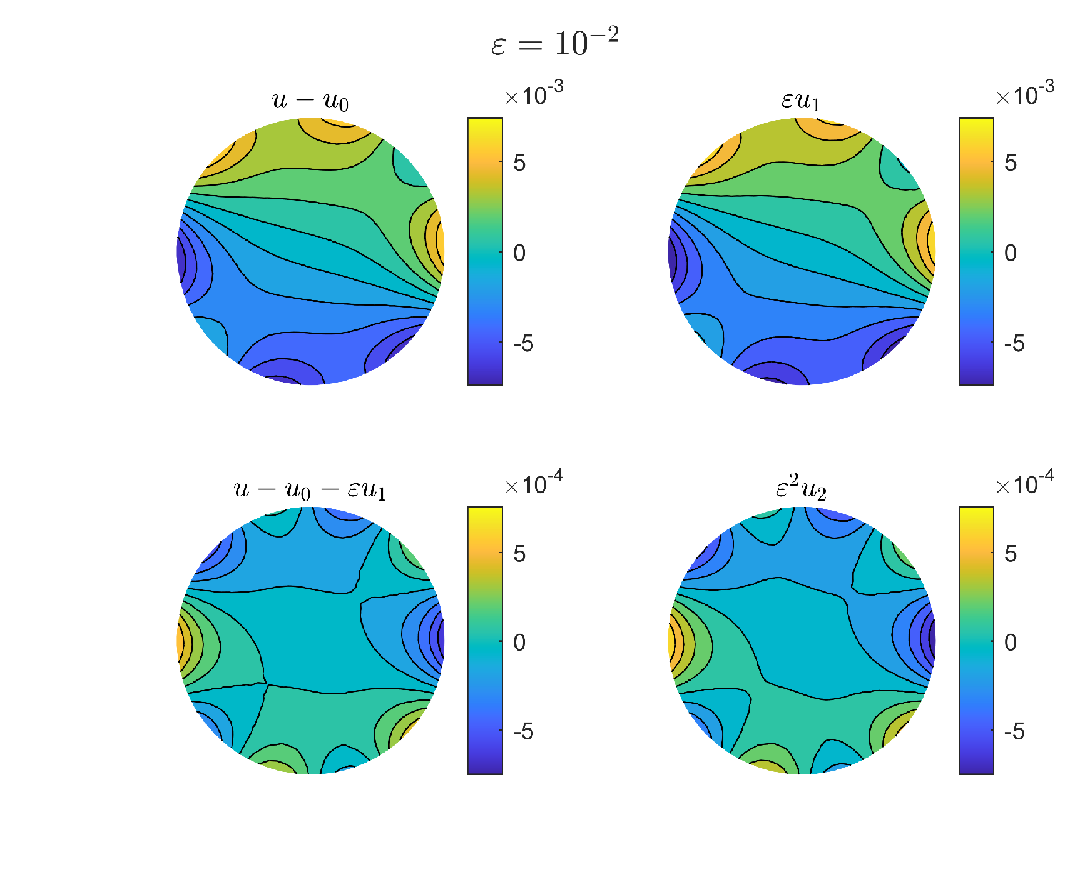}
    \caption{Contour plot of  $u-u_0$ versus $\varepsilon u_1$ and $u - u_0 - \varepsilon u_1$ versus $\varepsilon^2 u_2$ for $\varepsilon = 10^{-2}$ on $(l_c,P) = (1/8,10)$.}
    \label{fig:6_Aubin_RBC_AP_Contour_P_10_eps_1e-2}
\end{figure}

From Figure \ref{fig:6_Aubin_RBC_AP}, we see how the zeroth order error, $\|u-u_0 \|_{L^1}$, converges linearly with $\varepsilon$. Moreover, $\|u-u_0-\varepsilon u_1 \|_{L^1}$ and $\|u-u_0 - \varepsilon u_1 - \varepsilon^2 u_2 \|_{L^1}$, can be observed to converge as expected second and third order with $\varepsilon$, respectively. That is until lower order errors start to dominate, e.g., see for $P = 10$ in the figures. The convergence rate attends spectral behavior until this point. From the contour plots, going through Figure \ref{fig:6_Aubin_RBC_AP_Contour_P_10_eps_1e-2} to \ref{fig:6_Aubin_RBC_AP_Contour_P_10_eps_1e-6}, it is evident how the minimum and maximum values -- in general -- are attained at the boundary of the domain, whereas it is zero in the center. From the figures, $u-u_0$ and $\varepsilon u_1$ look to have more and more sufficient agreement as $\varepsilon$ decreases (minor difference for $\varepsilon = 10^{-2}$ and no visible difference for $\varepsilon = 10^{-4}$ and $\varepsilon = 10^{-6}$) . Moreover, when decreasing $\varepsilon$ the difference between $u - u_0 - \varepsilon u_1$ and $\varepsilon^2 u_2$ becomes more noticeable. At $\varepsilon = 10^{-6}$, $\varepsilon^2 u_2$ is practically machine precision.

\begin{figure}[H]
    \centering
    \includegraphics[scale = 0.6]{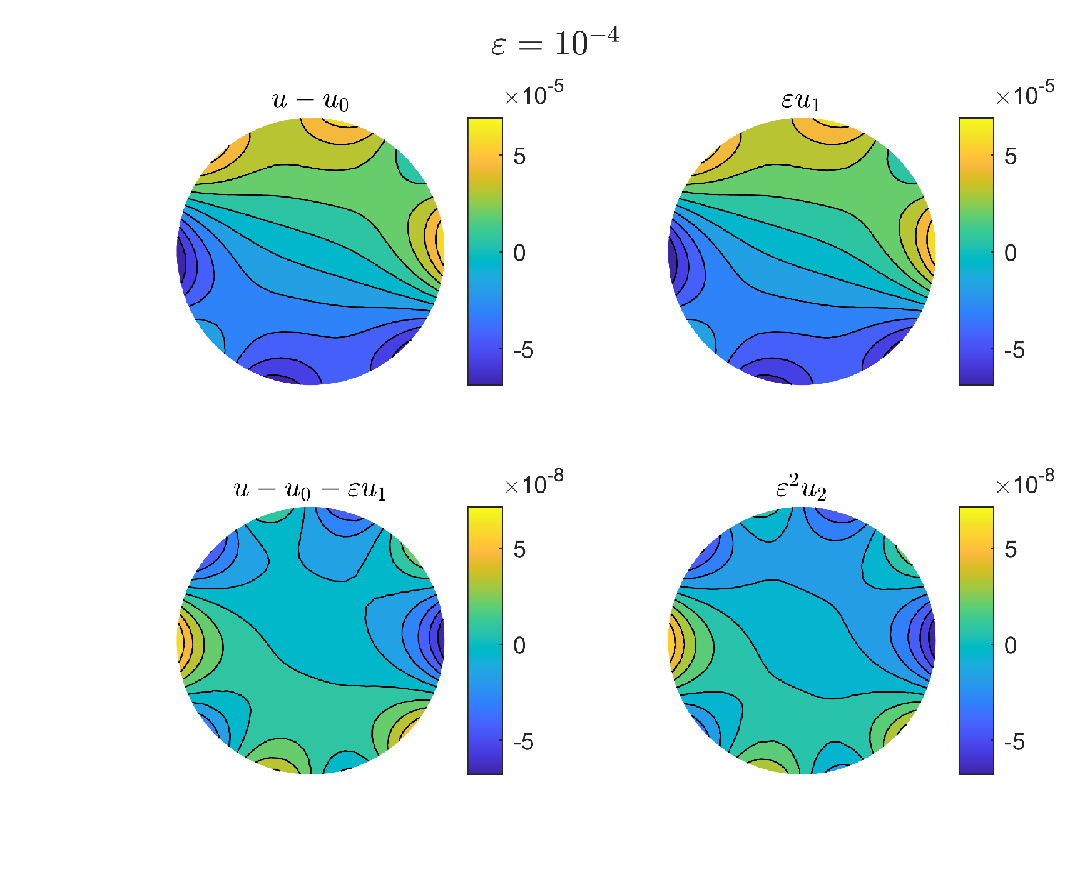}
    \caption{Contour plot of  $u-u_0$ versus $\varepsilon u_1$ and $u - u_0 - \varepsilon u_1$ versus $\varepsilon^2 u_2$ for $\varepsilon = 10^{-4}$ on $(l_c,P) = (1/8,10)$.}
    \label{fig:6_Aubin_RBC_AP_Contour_P_10_eps_1e-4}
\end{figure}

\begin{figure}[H]
    \centering
    \includegraphics[scale = 0.6]{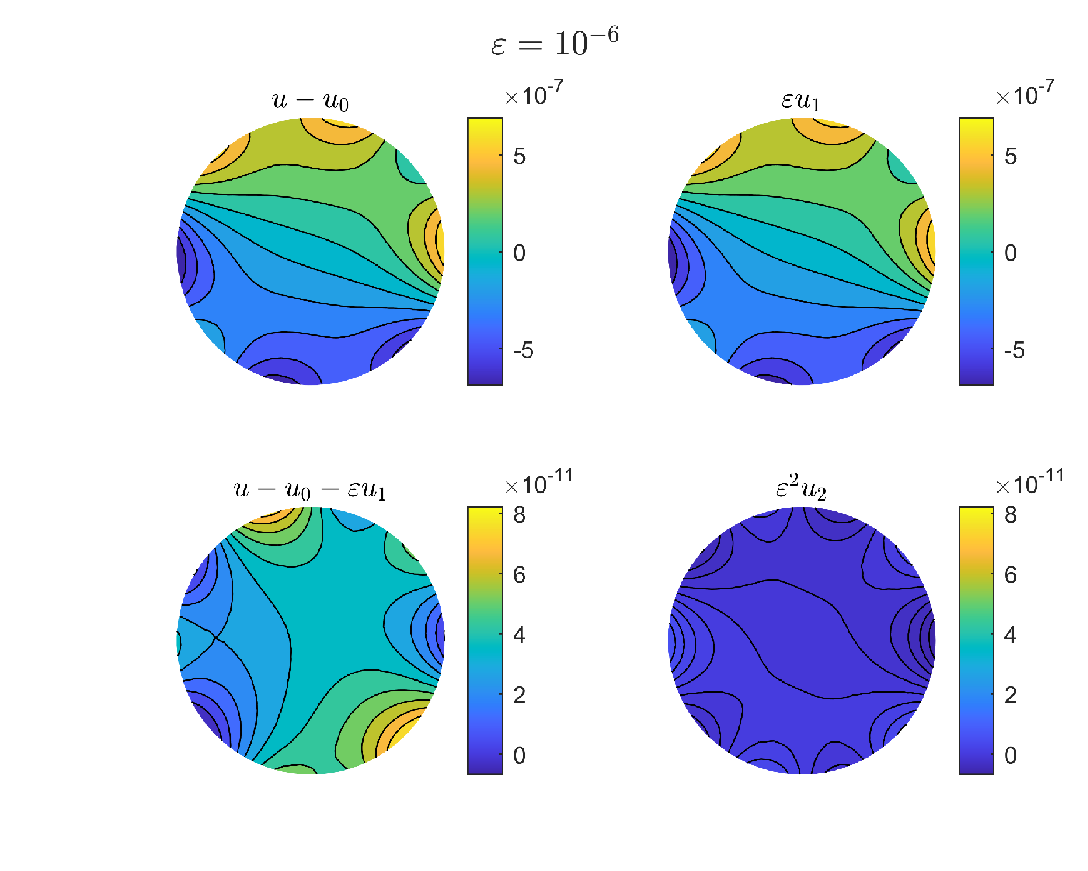}
    \caption{Contour plot of  $u-u_0$ versus $\varepsilon u_1$ and $u - u_0 - \varepsilon u_1$ versus $\varepsilon^2 u_2$ for $\varepsilon = 10^{-6}$ on $(l_c,P) = (1/8,10)$.}
    \label{fig:6_Aubin_RBC_AP_Contour_P_10_eps_1e-6}
\end{figure}

\newpage
 \section{Conclusion}\label{sec:conclusion}

In the paper, we have presented a new high-order spectral element-based numerical model for solving the two-dimensional Poisson problem on an unfitted/immersed/embedded domain. The model utilizes the general idea from the shifted boundary method; however, the cumbersome evaluation of high-order Taylor series expansions is avoided by exploring a polynomial correction. Boundary conditions -- Dirichlet, Neumann, and Robin -- are enforced weakly in both conformal and unfitted settings. In the latter, we have presented a consistent asymptotic preserving Robin formulation for all values of the Robin parameter, $\varepsilon$. Moreover, we have studied the influence of how to construct the surrogate domain. Ultimately showing that interpolating approaches improve convergence rates and decrease conditioning numbers significantly. Various variational formulations -- Aubin and Nitsche type -- have been implemented concluding that as long as the basis functions are being interpolated, the differences between the two are minor.

\section*{Acknowledgments} 
This work contributes to the activities in the PhD research project "New advanced simulation techniques for wave energy converters" hosted at the Department of Applied Mathematics and Computer Science, Technical University of Denmark, Denmark. Work started during a visit for JV at INRIA at the University of Bordeaux. 
 
\appendix

\newpage

\section{Verification on aligned meshes for Dirichlet problems}\label{sec:appendix_DBC}

\subsection{Aubin's method}

\begin{figure}[H]
    \centering
    \includegraphics{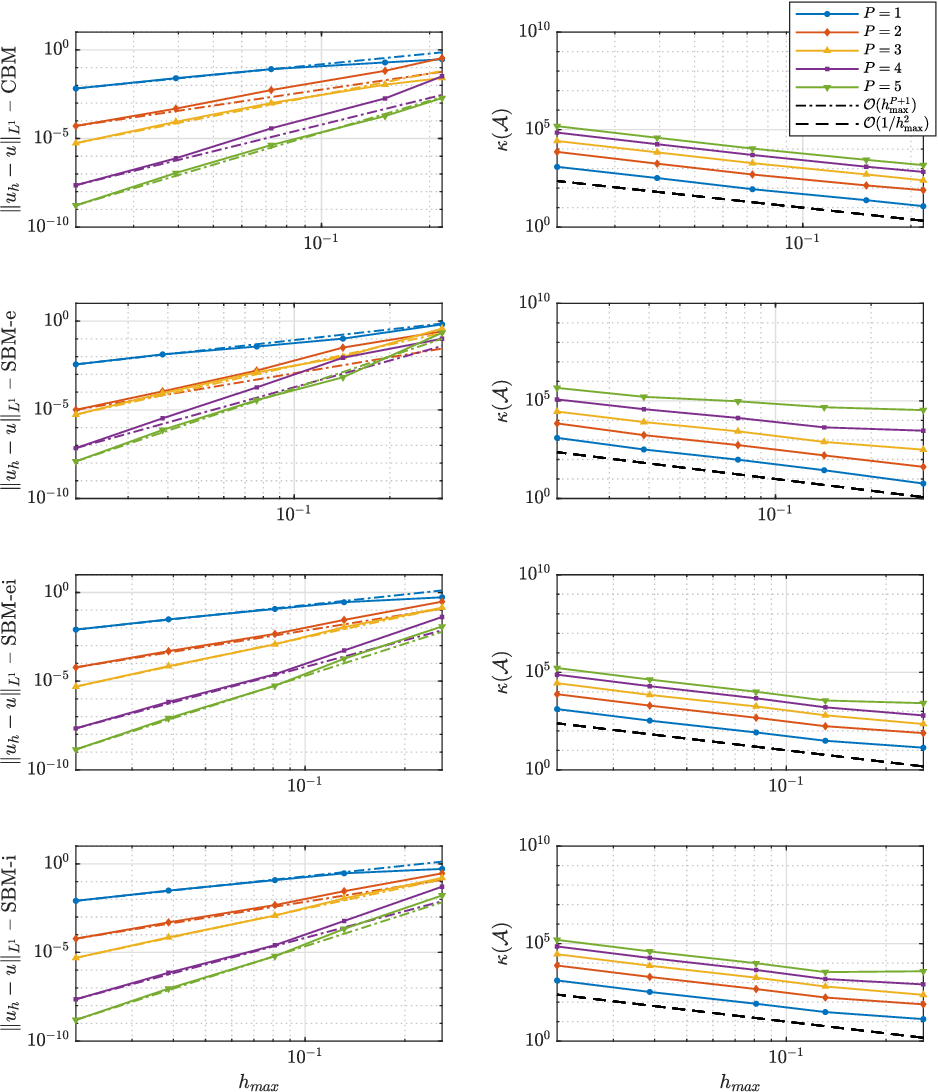}
    \caption{Verification of the Dirichlet Aubin formulation. From the top row and down: CBM, SBM-e, SBM-ei, and SBM-i. Left column is $h$-convergence in $\|u_h - u \|_{L^1}$ and right column is the associated  matrix conditioning, $\kappa(\mathcal{A})$. Analysis is for $P = \{1,...,5\}$ on aligned meshes.}
    \label{fig:1_Aubin_DBC}
\end{figure}

\subsection{Nitsche's method}

\begin{figure}[H]
    \centering
    \includegraphics{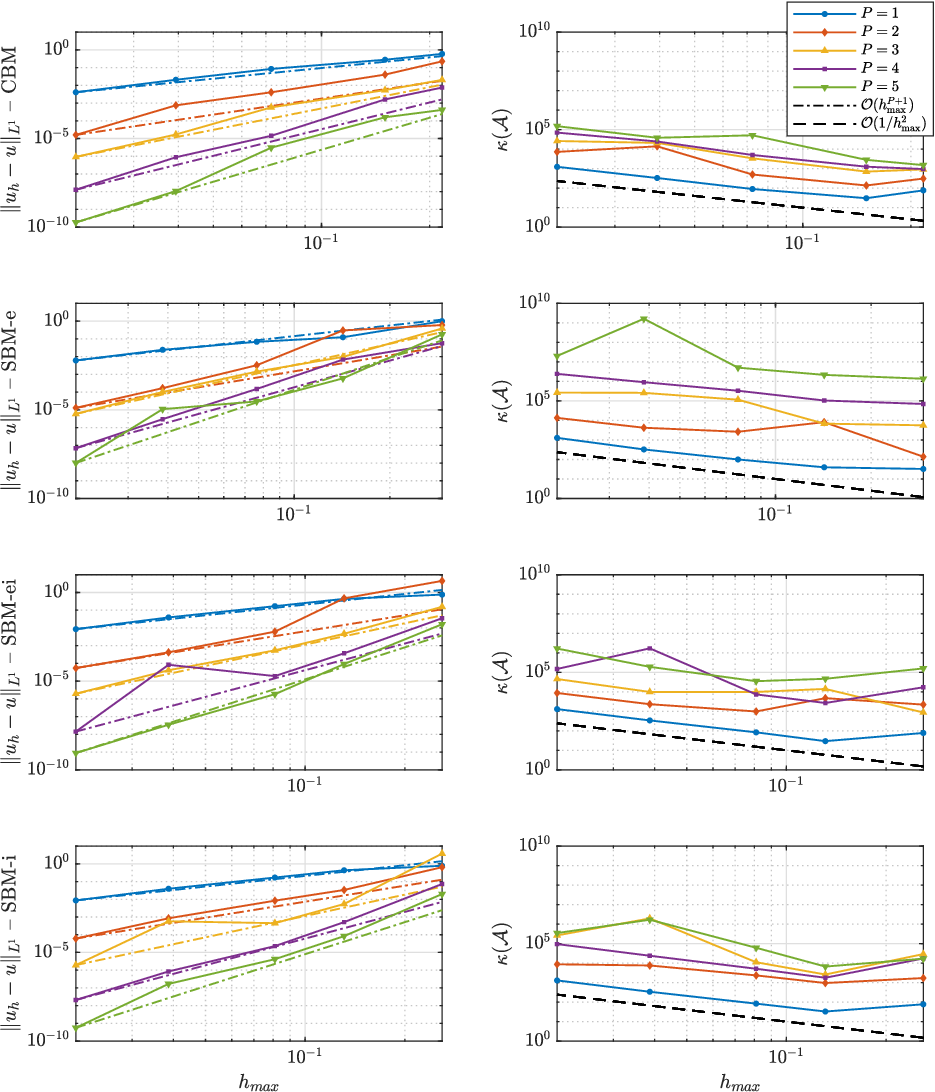}
    \caption{Verification of the Dirichlet Nitsche formulation. From the top row and down: CBM, SBM-e, SBM-ei, and SBM-i. Left column is $h$-convergence in $\|u_h - u \|_{L^1}$ and right column is the associated  matrix conditioning, $\kappa(\mathcal{A})$. Analysis is for $P = \{1,...,5\}$ on well-behaved meshes.}
    \label{fig:1_Nitsche_DBC}
\end{figure}

\section{Assessment on unstructured meshes for Robin boundary conditions}\label{sec:appendix_RBC}

\begin{figure}[H]
    \centering
    \includegraphics{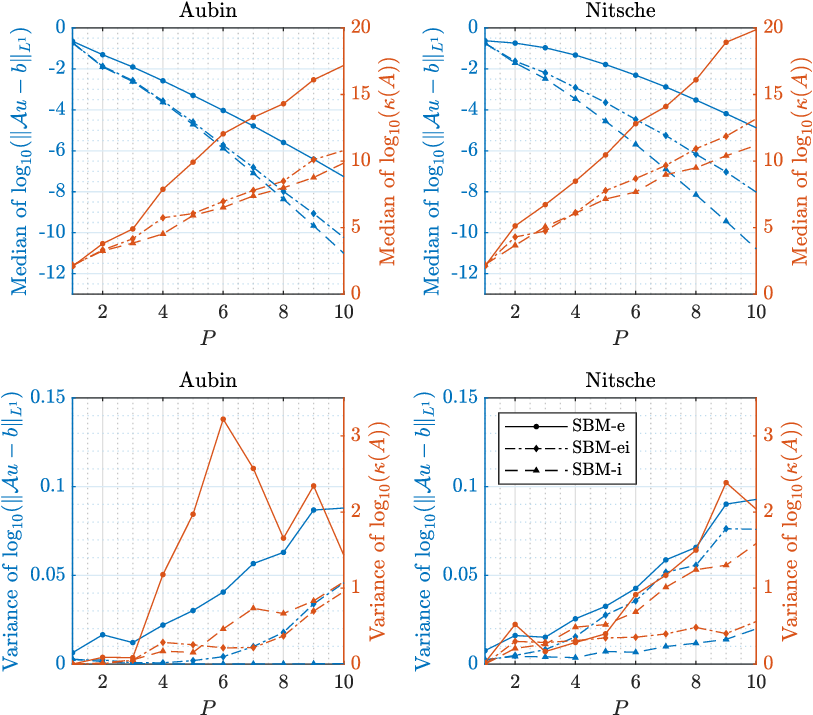}
    \caption{Median (top row) and variance (bottom row) of 100 discrete solutions with the setup as in Figure \ref{fig:Overall_assessment_mesh} for the consistent Robin problem with $\epsilon = 1$. Quantities are for $\|\mathcal{A}u - b \|_{L^1}$ and $\kappa(\mathcal{A})$ for Aubin and Nitsche formulation with SBM-e, SBM-ei, and SBM-i.}
    \label{fig:7_RBC}
\end{figure}

\end{document}